\documentclass[11pt]{article}
\usepackage[margin=1in]{geometry}
\usepackage[numbers,square,sort]{natbib}

\usepackage{graphicx}
\usepackage{multirow}
\usepackage{amsmath,amssymb,amsfonts, amsthm}
\usepackage{mathrsfs}
\usepackage{xcolor}
\usepackage{textcomp}
\usepackage{tabularx}
\usepackage{booktabs}
\usepackage{algorithm}
\usepackage{algorithmicx}
\usepackage{algpseudocode}
\usepackage{listings}
\usepackage{mathtools}
\usepackage{enumitem}
\usepackage{bm}
\usepackage[labelsep=period]{caption}
\usepackage{subcaption}  
\usepackage{nicefrac}
\usepackage{xfrac}
\usepackage{threeparttable}
\usepackage{acro}
\usepackage{authblk}
\usepackage{hyperref}
\usepackage{xurl}
\usepackage{cleveref}
\usepackage[T1]{fontenc}

\DeclareAcronym{cco}{
  short = CCO ,
  long  = Chance-Constrained Optimization ,
  short-plural = s ,
}
\DeclareAcronym{saa}{
  short = SAA ,
  long  = Sample Average Approximation ,
  short-plural = s ,
}
\DeclareAcronym{sa}{
  short = SA ,
  long  = Scenario Approach ,
  short-plural = s ,
}
\DeclareAcronym{ldp}{
  short = LDP ,
  long  = Large Deviation Principle ,
  short-plural = s ,
}
\DeclareAcronym{iid}{
  short = i.i.d. ,
  long  = independent identically distributed ,
  short-plural = s ,
}
\DeclareAcronym{is}{
  short = IS ,
  long  = Importance Sampling ,
  short-plural = s ,
}

\newtheorem{theorem}{Theorem}[section]
\newtheorem{proposition}[theorem]{Proposition}
\newtheorem{lemma}[theorem]{Lemma}
\newtheorem{corollary}[theorem]{Corollary}

\theoremstyle{definition}
\newtheorem{definition}[theorem]{Definition}
\newtheorem{example}[theorem]{Example}
\newtheorem{assumption}[theorem]{Assumption}

\theoremstyle{remark}
\newtheorem{remark}[theorem]{Remark}

\newcommand{\vzero}{\bm{0}}
\newcommand{\vone}{\bm{1}}
\newcommand{\va}{\bm{a}}
\newcommand{\vb}{\bm{b}}
\newcommand{\vc}{\bm{c}}
\newcommand{\vd}{\bm{d}}

\newcommand{\vw}{\bm{w}}
\newcommand{\vx}{\bm{x}}
\newcommand{\vy}{\bm{y}}
\newcommand{\vz}{\bm{z}}
\newcommand{\vxi}{\bm{\xi}}
\newcommand{\vmu}{\bm{\mu}}
\newcommand{\vnu}{\bm{\nu}}
\newcommand{\vtheta}{\bm{\theta}}
\newcommand{\supportset}{\Xi} 
\newcommand{\minimize}{\mathop{\text{minimize}}\limits}
\newcommand{\maximize}{\mathop{\text{maximize}}\limits}
\newcommand{\RV}{\mathscr{RV}}
\newcommand{\MRV}{\mathscr{MRV}}

\title{Decision-Scaled Scenario Approach for Rare Chance-Constrained Optimization\thanks{Jaeseok Choi and Anirudh Subramanyam acknowledge support by the U.S. National Science Foundation under Grant DMS-2229408. Constantino Lagoa acknowledges support by the U.S. National Science Foundation under Grant ECCS-2317272.}}

\author[1]{Jaeseok Choi}
\author[2]{Anand Deo}
\author[3]{Constantino Lagoa}
\author[1]{Anirudh Subramanyam}

\affil[1]{Department of Industrial and Manufacturing Engineering, Pennsylvania State University \authorcr
\texttt{\{jxc6747, subramanyam\}@psu.edu}}
\affil[2]{Decision Sciences, Indian Institute of Management \authorcr
\texttt{anand.deo@iimb.ac.in}}
\affil[3]{School of Electrical Engineering and Computer Science, Pennsylvania State University \authorcr
\texttt{cml18@psu.edu}}

\date{}

\begin{document}

\maketitle

\begin{abstract}
Chance-constrained optimization is a suitable modeling framework for safety-critical applications where violating constraints is nearly unacceptable. 
The scenario approach is a popular solution method for these problems, 
due to its straightforward implementation and ability to preserve problem structure.
However, in the rare-event regime where constraint violations must be kept extremely unlikely, the scenario approach becomes computationally infeasible due to the excessively large sample sizes it demands.
We address this limitation with a new yet straightforward decision-scaling method that relies exclusively on original data samples and a single scalar hyperparameter that scales the constraints in a way amenable to standard solvers.
Our method leverages large deviation principles 
under mild nonparametric assumptions satisfied by 
commonly used distribution families in practice.
For a broad class of problems satisfying certain practically verifiable structural assumptions, the method achieves a polynomial reduction in sample size requirements compared to the classical scenario approach, while also guaranteeing asymptotic feasibility in the rare-event regime.
Numerical experiments spanning finance and engineering applications show that our decision-scaling method significantly expands the scope of problems that can be solved both efficiently and reliably.

\vspace{1em}
\noindent\textbf{Keywords:} chance-constrained optimization, scenario approach, rare events, large deviation principle \\
\noindent\textbf{Mathematics Subject Classification:} 90C15, 60F10, 65C05
\end{abstract}

\section{Introduction}\label{intro}
Rare but catastrophic events--such as the COVID-19 pandemic and the 2025 Iberian Peninsula blackout--highlight the critical need for decision frameworks that balance safety with efficiency in engineering and risk management~\citep{ivanov2020viability,bajo2025iberian}.
\ac{cco}, introduced by \citet{charnes1958cost}, provides a structured framework for incorporating safety requirements into decision-making. 

We consider the following general \ac{cco} formulation:
\begin{equation}
\label{eqn:cc_basic}
\tag{$\text{CCP}_{\varepsilon}$}
\begin{aligned}
\minimize_{\vx\in \mathcal{X}} &\quad c(\vx)\\
\text{subject to} &\quad \mathbb{P}_{\vxi}\left(\{\vz: g(\vx,\vz) \leq 0\}\right) \geq 1-\varepsilon.
\end{aligned}
\end{equation}
Here, $\vx \in \mathbb{R}^n$ is the decision vector restricted to a deterministic \textit{closed convex} set $\mathcal X \subseteq \mathbb{R}^n$,
$c:\mathbb{R}^n \to \mathbb{R}$ is a \textit{convex} cost function, and $\varepsilon \in (0,1)$ is a prescribed risk tolerance. 
The uncertainty is captured by a random vector
$\vxi\colon\Omega \to \supportset\subseteq\mathbb{R}^m$ defined on a probability space $(\Omega,\mathcal{F},\mathbb{P})$.
Its probability distribution, denoted by $\mathbb{P}_{\vxi}$, is supported on the set $\supportset$\footnote{The support $\supportset$ is the smallest closed set with $\mathbb{P}_{\vxi}(\supportset)=1$.};
we emphasize that our method does not require knowledge of the full distributional form, only certain properties about its tail behavior which we formalize later.
The constraint function $g\colon\mathcal{X}\times \supportset \to \mathbb{R}$ encodes 
performance requirements, where 
$g(\cdot, \vz)$ is \textit{closed} and \textit{convex} for any fixed $\vz$ and $g(\vx, \cdot)$ is Borel measurable and \emph{continuous} for any fixed $\vx$.
The formulation accommodates joint chance constraints of the form
$\mathbb{P}_{\vxi}\left(\{\vz: g_1(\vx,\vz) \leq 0, \; g_2(\vx,\vz) \leq 0, \ldots, g_K(\vx,\vz) \leq 0\} \right) \geq 1-\varepsilon$
by defining
$g(\vx, \vz) = \max_{i=1,\ldots,K}g_i(\vx,\vz)$.
The objective is to
minimize $c(\vx)$ while ensuring %
that the decision $\vx$ satisfies the performance requirement $g(\vx,\vz) \leq 0$ with probability
at least $1-\varepsilon$; equivalently, the probability of constraint violation must not exceed $\varepsilon$.

This paper specifically focuses on the rare-event regime where $\varepsilon$ is exceptionally small, typically below $10^{-3}$.
Such stringent reliability requirements arise naturally in many safety-critical applications. For example, structural engineering demands failure probabilities below
$10^{-4}$ for critical infrastructures~\citep{duckett2005risk};
power grids target maximum loss-of-load probabilities of roughly $10^{-4}$~\citep{milligan2011methods};
and financial institutions maintain default rates below $10^{-3}$~\citep{zimper2014minimal}.

\acp{cco} have been successfully applied across diverse domains, including power systems~\citep{bienstock2014chance}, emergency response design~\citep{beraldi2004designing}, portfolio optimization~\citep{bonami2009exact},
and humanitarian logistics~\citep{elcci2018chance}.
Despite their broad applicability, solving these problems remains computationally intractable in many practical settings.
The fundamental difficulty stems from the chance constraint itself: evaluating
if a candidate solution $\vx$ satisfies $\mathbb{P}_{\vxi}\left(\{\vz: g(\vx,\vz) \leq 0\}\right) \geq 1-\varepsilon$ requires computing
a multi-dimensional integral over $\mathbb{P}_{\vxi}$, which admits closed-form solutions only for highly specialized combinations of constraint
functions and probability distributions~\citep{lagoa2005probabilistically, ahmed2008solving}.
Moreover, even when $g(\cdot,\vz)$ is convex for each $\vz$, the feasible
region defined by the chance constraint is typically non-convex, precluding the use of modern convex optimization algorithms~\citep{nemirovski2007convex}.
Indeed, the general \ac{cco} problem is known to be
NP-hard~\citep{luedtke2010integer}.

Given these complexities, much of the existing research has focused on sample-based approximation methods with two primary approaches: \ac{saa} and \ac{sa}.
The former approximates the chance constraint in~\eqref{eqn:cc_basic} by its empirical counterpart.
While \ac{saa} solutions converge to the true solution as sample size increases~\citep{luedtke2008sample,pagnoncelli2009sample}, the method faces significant practical limitations: the resulting problem remains non-convex, and the indicator functions encoding constraint violations often necessitate mixed-integer reformulations~\citep{luedtke2010integer}, substantially increasing computational burden. Although \cite{geletu2017inner,pena2020solving} have proposed smoothed \ac{saa} methods using continuous approximations, convexity still cannot be guaranteed.

In contrast, the \ac{sa}, based on earlier results on statistical learning theory (e.g., see~\citet{Vidyasagar2003,tempo2005randomized}) and later adapted for convex optimization problems by \citet{calafiore2005uncertain}, offers a more tractable alternative. Rather than approximating the probability in~\eqref{eqn:cc_basic}, \ac{sa} replaces the chance constraint with a finite set of deterministic constraints,
\begin{equation}\label{eq:sp_constraints}
    g\big(\vx,\vz^{(j)}\big) \leq  0, \quad j=1,2,\ldots,N,
\end{equation}
enforced simultaneously for the $N$ samples $\vz^{(1)},\vz^{(2)},\ldots,\vz^{(N)}$ drawn from $\mathbb{P}_{\vxi}$. 
This formulation preserves convexity: when $g(\cdot,\vz)$ is convex for each $\vz$, the resulting scenario problem remains convex and thus computationally tractable. Furthermore, for a given risk tolerance level $\varepsilon$  
and sampling confidence $\beta$,
explicit bounds on the required sample size have been derived~\citep{calafiore2005uncertain, calafiore2006scenario, campi2008exact, calafiore2010random}. We formally review these bounds later; for now, we highlight that these bounds provide \emph{distribution-free} guarantees on solution feasibility that hold regardless of the underlying probability distribution.

A major drawback of the SA, however, is that the required sample size $N$ scales as $\varepsilon^{-1}$ to ensure chance constraint feasibility.
This requirement becomes computationally prohibitive for safety-critical applications where the allowed risk level $\varepsilon$ is typically smaller than $10^{-3}$.
Indeed, since each sample introduces a constraint in the optimization problem, larger sample sizes directly translate to increased computational complexity.
For instance, to guarantee a maximum risk of
$\varepsilon = 0.1\%$ with $1-\beta = 95\%$ confidence, a standard sample size bound from~\cite{campi2009scenario} requires solving an optimization problem with 7,992 constraints,
even when the problem features only a single decision variable. 
This requirement becomes prohibitive for large systems with stringent risk requirements and limited computational resources, such as real-time model predictive control and other edge applications with constrained hardware.

Motivated by these challenges, this paper develops a sample-efficient \ac{sa} for the rare-event regime
while maintaining theoretical feasibility guarantees. Our central research question is whether we can leverage the tail properties of $\mathbb{P}_{\vxi}$ and structure of $g$ 
to reduce sample complexity without restricting applicability.
To address this question, we propose a novel 
\emph{decision-scaled} \ac{sa} that exploits the interplay between the tail behavior of $\mathbb{P}_{\vxi}$ and the asymptotic structure of $g$.
Our method replaces the classical SA constraints~\eqref{eq:sp_constraints} with the following scaled constraints:
\begin{equation}\label{eq:ssp_constraints}
    g\big(s^{-\gamma}\vx,\vz^{(j)}\big) \leq  0, \quad j=1,2,\ldots,N_s,
\end{equation}
where $s\geq1$ is a tunable hyperparameter and $\gamma \neq 0$ is a constant determined by the asymptotic structure of $g$.
Our main result provides an explicit bound for $N_s$ that scales as $\varepsilon^{-(1/s^{\alpha})}$, where
$\alpha > 0$ is a certain tail index of the distribution $\mathbb{P}_{\vxi}$,
which is known for common distributions or can be reliably estimated from data.
The bound for $N_s$ depends only on the problem dimension $n$, risk level $\varepsilon$, confidence level $\beta$, hyperparameter $s$ and the tail index $\alpha$.
Notably, this bound is orders of magnitude smaller than the sample size required by the classical \ac{sa}.
We show that when the constraint $g$ exhibits a practically verifiable asymptotic structure and the distribution $\mathbb{P}_{\vxi}$ satisfies certain nonparametric tail conditions, solving the scenario problem using the scaled constraints~\eqref{eq:ssp_constraints} guarantees chance-constraint feasibility for all sufficiently small $\varepsilon$.

Our main contributions can be summarized as follows.
\begin{enumerate}
    \item \textbf{Polynomial reduction in sample complexity.}   
    Our decision-scaled \ac{sa} achieves a polynomial reduction in the required sample size compared to classical \ac{sa}, from $O(\varepsilon^{-1})$ to $O(\varepsilon^{-1/s^{\alpha}})$ for any $s > 1$, while providing asymptotic feasibility guarantees for~\eqref{eqn:cc_basic}. 
    This improvement is realized through a simple scaling of the decision vector 
    requiring no specialized algorithmic machinery or distribution-specific tuning. 
    
    \item \textbf{Wide applicability through mild structural conditions.}
    Roughly speaking, our approach requires only that the constraint function $g(\vx,\vz)$ be asymptotically homogeneous (a property satisfied by broad classes of constraints including linear, quadratic, posynomial, and other nonlinear forms) and that $\mathbb{P}_{\vxi}$ have a certain polynomial tail decay in log-scale.
    This latter condition 
    is satisfied by many distributions encountered in practice and can also be verified using standard techniques from extreme value theory.
    These mild assumptions enable us to establish a uniform large deviation principle for chance constraints in the rare-event regime.
  
    \item \textbf{Numerical validation.} 
    We validate our decision-scaled \ac{sa} through numerical experiments on portfolio optimization, structural engineering, and norm optimization benchmarks, 
    which feature linear, nonlinear, and joint chance constraint structures, respectively.
    In these test cases, our method achieves substantial computational reductions compared to classical \ac{sa} while guaranteeing feasibility.
    Open-source implementations are provided.
\end{enumerate}

\subsection{Related Literature}
Various approaches have been recently developed to address the computational challenges of solving \acp{cco}, particularly in the rare-event regime~\citep{subramanyam2023chance}. We categorize
these methods and position our contribution within the existing landscape.

Several works have adapted \ac{is} concepts to improve efficiency in evaluating the chance constraints in~\eqref{eqn:cc_basic}. 
Early work by~\citet{rubinstein1997optimization,rubinstein2002cross} established \ac{is} techniques for rare events in optimization contexts. 
\citet{nemirovski2006scenario} introduced sample scaling via majorizing distributions for \ac{sa}, although finding suitable distributions for a given application remains challenging.
\citet{barrera2016chance} combined \ac{saa} with \ac{is} to uniformly reduce the required sample size across all feasible decisions. However, their approach relies on specific problem structures, such as independent Bernoulli distributed uncertainty, to derive a suitable \ac{is} distribution.
\citet{blanchet2024efficient} proposed conditional \ac{is} for heavy-tailed distributions in \ac{sa}, requiring analytically computable constraint approximations.
Domain-specific \ac{is} methods like \citet{lukashevich2023importance} exploit structure in specific power systems optimization problems.

Recent works leverage \acp{ldp} for rare-event \acp{cco}.
\cite{tong2022optimization} reformulated chance constraints as bilevel problems, eliminating sampling but requiring the uncertainty to follow Gaussian mixture distributions.
\cite{blanchet2024optimization} derived asymptotic relationships between \ac{cco} and \ac{sa} under mild assumptions similar to ours, without addressing practical sample complexity or algorithmic implementation.
Similarly, \citet{deo2025scaling} characterized the scaling behavior of optimal values in the rare-event regime, focusing on theoretical properties rather than computational methods.
Our earlier work~\citep{choi2024reduced} introduced constraint scaling for linear constraints to reduce sample complexity of \ac{sa}. 
This paper extends that idea by scaling decision variables instead of constraints, enabling application to general nonlinear constraints.

Several techniques improve \ac{sa} efficiency without specifically targeting rare events.
\citet{campi2011sampling,romao2022exact}
developed constraint removal methods that improve performance of \ac{sa} by achieving less conservative solutions, rather than reducing sample complexity. \citet{care2014fast,schildbach2014scenario} 
proposed methods to reduce the required sample size of \ac{sa} by exploiting problem structure or domain knowledge. However, these approaches often require iterative algorithms or are tailored to specific applications such as model predictive control.

In contrast to the aforementioned works, our decision-scaling method provides a general recipe for achieving polynomial sample reduction for broad classes of
problems without requiring distribution-specific tuning, analytical approximations, or domain-specific knowledge.

\paragraph{Outline.} The rest of the paper is organized as follows.
Section~\ref{Sec:Preliminaries} provides technical preliminaries on tail modeling of distributions, followed by our main assumptions on the uncertainty distribution $\mathbb{P}_{\vxi}$ and the constraint function $g$. 
Section~\ref{Sec:ProposedMethod}
introduces our proposed decision-scaled \ac{sa} and presents our main theoretical contributions. 
Section~\ref{Sec:VerifyingConditions}
provides systematic and practical verification procedures for our 
assumptions along with illustrative examples.
Section~\ref{Sec:NumExp} demonstrates the effectiveness of our method through experiments on three benchmark problems: portfolio optimization, reliability-based column design, and norm optimization.
All proofs are deferred to Section~\ref{Sec:Proofs} to maintain readability of the main exposition.

\paragraph{Notations.}
Scalars are denoted by plain type ($a$), vectors by boldface ($\va$), and sets by calligraphic script ($\mathcal{A}$). 
We write $\mathbb{R}_{>0}\coloneqq(0,\infty)$ and $\mathbb{R}_{\ge0}\coloneqq[0,\infty)$. 
The zero vector is $\vzero$ and the vector of ones is $\vone$; their dimensions will be clear from context.
For sets $\mathcal{A}$ and $\mathcal{B}$,
their Cartesian product is $\mathcal{A}\times\mathcal{B}$, and their Minkowski sum is $\mathcal{A} + \mathcal{B} \coloneqq \{\va + \vb : \va \in \mathcal{A}, \vb \in \mathcal{B}\}$.
The closure, interior, and complement of $\mathcal{A}$ are denoted by $\overline{\mathcal{A}}$, $\mathcal{A}^\circ$, and $\mathcal{A}^c$, respectively.

Unless stated otherwise, $\|\cdot\|$ denotes the $\ell_2$-norm.
The compact ball of radius $\theta>0$ centered at $\vc$ is $\mathcal{B}_\theta(\vc)\coloneqq \{\vx : \|\vx-\vc\| \leq \theta\}$, with $\mathcal{B}_\theta\coloneqq\mathcal{B}_\theta(\vzero)$.
Given a function $f:\mathbb{R}^n\to\mathbb{R}$, the $a$-superlevel set of $f$ restricted to $\mathcal{B}_\theta$ is $\mathcal{L}^\theta_{\ge a}(f) \coloneqq \{\vx \in \mathcal{B}_\theta : f(\vx) \geq a\}$ and the corresponding strict superlevel set is $\mathcal{L}^\theta_{>a}(f) \coloneqq \{\vx \in \mathcal{B}_\theta : f(\vx) > a\}$. When $\theta=\infty$, we write $\mathcal{L}_{\ge a}(f)$ and $\mathcal{L}_{>a}(f)$, respectively.
The notation $\{\vz_u\}$ denotes a collection of vectors parameterized by $u \in \mathbb{R}_{>0}$.
We write $\lim_{u\to\infty}\vz_u=\vz$ if and only if
$\lim_{k\to\infty}\vz_{u_k}=\vz$ for every sequence $\{u_k\}_{k\in\mathbb{N}}\subset\mathbb{R}_{>0}$ satisfying $\lim_{k\to\infty}u_k=\infty$.

\section{Preliminaries and Assumptions}\label{Sec:Preliminaries}

\subsection{Regular Variation}
\begin{definition}\label{def:RV}
    A function $f:\mathbb{R}_{>0}\to\mathbb{R}_{>0}$ is 
    said to be
    \emph{regularly varying} with index $\kappa\in\mathbb{R}_{>0}$\footnote{Regular variation can permit $\kappa \in \mathbb{R}$ in general.}  
    if 
    \begin{align}\label{eqn:rv}
    \lim_{u\to\infty}\frac{f(uz)}{f(u)}=z^\kappa,
    \quad\forall  z>0.
    \end{align}
\end{definition}
We write $f\in \RV(\kappa)$, or simply $f\in \RV$ when the index is not explicitly needed.
Regular variation characterizes functions with polynomial-like asymptotic behavior. 
Examples of such functions include $z^\kappa$, 
$z^\kappa\log(1+z)$, and $z^\kappa\exp\left(\sqrt{\log(1+z)}\right)$. 

\begin{definition}\label{def:mrv}
    A function $f:\mathcal{Z}\to \mathbb R_{\ge0}$, where $\mathcal{Z}\subseteq\mathbb{R}^m$ is a cone, is said to be \emph{multivariate regularly varying} if there exists $h\in \RV$ and a continuous function $f^*:\mathcal{Z}\to\mathbb{R}_{\ge0}$ such that 
\begin{equation}\label{eqn:mrv}
\lim_{u\to\infty}\frac{f(u\vz_u)}{h(u)} = f^*(\vz)
\end{equation}   
for every convergent sequence $\{\vz_u\}\to\vz$.
\end{definition}
We write $f\in \MRV(\mathcal{Z},h,f^*)$ or simply $f\in \MRV$.
Here, $h$ and $f^*$ encode the asymptotic radial scaling and directional dependence of $f$, respectively.
We refer the reader to~\citet[Chapter 2 and 5]{resnick2007heavy} for more details.

\subsection{Assumptions on the Uncertainty Distribution}
The multivariate extension of regular variation provides a natural framework for characterizing the tail behavior of $\mathbb{P}_{\vxi}$.

\begin{assumption}\label{assum:light-tail}
The probability distribution $\mathbb{P}_{\vxi}$ admits a density function of the form $\exp(-Q(\vz))$ where $Q\in \MRV(\supportset, q,\lambda)$ with $q\in\RV(\alpha)$ for some $\alpha>0$. Additionally, $\lambda(\vz)>0$ for all $\vz\in\supportset\cap\{\vz'\in\mathbb{R}^m:\|\vz'\|=1\}$.
\end{assumption}

\begin{remark}\label{Remark:ClosedCone}
Implicit in Assumption~\ref{assum:light-tail} is that the support set $\supportset$ must be a closed cone.
The closedness follows from the definition of support while the conic property arises from the multivariate regular variation of $Q$.
\end{remark}

Intuitively, Assumption~\ref{assum:light-tail} states that the negative log-density $Q(u\vz)$ behaves like $q(u)\lambda(\vz)$ for large $u$ and unit $\vz$,
where $q$ governs how rapidly $Q$ grows as one moves away from the origin,
while $\lambda$ acts as a directional coefficient modulating this growth.
We emphasize that this assumption is nonparametric in nature.
This is in contrast to existing literature that often relies on specific parametric distribution families~\citep{barrera2016chance,tong2022optimization}. It imposes only a mild structural condition on the distribution tail, requiring that the log-density exhibits polynomial behavior asymptotically. 
Importantly, this assumption does not require the exact functional form of the distribution but only its tail behavior,
eliminating the need for normalization constants that are often intractable to compute.
The rate of the behavior is governed by the tail index $\alpha>0$, where a larger
$\alpha>0$ corresponds to a lighter tail (i.e., faster decay of tail probabilities). This general framework encompasses a wide range of commonly used distributions and their mixtures, including light-tailed families like the Gaussian ($\alpha=2$) and Exponential ($\alpha=1$), as well as sub-exponential families like the Weibull ($\alpha<1$).

This framework is flexible enough to accommodate non-parametric tail correlations, as the dependence structure between marginal density is implicitly encoded in $\lambda$.
This includes various dependences modeled by copulas, such as the Gumbel, Clayton, and other Archimedean types; see \citet[EC.3]{deo2023achieving} for further details on such constructions.
We emphasize that a significant advantage of our proposed decision-scaled \ac{sa} is that it does not require their explicit distributional formulation. 

Our method's implementation relies solely on $\alpha$, which characterizes the tail decay rate of the marginal distributions and is invariant to the underlying dependence structure. This property makes our approach truly practical and nonparametric; from the implementation perspective, distributions with the same tail index, such as Chi and Gaussian ($\alpha=2$), are treated identically.
The tail index $\alpha$ is often a well-known parameter for common distributions.
Table~\ref{table:distributions} provides several examples of distribution families and their corresponding tail indices.
The practicality of our method is further enhanced by the fact that $\alpha$ can be reliably estimated from data without fitting an entire (parameteric) distribution to the data~\citep{de2016approximation,einmahl2021testing,einmahl2025empirical}.

\begin{table}[!ht]
    \centering
    \begin{threeparttable}
    \caption{Examples of distributions satisfying Assumption~\ref{assum:light-tail}}
    \label{table:distributions}
    \begin{tabularx}{\textwidth}{X|c}
        \toprule
        Distribution family\tnote{1} & Tail index $\alpha$\\
        \midrule\midrule
        Chi-squared, Erlang, Exponential, Gamma, Inverse Gaussian, Laplace & $1$\\ 
        \midrule
        Chi, Gaussian, Gaussian mixtures, Maxwell–Boltzmann,  Rayleigh & $2$\\ 
        \midrule
        Generalized-gamma with shape parameter $k$ & $k$\\ 
        \midrule
        Generalized gaussian with shape parameter $k$ & $k$\\
        \midrule
        Weibull with shape parameter $k$ & $k$ \\
        \bottomrule
    \end{tabularx}
    \begin{tablenotes}
        \small
        \item[1] Unless explicitly specified (e.g., multivariate normal, Gaussian mixtures), these families are constructed from the corresponding univariate marginals, often assuming independence or a specified copula.
    \end{tablenotes}
    \end{threeparttable}
\end{table}

\subsection{Assumptions on the Constraint Function}
We remind the reader that the deterministic feasible set $\mathcal{X}$ is closed and convex, and the constraint function $g: \mathcal{X}\times \supportset \to \mathbb{R}$ is such that $g(\cdot, \vz)$ is closed and convex for any fixed $\vz$.
To characterize its asymptotic structure, we first introduce the following concept of a rate-parameterized asymptotic cone.
\begin{definition}[Asymptotic Cone]\label{def:asymptoset}
    For a nonempty set $\mathcal{A}$ and $\theta\neq0$, an \emph{asymptotic cone} of $\mathcal{A}$ with rate $\theta$ is defined as
    \begin{align}
        \mathcal{A}^{\infty}_{\theta} \coloneqq \left\{ \vy : \exists \{\va_u\} \subset \mathcal{A}, \exists \{\lambda_u\} \subset \mathbb{R}_{>0} \text{ s.t. } \lambda_u \to \infty \text{ and } \frac{\va_u}{(\lambda_u)^\theta} \to \vy \right\}.
    \end{align}
\end{definition}
Definition~\ref{def:asymptoset} unifies two fundamental geometric concepts in optimization.
Specifically, when $\theta>0$, $\mathcal{A}^\infty_\theta$ coincides with the horizon cone~\citep[Definition 3.3]{rockafellar2009variational},
capturing the asymptotic directions along which the set $\mathcal{A}$ extends to infinity.
Conversely, when $\theta<0$, $\mathcal{A}^\infty_\theta$ corresponds to the tangent cone of $\mathcal{A}$ at the origin~\citep[Definition 6.1]{rockafellar2009variational}.
In either case, the set $\mathcal{A}^\infty_\theta$ forms a cone characterizing the asymptotic geometry of $\mathcal{A}$ 
under the scaling regime governed by $\theta$.

Notably, for any closed cone $\mathcal{A}$, we have
$\mathcal{A}^\infty_\theta=\mathcal{A}$ for all $\theta\neq0$. 
In particular, under Assumption~\ref{assum:light-tail}, since the support set $\supportset$ is a closed cone (Remark~\ref{Remark:ClosedCone}), we obtain $\supportset^\infty_\theta=\supportset$ for any $\theta\neq0$.
This unified framework allows us to consistently analyze both the unbounded and infinitesimal properties of the feasible region depending on the problem structure.

\

\begin{assumption}\label{assum:conti-conv}
    The constraint function $g$ satisfies the following properties:
    \begin{enumerate}[label=({A\arabic*}), leftmargin=*]
        \item\label{Eq:Conti-Convergence} There exist constants $\gamma\neq0$, $\rho\geq 0$, and a continuous function $g^*\colon\mathcal{X}^{\infty}_{\gamma}\times\supportset\to \mathbb{R}$
        such that,
        for every $\vy\in\mathcal{X}^{\infty}_{\gamma}$and $\vw\in\supportset$, whenever 
        sequences $\{\vx_u\}\subset\mathcal{X}$ and $\{\vz_u\}\subset\supportset$ satisfy
        \begin{equation}\label{eq:relation_x_and_y}
        \lim_{u \to \infty} \frac{\vx_u}{u^\gamma} = \vy \quad \text{and} \quad \lim_{u \to \infty} \frac{\vz_u}{u} = \vw,
        \end{equation}
        the following limit holds:
        \begin{equation}\label{eq:direct_conti_conv}
        \lim_{u \to \infty} \frac{g(\vx_u, \vz_u)}{u^{\rho}} = g^*(\vy, \vw).
        \end{equation}
        
        \item\label{Assum:NonEmptyHorizon} $\mathcal{X}^{\infty}_{\gamma}\setminus\{\vzero\}\neq\emptyset$;
        
        \item\label{eq:away_from_0} 
        $g^*(\vy,\vzero)< 0$ for all $\vy\in \mathcal{X}^{\infty}_{\gamma}\setminus\{\vzero\}$;

       \item\label{eq:nonempty_set_g^*} 
       $\{\vw\in\supportset: g^*(\vy,\vw)> 0\}\neq \emptyset$ for all $\vy\in \mathcal{X}^{\infty}_{\gamma}\setminus\{\vzero\}$;

        \item\label{assum:trivial_safety_at_0} 
        If $\gamma<0$, then $g(\vzero,\vz)\le0$ for all $\vz\in\supportset$.
    \end{enumerate}
    
\end{assumption}
Assumption~\ref{Eq:Conti-Convergence} requires the constraint function to exhibit an asymptotic homogeneity, characterized by the limit function $g^*$.
When the decision and random vectors scale by factors $u^\gamma$ and $u$ respectively, the constraint function scales by $u^\rho$, ensuring the limit remains nondegenerate. 
The sign of $\gamma$ encodes the problem structure, consistent with Definition~\ref{def:asymptoset}.
A positive $\gamma$ indicates that decisions grow with uncertainty, 
while a negative $\gamma$ indicates that decisions shrink as uncertainty grows.

Assumption~\ref{Assum:NonEmptyHorizon} ensures that the asymptotic cone $\mathcal{X}^{\infty}_{\gamma}$ is non-trivial. %
Assumption~\ref{eq:away_from_0} ensures that asymptotically negligible uncertainty does not trigger constraint violations; in other words, the system remains safe when uncertainty vanishes. 
Conversely, Assumption~\ref{eq:nonempty_set_g^*} 
prevents triviality by guaranteeing that every non-zero decision direction carries inherent risk in the asymptotic limit.
Notably, when $\gamma < 0$, Assumption~\ref{Assum:NonEmptyHorizon} and the closedness of the feasible set $\mathcal{X}$ imply that $\vzero \in \mathcal{X}$. 
This guarantees that $g(\vzero, \vz)$ is well-defined,
allowing Assumption~\ref{assum:trivial_safety_at_0} to further ensure that the origin is feasible for all uncertainty realizations.

These conditions, while technical, encompass a broad class of practical 
constraints and exclude pathological cases. Verification procedures 
and examples are provided in Section~\ref{Sec:VerifyingConditions}. For now, we simply note that the scaling exponents $(\gamma,\rho)$ directly govern the decision-scaled \ac{sa} that we introduce in the next section. Their uniqueness is therefore essential to ensure that the method is well-defined and yields unambiguous sample complexity bounds.
The following result establishes this uniqueness.

\begin{proposition}[Uniqueness of Scaling]\label{Lem:UniqueScalingExponents}
    Under Assumptions~\labelcref{Eq:Conti-Convergence,Assum:NonEmptyHorizon,eq:away_from_0,eq:nonempty_set_g^*}, the pair $(\gamma,\rho)$ is unique.
\end{proposition}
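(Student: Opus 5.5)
We argue by contradiction. Suppose two triples $(\gamma_1,\rho_1,g_1^*)$ and $(\gamma_2,\rho_2,g_2^*)$ both satisfy \labelcref{Eq:Conti-Convergence,Assum:NonEmptyHorizon,eq:away_from_0,eq:nonempty_set_g^*}. We first show $\gamma_1=\gamma_2$, and then $\rho_1=\rho_2$. Throughout, the tool is to feed a \emph{single} sequence of pairs $(\vx,\vz)$ into both limits~\eqref{eq:direct_conti_conv}, each with its own time parameter, and read off incompatible signs of $g$. Two facts from Definition~\ref{def:asymptoset} are used repeatedly. First, the cone $\mathcal{X}^\infty_\theta$ depends on $\theta$ only through its sign, since $\va_u/\lambda_u^{\theta}=\va_u/(\lambda_u^{\theta/\theta'})^{\theta'}$. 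Second, $\vzero$ lies in every $\mathcal{X}^\infty_\theta$.

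\textbf{Same sign, $\gamma_1\neq\gamma_2$.} Relabel so that $r\coloneqq\gamma_1/\gamma_2\in(0,1)$. Both cones then coincide, so we can pick $\vy\neq\vzero$ in them (by \ref{Assum:NonEmptyHorizon}) and a sequence $\vx_u\in\mathcal{X}$ with $\vx_u/u^{\gamma_1}\to\vy$. By \ref{eq:nonempty_set_g^*} for the second pair, choose $\vw\in\supportset$ with $g_2^*(\vy,\vw)>0$.

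Now set $v=u^{r}\to\infty$ and $\vz_u=v\vw\in\supportset$; the last inclusion holds because $\supportset$ is a cone (Remark~\ref{Remark:ClosedCone}). In the second parametrization, $\vx_u/v^{\gamma_2}=\vx_u/u^{\gamma_1}\to\vy$ and $\vz_u/v=\vw$, so $g(\vx_u,\vz_u)/v^{\rho_2}\to g_2^*(\vy,\vw)>0$. In the first parametrization, $\vz_u/u=u^{r-1}\vw\to\vzero$, so $g(\vx_u,\vz_u)/u^{\rho_1}\to g_1^*(\vy,\vzero)<0$ by \ref{eq:away_from_0}. Hence $g(\vx_u,\vz_u)$ is eventually both positive and negative, a contradiction.

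\textbf{Opposite signs, say $\gamma_1>0>\gamma_2$.} This is the step I expect to be the main obstacle, because the two parametrizations run in opposite directions and cannot be synchronized as above. The plan is to exploit convexity.

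First, $\gamma_2<0$ together with \ref{Assum:NonEmptyHorizon} and closedness gives $\vzero\in\mathcal{X}$. Let $\vd\neq\vzero$ be a horizon direction from $\mathcal{X}^\infty_{\gamma_1}$. Since $\mathcal{X}$ is closed and convex with $\vzero\in\mathcal{X}$, the whole ray $\{t\vd:t\ge0\}$ lies in $\mathcal{X}$. Consequently $\vd$ belongs to both cones, and the explicit sequences $u^{\gamma_1}\vd$ (large $t$) and $v^{\gamma_2}\vd$ (small $t$) are admissible.

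Next choose $\vw$ with $g_2^*(\vd,\vw)>0$. This gives $g(v^{\gamma_2}\vd,v\vw)>0$ for all large $v$. Applying \ref{eq:away_from_0} to the first pair with the fixed point $\vz=v\vw$ (so $\vz/u\to\vzero$) gives $g(t\vd,v\vw)<0$ for all sufficiently large $t$. I would then use convexity of $t\mapsto g(t\vd,\vz)$ on $[0,\infty)$ to interpolate between these two regimes. The intended contradiction is with either \ref{eq:away_from_0} for the second pair, evaluated along $\vz_v/v\to\vzero$, or with \ref{eq:nonempty_set_g^*} for the first pair, obtained by choosing $\vz$ growing slowly relative to $t^{1/\gamma_1}$.

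If this direct convexity argument fails to close, the fallback is a diagonal sequence $\vz_k$, $t_k$ chosen so that both limits apply simultaneously after passing to subsequences.

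\textbf{Equal $\gamma$, uniqueness of $\rho$.} Once $\gamma_1=\gamma_2$, take $\vy\neq\vzero$ in the common cone with $\vx_u/u^{\gamma}\to\vy$, and $\vw$ with $g_1^*(\vy,\vw)>0$, and set $\vz_u=u\vw$. Both limits apply to this sequence, so
\begin{equation*}
\frac{g(\vx_u,\vz_u)}{u^{\rho_2}}=\frac{g(\vx_u,\vz_u)}{u^{\rho_1}}\,u^{\rho_1-\rho_2}.
\end{equation*}
The first factor converges to $g_1^*(\vy,\vw)>0$. If $\rho_1>\rho_2$, the right-hand side therefore diverges, contradicting the finiteness of the real value $g_2^*(\vy,\vw)$; hence $\rho_1\le\rho_2$. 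Exchanging the roles of the two pairs gives $\rho_2\le\rho_1$, so $\rho_1=\rho_2$.
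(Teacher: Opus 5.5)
Your same-sign step and your equal-$\gamma$ step are correct. The sign comparison via $v=u^{r}$, $\vz_u=v\vw$ is in fact a tidier way than the paper's to rule out same-sign pairs, since it needs no split on $\rho$. The gap is the opposite-sign case $\gamma_1>0>\gamma_2$. You leave it as a plan, and that plan does not close under \ref{Eq:Conti-Convergence}--\ref{eq:nonempty_set_g^*}. Your two facts live in disjoint regimes. One is $g(v^{\gamma_2}\vd,v\vw)>0$ near $t=0$. The other is $g(t\vd,v\vw)<0$, but only for $t\ge T(v)$, with no control over $T(v)$. Convexity of $t\mapsto g(t\vd,v\vw)$ can only propagate the positive value toward $t=0$, which gives $g(\vzero,v\vw)>0$ for large $v$. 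That would contradict \ref{assum:trivial_safety_at_0} at once, but \ref{assum:trivial_safety_at_0} is not a hypothesis of this proposition. Under \ref{Eq:Conti-Convergence}--\ref{eq:nonempty_set_g^*} it only yields $g_i^*(\vzero,\vw)\ge 0$, which is no contradiction unless some $\rho_i=0$. Nothing in the sketch produces the upper bound at $t=u^{\gamma_1}$ you would need to violate \ref{eq:nonempty_set_g^*} for the first pair, and the diagonal fallback is unspecified.

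The missing idea is that the two parametrizations \emph{can} be synchronized, at the point $\vzero\in\mathcal{X}^\infty_{\gamma_1}$, where \ref{Eq:Conti-Convergence} still applies. Take $\vx_u=u^{\gamma_2}\vd\in\mathcal{X}$ and $\vz_u=u\vw$. Then $\vx_u/u^{\gamma_1}\to\vzero$, hence
\[
g_1^*(\vzero,\vw)=\lim_{u\to\infty}\frac{g(\vx_u,\vz_u)}{u^{\rho_2}}\,u^{\rho_2-\rho_1},\qquad \frac{g(\vx_u,\vz_u)}{u^{\rho_2}}\to g_2^*(\vd,\vw).
\]
You must then split on $\rho$, as the paper does in its cases (iia), (iii) and (iid).

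\textbf{Case $\rho_2>\rho_1$.} A $\vw$ with $g_2^*(\vd,\vw)>0$ makes the right side diverge, contradicting finiteness of $g_1^*(\vzero,\vw)$.

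\textbf{Case $\rho_1=\rho_2=\rho$.} Taking $\vw=\vzero$ gives $g_1^*(\vzero,\vzero)=g_2^*(\vd,\vzero)<0$. When $\rho>0$ this contradicts $g_1^*(\vzero,\vzero)=0$, which follows from homogeneity. When $\rho=0$, homogeneity and continuity force $g_1^*(\vzero,\cdot)\equiv g_1^*(\vzero,\vzero)<0$. This contradicts $g_1^*(\vzero,\vw)=g_2^*(\vd,\vw)>0$ for a $\vw$ supplied by \ref{eq:nonempty_set_g^*}.

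\textbf{Case $\rho_2<\rho_1$.} You get $g_1^*(\vzero,\cdot)\equiv 0$, and only now is convexity needed. It must be applied to the limit $g_1^*(\cdot,\vw)$, which inherits convexity from $g$ and $\mathcal{X}$, together with joint homogeneity. Let $\vy_1\in\mathcal{X}^\infty_{\gamma_1}\setminus\{\vzero\}$ with $g_1^*(\vy_1,\vw_1)>0$, and $t\in(0,1)$. Then
\[
t^{\rho_1}g_1^*(\vy_1,\vw_1)=g_1^*(t^{\gamma_1}\vy_1,t\vw_1)\le t^{\gamma_1}g_1^*(\vy_1,t\vw_1)+(1-t^{\gamma_1})g_1^*(\vzero,t\vw_1)=t^{\gamma_1}g_1^*(\vy_1,t\vw_1).
\]
The left side is positive, while the right side is negative for small $t$ by \ref{eq:away_from_0} and continuity, which is the contradiction.
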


\section{Decision-Scaled \ac{sa}}\label{Sec:ProposedMethod}

\subsection{Overview of Classical \ac{sa}}\label{Scenario_approach}
Let $\{\vz^{(j)}\}_{j=1}^{N}$ denote $N$ \ac{iid} samples, or scenarios, drawn from $\mathbb{P}_{\vxi}$. 
The \ac{sa} approximates \eqref{eqn:cc_basic} by solving the following so-called scenario problem:
\begin{equation}
    \label{eq:sp}
    \tag{$\text{SP}_{N}$}
    \begin{aligned}
        \minimize_{\vx\in \mathcal{X}} & \quad c(\vx)\\
        \text{subject to}   & \quad g\left(\vx,\vz^{(j)}\right) \leq  0, \quad j=1,\ldots,N.
    \end{aligned}
\end{equation}
This problem is convex since $g(\vx,\vz)$ is convex in $\vx$ for every fixed~$\vz$.
\begin{definition}
    For a given $\vx\in\mathcal{X}$, the violation probability is defined as $V(\vx)\coloneqq\mathbb{P}_{\vxi}(\{\vz:g(\vx,\vz)>0\})$.
\end{definition}
Consequently, the condition $V(\vx) \le \varepsilon$ ensures that $\vx$ satisfies the feasibility requirement of~\eqref{eqn:cc_basic}.

Let ${\vx}^*_{N}$ denote an optimal solution of the scenario problem~\eqref{eq:sp}. The central result of the \ac{sa} is to establish the sample requirement that ensures feasibility with respect to the original chance constraint.
\begin{theorem}[\cite{campi2009scenario}, Theorem~1]
\label{Thm:ScenarioApproach}
Given a risk tolerance level $\varepsilon\in (0,1)$ and a confidence parameter $\beta\in (0,1)$, choose
\begin{align}\label{eq:numsample}
    N \geq \mathsf{N}(\varepsilon,\beta) \coloneqq  \left\lceil\frac{2}{\varepsilon} \left( \log \frac{1}{\beta} + n \right)\right\rceil.
\end{align}
If the scenario problem~\eqref{eq:sp} admits an optimal solution ${\vx}^*_{N}$, then 
\begin{align*}
    \mathbb{P}^N_{\vxi}(V({\vx}^*_{N})\le\varepsilon)\ge 1-\beta,
\end{align*}
where $\mathbb{P}^N_{\vxi}$ is the $N$-fold product probability distribution.
\end{theorem}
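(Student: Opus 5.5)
We reduce the statement to the classical scenario-approach theory for convex programs. The key fact we invoke is the exact bound of Campi and Garatti: for a convex scenario program in $n$ decision variables whose optimal solution exists and is unique, with ties broken by a fixed convex tie-break rule, we have
\begin{equation*}
\mathbb{P}^N_{\vxi}\bigl(V(\vx^*_N)>\varepsilon\bigr)\le \sum_{i=0}^{n-1}\binom{N}{i}\varepsilon^i(1-\varepsilon)^{N-i}.
\end{equation*}
This binomial bound rests on the notion of \emph{support constraints}: scenarios whose removal changes the optimal solution. Convexity of $c$, of $\mathcal{X}$, and of $g(\cdot,\vz)$ ensures, through a Helly-type argument, that there are at most $n$ support constraints. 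A combinatorial counting argument then shows that the violation probability of $\vx^*_N$ is dominated by a Beta$(n,N-n+1)$-type tail.

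With that bound in hand, the proof reduces to showing that the right-hand side is at most $\beta$ whenever $N\ge \mathsf{N}(\varepsilon,\beta)$. The right-hand side is $\mathbb{P}(B<n)$ for $B\sim\mathrm{Bin}(N,\varepsilon)$, whose mean is $N\varepsilon$. We apply the Chernoff lower-tail bound
\begin{equation*}
\mathbb{P}(B\le (1-\delta)N\varepsilon)\le \exp\bigl(-\delta^2N\varepsilon/2\bigr).
\end{equation*}
Alternatively, we can use the elementary estimate $\binom{N}{i}\varepsilon^i(1-\varepsilon)^{N-i}\le \frac{(N\varepsilon)^i}{i!}e^{-\varepsilon(N-i)}$ and sum over $i$. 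Take the case $N\varepsilon\ge 2(\log(1/\beta)+n)$, which is exactly the assumption on $N$. Then $n\le N\varepsilon/2$, so we may use $\delta=1/2$, which gives the bound $\exp(-N\varepsilon/8)$. This constant is too weak, so we will use the sharper route of Campi--Garatti. For any $a>1$ we have $\mathbb{P}(B<n)\le a^{n}\mathbb{E}[a^{-B}]=a^n(1-\varepsilon+\varepsilon/a)^N\le a^n e^{-N\varepsilon(1-1/a)}$. Choosing $a=e$ yields the bound $\exp\bigl(n-N\varepsilon(1-e^{-1})\bigr)$. This is at most $\beta$ once $N\varepsilon(1-e^{-1})\ge \log(1/\beta)+n$, and that inequality is implied by $N\varepsilon\ge 2(\log(1/\beta)+n)$ because $2(1-e^{-1})>1$. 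This settles the arithmetic.

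The main obstacle is the non-uniqueness and measurability of $\vx^*_N$. The theorem assumes only that an optimal solution exists. We therefore fix a lexicographic tie-break using strictly convex auxiliary objectives, for instance minimizing $\|\vx\|^2$ over the optimal set. This makes the solution unique and preserves the at-most-$n$ support constraints property. Because $g(\vx,\cdot)$ is continuous and thus Borel, $V(\vx^*_N)$ is then measurable. The degenerate case, where more than $n$ scenarios are binding simultaneously, is handled as in Campi--Garatti by a perturbation/heating argument. That argument shows the binomial bound still holds, possibly with inequality, for all nondegenerate instances.
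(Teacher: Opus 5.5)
The paper gives no proof of this statement. It is imported verbatim from Campi, Garatti and Prandini (2009), so your proposal is a reconstruction of the cited result rather than an alternative to an argument in the paper, and it is correct. Reducing to the Campi--Garatti binomial bound leaves only $\mathbb{P}(B<n)\le\beta$ for $B\sim\mathrm{Bin}(N,\varepsilon)$. Your exponential Markov step with $a=e$ shows that $N\varepsilon(1-e^{-1})\ge\log(1/\beta)+n$ already suffices, so the factor $2$ in $\mathsf{N}(\varepsilon,\beta)$ is slack: any constant at least $e/(e-1)$ works, and the abandoned $\delta=1/2$ Chernoff detour can simply be deleted.

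What your route buys over the bare citation is that it exposes hypotheses the paper's wording leaves implicit:
\begin{itemize}
\item The guarantee is for the optimizer chosen by a fixed convex tie-break rule, not for an arbitrary optimal solution. An adversarial selection among multiple optimizers can violate it.
\item It needs the support-constraint count to be $n$ for a general convex objective $c$, which your Helly-type remark provides. Applying the linear-objective theorem to an epigraph reformulation would instead give $n+1$.
\end{itemize}

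Three side remarks need tightening, though none affects the reduction.
\begin{itemize}
\item Campi--Garatti assume a solution exists for every sample. If the program can be infeasible, ``admits an optimal solution'' has to be read as a bound on the joint event that a solution exists and $V(\vx^*_N)>\varepsilon$ (as in Calafiore 2010), not as a conditional probability.
\item The heating argument is the device for problems that are not fully supported, meaning fewer than $n$ support constraints with positive probability. There the bound may be strict, whereas equality holds in the fully supported case. Having more than $n$ active constraints needs no special treatment, since the number of support constraints stays at most $n$ regardless. Your closing phrase ``for all nondegenerate instances'' has this backwards.
\item Continuity of $g(\vx,\cdot)$ alone does not make $V(\vx^*_N)$ measurable. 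You also need $\vx\mapsto V(\vx)$ to be Borel (e.g.\ via joint measurability of $g$) and $\vx^*_N$ to be a measurable function of the samples; this literature usually assumes these outright.
\end{itemize}
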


Theorem~\ref{Thm:ScenarioApproach} establishes that, with probability at least $1-\beta$ over sample realizations, the optimal solution $\vx^*_N$ satisfies the feasibility requirement for~\eqref{eqn:cc_basic}.
Notably, this guarantee is \emph{a priori}---it holds independently of the specific sample realizations of the uncertainty and is \emph{distribution-free}, maintaining validity regardless of the underlying probability distribution.

However, this generality comes at a computational cost.
In particular, for applications where $\varepsilon$ is exceptionally small, typically $10^{-3}$ to $10^{-5}$, the required sample size scales as $1/\varepsilon$. 
For instance, achieving $\varepsilon=10^{-3}$ with 99\% confidence ($\beta=0.01$) requires $N\ge 11,211$ samples, even with only a single decision variable ($n=1$).
Since each sample adds a new constraint in~\eqref{eq:sp}, the resulting optimization problem explodes in size and becomes intractable.

\subsection{Decision-scaled \ac{sa}}
The decision-scale \ac{sa} addresses the computational intractability of classical \ac{sa} in the rare-event regime.
Throughout this section, 
we assume that the distribution $\mathbb{P}_{\vxi}$ satisfies Assumption~\ref{assum:light-tail} with tail index $\alpha>0$, and the constraint function $g$ satisfies Assumption~\ref{assum:conti-conv} with parameters $(\gamma,\rho)$.
Our approach is parameterized by a \emph{scaling hyperparameter} $s \ge 1$, and is formulated as follows:
\begin{equation}
    \label{eq:s-sp}
    \tag{$\text{SSP}_{N, s}$}
    \begin{aligned}
        \minimize_{\vx\in \mathcal{X}}&\quad c(\vx)\\
        \text{subject to}&\quad g\left(s^{-\gamma}\vx, \vz^{(j)}\right) \leq 0, \quad j=1,\ldots,N,
    \end{aligned}
\end{equation}
where $\{\vz^{(j)}\}_{j=1}^{N}$ are i.i.d. samples drawn from $\mathbb{P}_{\vxi}$. 
Here, we choose
\begin{align}\label{eqn:scaled_N}
    N \geq \mathsf{N}\left(\varepsilon^{1/s^{\alpha}},\beta\right) =  \left\lceil\frac{2}{\varepsilon^{1/s^{\alpha}}} \left( \log \frac{1}{\beta} + n \right)\right\rceil.
\end{align}
In contrast to existing methods~\citep{nemirovski2006scenario,blanchet2024efficient} that incur the computational overhead of constructing \ac{is} distributions, our approach is easy to implement as it relies exclusively on samples from the original distribution $\mathbb{P}_{\vxi}$.

When $s=1$, problem~\eqref{eq:s-sp} reduces to classical \ac{sa}. For $s>1$, the key insight is that scaling the decision vectors by $s^{-\gamma}$ tightens each constraint, effectively restricting the feasible region. Intuitively, this conservatism allows for fewer samples while maintaining feasibility guarantees. The following theorem formalizes this intuition.
\begin{theorem}\label{thm:LT_feasible}
    Fix any
    $s \geq 1$ and $\beta \in (0,1)$. 
    Let $\{\vx_{\varepsilon}\}$ be a sequence of optimal solutions to~\eqref{eq:s-sp}.
    Then, 
    \begin{equation}\label{eq:asymptotic_feasibility}
    \mathbb{P}_{\vxi}^\infty\left(\liminf_{\varepsilon \rightarrow 0}\frac{\log V(\vx_{\varepsilon})}{\log \varepsilon} \geq 1\right)\ge 1-\beta,
    \end{equation}
    where $\mathbb{P}_{\vxi}^\infty$ is the infinite product probability distribution.
\end{theorem}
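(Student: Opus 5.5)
The plan has two stages. First, reduce \eqref{eq:s-sp} to a classical scenario program so that Theorem~\ref{Thm:ScenarioApproach} applies at the inflated risk level $\varepsilon^{1/s^{\alpha}}$. Second, use a uniform large deviation estimate for $V$ to convert that inflated level back into the target exponent $1$.

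\textbf{Step 1 (change of variables).} Set $\vy = s^{-\gamma}\vx$. Then \eqref{eq:s-sp} becomes
\[
\min\{c(s^{\gamma}\vy) : \vy\in s^{-\gamma}\mathcal{X},\ g(\vy,\vz^{(j)})\le 0,\ j=1,\ldots,N\},
\]
which is again a convex scenario program in $n$ variables. Its optimal solution is $\vy_\varepsilon = s^{-\gamma}\vx_\varepsilon$. Since $N\ge \mathsf{N}(\varepsilon^{1/s^\alpha},\beta)$, Theorem~\ref{Thm:ScenarioApproach} gives, for each fixed $\varepsilon$,
\[
\mathbb{P}^\infty_{\vxi}(A_\varepsilon)\ge 1-\beta, \qquad A_\varepsilon\coloneqq\{V(s^{-\gamma}\vx_\varepsilon)\le \varepsilon^{1/s^\alpha}\}.
\]
The samples are viewed as the first $N(\varepsilon)$ coordinates of one infinite i.i.d.\ sequence. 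To pass to the ``$\liminf$'' event, I would take a sequence $\varepsilon_k\downarrow 0$ and apply reverse Fatou: $\mathbb{P}^\infty(\limsup_k A_{\varepsilon_k})\ge \limsup_k \mathbb{P}^\infty(A_{\varepsilon_k})\ge 1-\beta$. On this event the deterministic estimate of Step~3 gives the desired bound along a subsequence. The statement, however, asks for the full $\liminf$ over $\varepsilon\to0$. To close that gap I would exploit the nesting of the scenario constraints: as $\varepsilon$ decreases the feasible sets shrink, so optimal values are monotone. Failing that, I would restrict to a fixed sequence of $\varepsilon_k$ so that the event becomes eventual. I flag this measure-theoretic point as needing care.

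\textbf{Step 2 (uniform LDP for $V$).} The core analytic ingredient is a uniform large deviation statement. Take decisions $\vx_t$ with $\vx_t/t^{\gamma}\to\vy\in\mathcal{X}^\infty_\gamma\setminus\{\vzero\}$. Writing $\vz = t\vw$ and using Assumption~\ref{assum:light-tail} together with \ref{Eq:Conti-Convergence}, I expect
\[
\lim_{t\to\infty}\frac{\log V(\vx_t)}{q(t)} = -I(\vy), \qquad I(\vy)\coloneqq\inf\{\lambda(\vw): \vw\in\supportset,\ g^*(\vy,\vw)>0\},
\]
with $0<I(\vy)<\infty$. Positivity of $I$ comes from \ref{eq:away_from_0}, continuity of $g^*$ and $\lambda>0$ on the unit sphere; finiteness comes from \ref{eq:nonempty_set_g^*}. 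The proof would run a Laplace-principle argument:
\begin{itemize}
\item the upper bound uses $Q(t\vw_t)/q(t)\to\lambda(\vw)$ uniformly on compacts, plus a tail truncation exploiting $\lambda$ bounded below;
\item the lower bound uses a small ball inside the open set $\{g^*(\vy,\cdot)>0\}$.
\end{itemize}
Uniformity over directions $\vy$ (normalised to the unit sphere of $\mathcal{X}^\infty_\gamma$) would be obtained by a compactness and sequence argument, which is why \ref{Eq:Conti-Convergence} is phrased with sequences. Applying this with scale $t/s$ to $s^{-\gamma}\vx_t$ (which satisfies $s^{-\gamma}\vx_t/(t/s)^{\gamma}\to\vy$), and using $q\in\RV(\alpha)$, gives
\[
\frac{\log V(\vx_t)}{\log V(s^{-\gamma}\vx_t)}\to\frac{q(t)}{q(t/s)} \to s^{\alpha}.
\]

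\textbf{Step 3 (combining).} On $A_\varepsilon$ we have $V(s^{-\gamma}\vx_\varepsilon)\to0$. Together with \ref{eq:away_from_0} and \ref{assum:trivial_safety_at_0}, this forces $\vx_\varepsilon$ to be ``large'' on the $\gamma$-scale. This means $\|\vx_\varepsilon\|\to\infty$ if $\gamma>0$, and $\vx_\varepsilon\to\vzero$ along non-trivial directions if $\gamma<0$; the trivial case $V=0$ gives the bound immediately. Define $t_\varepsilon$ by $\|\vx_\varepsilon\| = t_\varepsilon^{\gamma}$. Passing to subsequences along which $\vx_\varepsilon/t_\varepsilon^{\gamma}$ converges on the unit sphere, Step~2 yields
\[
\log V(\vx_\varepsilon)\le (s^\alpha-o(1))\log V(s^{-\gamma}\vx_\varepsilon)\le (s^\alpha-o(1))\,s^{-\alpha}\log\varepsilon .
\]
Dividing by $\log\varepsilon<0$ gives $\liminf \log V(\vx_\varepsilon)/\log\varepsilon\ge 1$.

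The main obstacle is Step~2: establishing the large deviation limit \emph{uniformly} in the decision direction. A further difficulty in the same step is justifying that $V(s^{-\gamma}\vx_\varepsilon)\to0$ forces the $\gamma$-scale of $\vx_\varepsilon$ to diverge. Both hinge on the continuity of $g^*$ and on controlling the part of the integral near the boundary of $\{g^*>0\}$.
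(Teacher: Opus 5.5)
\textbf{Where the approaches differ, and the main gap.} You normalize differently from the paper. The paper fixes the $\varepsilon$-driven scale $u_\varepsilon=q^{\leftarrow}(\log(1/\varepsilon))$ and splits into cases according to the behaviour of $\vx_\varepsilon/u_\varepsilon^\gamma$. You instead self-normalize by $\|\vx_\varepsilon\|=t_\varepsilon^\gamma$ and compare $\vx_\varepsilon$ with $s^{-\gamma}\vx_\varepsilon$ along the same direction $\vd_\varepsilon$, at scales $t_\varepsilon$ and $t_\varepsilon/s$.

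Whenever $t_\varepsilon\to\infty$ this is sound. Step~2 is Proposition~\ref{Prop:LDP} applied along $\vd_\varepsilon\to\vd$, with $\vd\in\mathcal{X}^\infty_\gamma$ and $\|\vd\|=1$, so $0<I(\vd)<\infty$ by Lemma~\ref{Lem:UniformPostiiveRateFunction}. The single ratio $q(t_\varepsilon)/q(t_\varepsilon/s)\to s^\alpha$ then absorbs the paper's Cases (ia), (iia), (iib1) and the $\gamma<0$ part of (ib2). It also makes the mechanism explicit: scaling the decision by $s^{-\gamma}$ divides the large-deviation speed by $s^\alpha$.

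The gap is your claim that $V(s^{-\gamma}\vx_\varepsilon)\to0$ forces $t_\varepsilon\to\infty$. Concretely, you need to exclude bounded $\vx_\varepsilon$ when $\gamma>0$, and $\vx_\varepsilon\not\to\vzero$ when $\gamma<0$. You attribute this to (A3) and (A5) and then leave it open, but those two conditions cannot give it. For $\gamma<0$, a fixed nonzero $\hat{\vx}$ with $g(\hat{\vx},\cdot)\le0$ on $\supportset$ is compatible with both. What rules such limits out is (A4), reached through convexity of $g(\cdot,\vz)$, not through boundary control of the integral. This is exactly what the paper's Cases (ib1), (iib2), (iib3) do:
\begin{enumerate}
\item Lower semicontinuity of $g(\cdot,\vz)$ and Fatou turn $V(s^{-\gamma}\vx_\varepsilon)\to0$ into $\mathbb{P}_{\vxi}(\{\vz:g(\hat{\vx},\vz)>0\})=0$ at a nonzero point $\hat{\vx}$. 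Here $\hat{\vx}$ is a limit point of $s^{-\gamma}\vx_\varepsilon$, or, if $\|\vx_\varepsilon\|\to\infty$, $\hat{\vx}=t\bar{\vd}$ for any $t>0$, after writing $t\vd_\varepsilon$ as a convex combination of $\vzero$ and $s^{-\gamma}\vx_\varepsilon$.
\item Continuity in $\vz$ and the definition of the support upgrade this to $g(\hat{\vx},\cdot)\le0$ on $\supportset$.
\item Convexity and (A5) propagate the inequality along the segment toward $\vzero$.
\item (A1) along $u^\gamma\hat{\vx}$ then gives $g^*(\hat{\vx},\cdot)\le0$ on $\supportset$ with $\hat{\vx}\in\mathcal{X}^\infty_\gamma\setminus\{\vzero\}$, contradicting (A4).
\end{enumerate}
For $\gamma>0$ and bounded $\vx_\varepsilon$, the paper instead uses the LDP at $\vy_0=\vzero$, where $\rho>0$ gives $I(\vzero)=0$, and plays this against the scenario bound. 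In your framework, the same Fatou step at a limit point $\hat{\vx}$, followed by (A1) along the constant sequence $\vx_u\equiv\hat{\vx}$, gives $g^*(\vzero,\cdot)\le0$ on $\supportset$. This contradicts (A4): take $\vy_1,\vw_1$ with $g^*(\vy_1,\vw_1)>0$. The convex map $c\mapsto g^*(c\vy_1,\vw_1)$ is eventually negative as $c\to\infty$, by homogeneity and (A3). Hence it is nonincreasing, so $g^*(\vzero,\vw_1)\ge g^*(\vy_1,\vw_1)>0$.

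\textbf{The probabilistic step.} On Step~1 you are more careful than the paper. Its proof applies Theorem~\ref{Thm:ScenarioApproach} at each $\varepsilon_k$ of a sample-dependent subsequence and then reasons as if the bound held for all $\varepsilon$ on a single event of probability $1-\beta$. Your proposed repairs do not close this gap:
\begin{itemize}
\item Monotone optimal values say nothing about $A_\varepsilon$, because its threshold $\varepsilon^{1/s^\alpha}$ also moves.
\item Fixing a sequence does not make $A_{\varepsilon_k}$ hold eventually, since $\mathbb{P}^\infty_{\vxi}(A_{\varepsilon_k}^c)\le\beta$ is not summable.
\item Reverse Fatou only delivers the bound along a random subsequence.
\end{itemize}
A repair that works uses the fact that only the exponent matters. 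Let $C=\log(1/\beta)+n$, $\epsilon'_N=(2C/N)^{1-\delta}$ and $\beta'_N=e^{\,n-N\epsilon'_N/2}$. Then $N=\mathsf{N}(\epsilon'_N,\beta'_N)$, and both $\epsilon'_N$ and $\beta'_N$ lie in $(0,1)$ for large $N$. So Theorem~\ref{Thm:ScenarioApproach} bounds by $\beta'_N$ the probability that the $N$-sample scaled solution violates level $\epsilon'_N$, and $\sum_N\beta'_N<\infty$. Since $\varepsilon^{1/s^\alpha}\ge 2C/N$, Borel--Cantelli gives, almost surely, $V(s^{-\gamma}\vx_\varepsilon)\le\varepsilon^{(1-\delta)/s^\alpha}$ for all small $\varepsilon$. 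Running Step~3 with $(1-\delta)s^{-\alpha}$ in place of $s^{-\alpha}$ and letting $\delta\downarrow0$ along a sequence gives the claim, in fact with probability one.
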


The probability statement in Theorem~\ref{thm:LT_feasible} is defined on the space of infinite sequences of \ac{iid} samples drawn from $\mathbb{P}_{\vxi}$.
This guarantees that, with probability at least $1-\beta$, the stated asymptotic behavior~\eqref{eq:asymptotic_feasibility} holds for any realization of the infinite sample path.
While problem~\eqref{eq:s-sp} may become infeasible for certain sample realizations, Theorem~\ref{thm:LT_feasible} implicitly considers only cases where $\{\vx_\varepsilon\}$ is well-defined for sufficiently small $\varepsilon$. Otherwise, as with the classical \ac{sa}, no conclusion can be provided.

To build intuition, note that since $\varepsilon\in(0,1)$ we have $\log\varepsilon<0$, so the condition $\log V(\vx_\varepsilon)/\log\varepsilon\ge 1$ is equivalent to $V(\vx_\varepsilon)\le\varepsilon$.
Thus, the theorem states that the violation probability of the decision-scaled solution $\vx_\varepsilon$ eventually decays at least as fast as the prescribed risk level $\varepsilon$.
In other words, our method produces solutions that are asymptotically feasible for~\eqref{eqn:cc_basic} as the risk tolerance shrinks to zero.
More precisely, the limit inferior in~\eqref{eq:asymptotic_feasibility} implies that for any $\delta>0$, there exists $\varepsilon_0$ such that $\log V(\vx_\varepsilon)/\log\varepsilon\ge 1-\delta$ for all $\varepsilon\in(0,\varepsilon_0)$. Equivalently,
\begin{align}\label{eq:weakfeasibility}
    V(\vx_\varepsilon)\le \varepsilon^{1-\delta},\quad\forall\varepsilon<\varepsilon_0.
\end{align}
While the bound in~\eqref{eq:weakfeasibility} is slightly weaker than
$V(\vx_\varepsilon)\le\varepsilon$, 
the fact that $\delta$ can be made arbitrarily close to $0$
makes this difference negligible in practice.

The proof relies on establishing an \ac{ldp} for the violation probability $V(\vx_\varepsilon)$. 
The key step involves deriving a rate function that captures the exponential decay of this probability, determined by the tail index $\alpha$ of $\mathbb{P}_{\vxi}$ and the asymptotic structure of $g$.
The detailed theoretical development is provided in Section~\ref{secA1}.

An immediate consequence of Theorem~\ref{thm:LT_feasible} is the following polynomial reduction in sample complexity:
\begin{corollary}
Compared to the classical \ac{sa}, the decision-scaled \ac{sa} achieves a polynomial reduction in sample complexity from $O(\varepsilon^{-1})$ to $O(\varepsilon^{-1/s^{\alpha}})$ for obtaining a feasible solution to~\eqref{eqn:cc_basic} when $\varepsilon$ is sufficiently small.
\end{corollary}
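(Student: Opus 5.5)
The corollary follows from two facts: Theorem~\ref{thm:LT_feasible} gives the feasibility guarantee, and \eqref{eqn:scaled_N} gives the sample size. The plan is to verify the feasibility claim and then compare $\mathsf{N}(\varepsilon^{1/s^\alpha},\beta)$ with $\mathsf{N}(\varepsilon,\beta)$ from Theorem~\ref{Thm:ScenarioApproach}.

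\textbf{Feasibility.} Fix $s>1$ and $\beta\in(0,1)$, and run \eqref{eq:s-sp} with $N=\mathsf{N}(\varepsilon^{1/s^{\alpha}},\beta)$. By Theorem~\ref{thm:LT_feasible}, with $\mathbb{P}^\infty_{\vxi}$-probability at least $1-\beta$, the optimal solutions satisfy $\liminf_{\varepsilon\to0}\log V(\vx_\varepsilon)/\log\varepsilon\ge 1$. As explained after that theorem, this means that for every $\delta>0$ there is an $\varepsilon_0$ such that $V(\vx_\varepsilon)\le\varepsilon^{1-\delta}$ for all $\varepsilon<\varepsilon_0$. So \eqref{eq:s-sp} returns a solution that is feasible for \eqref{eqn:cc_basic} in the asymptotic sense of \eqref{eq:weakfeasibility} whenever $\varepsilon$ is sufficiently small.

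If exact feasibility at level $\varepsilon$ is wanted, one can apply the theorem at the slightly smaller level $\varepsilon'=\varepsilon^{1/(1-\delta)}$. The sample size then becomes $O(\varepsilon^{-1/((1-\delta)s^\alpha)})$, and since $\delta$ is arbitrary the polynomial exponent can be made as close as desired to $1/s^\alpha<1$. I would flag this step as the only delicate point. The $\varepsilon_0$ supplied by Theorem~\ref{thm:LT_feasible} depends on the sample path, so the guarantee holds only eventually in $\varepsilon$. This is why the corollary is stated for ``$\varepsilon$ sufficiently small''.

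\textbf{Sample complexity.} With $n$ and $\beta$ fixed,
\[
\mathsf{N}(\varepsilon,\beta)=\Theta(\varepsilon^{-1}),
\qquad
\mathsf{N}(\varepsilon^{1/s^\alpha},\beta)=\Theta(\varepsilon^{-1/s^\alpha}).
\]
Since $\alpha>0$ and $s>1$, we have $1/s^\alpha<1$, so the ratio of the two sample sizes is
\[
\frac{\mathsf{N}(\varepsilon^{1/s^\alpha},\beta)}{\mathsf{N}(\varepsilon,\beta)}=\Theta\bigl(\varepsilon^{1-1/s^\alpha}\bigr)\to0
\quad\text{as } \varepsilon\to0 .
\]
This is a polynomial reduction in the required sample size, which establishes the corollary.
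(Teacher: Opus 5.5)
Your proposal is correct and follows the paper's route: feasibility comes from Theorem~\ref{thm:LT_feasible}, in the asymptotic sense of \eqref{eq:weakfeasibility}, and the sample-complexity claim compares $\mathsf{N}(\varepsilon^{1/s^{\alpha}},\beta)$ with $\mathsf{N}(\varepsilon,\beta)$; the paper records this as $\log \mathsf{N}(\varepsilon^{s^{-\alpha}},\beta)/\log \mathsf{N}(\varepsilon,\beta)\to 1/s^{\alpha}$, which your $\Theta$-bounds imply. Your re-parametrization $\varepsilon'=\varepsilon^{1/(1-\delta)}$, used to get exact feasibility at level $\varepsilon$, is a valid refinement the paper does not carry out, and the small loss in the exponent can even be removed by running the method with a slightly larger scaling parameter $\tilde{s}$ satisfying $\tilde{s}^{\alpha}(1-\delta)=s^{\alpha}$.
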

This corollary follows directly from the asymptotic relationship:
\begin{equation}
\lim_{\varepsilon \to 0} \frac{\log \mathsf{N}(\varepsilon^{s^{-\alpha}}, \beta)}{\log \mathsf{N}(\varepsilon, \beta)} = \frac{1}{s^\alpha}.
\end{equation}
The reduction in sample complexity translates directly to computational savings. 
In the \ac{sa} framework, the number of samples equals the number of constraints in the optimization problem, thus a reduction in sample size by a factor of $s^\alpha$
yields proportional computational time reduction.
Consider the example from Section~\ref{Scenario_approach} ($\varepsilon=10^{-3},\beta=0.01,n=1$).
As noted previously, the classical \ac{sa} requires $N\ge 11,211$ samples. In contrast, our decision-scaled approach with $s=1.2$ for $\alpha=2$  requires only $N\ge 1,359$ samples, representing an eight-fold reduction.
Moreover, this efficiency gain is particularly valuable in applications where samples are limited or computationally expensive to generate, such as sampling from high-dimensional distributions.

While Theorem~\ref{thm:LT_feasible} holds for all $s\ge1$, the choice of $s$ introduces a trade-off.
Our empirical findings in Section~\ref{Sec:NumExp} show that increasing $s$ yields more conservative solutions with larger feasibility margins, although detailed analysis of this observation is beyond the scope of this paper and will be investigated in future work.
This trade-off between computational efficiency and solution quality provides flexibility.
Practitioners can select larger scaling factors $s$ for real-time applications or limited computational resources.
This tunability enables solving previously intractable problems by adjusting $s$ to match available computational budget.

\section{Verification of Regularity Conditions and Examples}\label{Sec:VerifyingConditions}

The aforementioned theoretical guarantees rely on Assumption~\ref{assum:conti-conv}, which requires the constraint function to exhibit a certain asymptotic homogeneity. %
In this section, we develop a systematic and practical procedure to verify this assumption.
We first focus on the broad class of \emph{algebraic} constraint functions; that is, functions expressible as finite sums of monomial terms in the decision and uncertainty vectors, which encompasses polynomials, posynomials, and signomials commonly arising in practice.
For this class, we show that %
verifying the required assumption %
is relatively straightforward.
We then extend it to handle non-algebraic constraint functions as well as joint chance constraints.
Throughout, we illustrate the framework through concrete examples.

\subsection{Verification of Assumption~\ref{assum:conti-conv} for Algebraic Functions}

Let $\va=(a_1,\ldots,a_n)\in\mathbb{R}^n$ and $\vb=(b_1,\ldots,b_n)\in\mathbb{R}^m$ denote real exponents.
We adopt the standard multi-index notation
\begin{align}
    \vx^{\va} \coloneqq \prod_{i=1}^n x_i^{a_i}
    \quad\text{and}\quad
    \vz^{\vb} \coloneqq \prod_{i=1}^m z_i^{b_i}.
\end{align}
We consider constraint functions of the form
\begin{align}\label{eq:algebraic_function}
g(\vx,\vz)=\sum_{(\va,\vb)\in\mathcal{J}}C_{\va,\vb}\vx^{\va}\vz^{\vb},
\end{align}
where $\mathcal{J}\coloneqq\{(\va,\vb)\in\mathbb{R}^n\times\mathbb{R}^m\colon C_{\va,\vb}\neq 0\}$ is a finite index set with coefficients $C_{\va,\vb}\in\mathbb{R}$.
Here, the function $g$ in~\eqref{eq:algebraic_function} is defined on a domain $\mathcal{X}\times\supportset$ such that every term $\vx^{\va}\vz^{\vb}$ is well-defined and real-valued for all $(\vx,\vz)\in\mathcal{X}\times\supportset$.
This class encompasses polynomials, posynomials, and signomials.

In Assumption~\ref{assum:conti-conv},
verifying condition~\ref{Eq:Conti-Convergence} is the most challenging.
The remaining conditions~\labelcref{Assum:NonEmptyHorizon,eq:away_from_0,eq:nonempty_set_g^*,assum:trivial_safety_at_0} are typically straightforward to check once scaling exponents $(\gamma,\rho)$ and the limit function $g^*$ are identified.
To make~\ref{Eq:Conti-Convergence} more accessible, we decompose its verification into two steps:
\begin{enumerate}[label=(\roman*),leftmargin=*]
    \item Identify the scaling exponents $(\gamma,\rho)$ and the limit function $g^*$;
    \item Verify that the convergence in~\ref{Eq:Conti-Convergence} holds.
\end{enumerate}

\medskip
\noindent
\textbf{Part (i)}.
The key observation is that condition~\ref{Eq:Conti-Convergence} implies a specific asymptotic structure of the constraint function $g$.
To systematically identify the scaling exponents $(\gamma,\rho)$ and the limit function $g^*$,
we examine the canonical choice of sequences $\vx_u=u^\gamma\vy$ and $\vz_u=u\vw$ for $(\vy,\vw)\in\mathcal{X}^\infty_\gamma\times\supportset$.
This heuristic reduces limit~\eqref{eq:direct_conti_conv} to the following form:
\begin{align}\label{eq:canonical_limit}
    \lim_{u\to\infty}\frac{g(u^\gamma\vy,u\vw)}{u^\rho}=g^*(\vy,\vw).
\end{align}
Substituting $\vx=u^\gamma\vy$ and $\vz=u\vw$  in~\eqref{eq:algebraic_function} yields
\begin{align}
    g(u^\gamma\vy, u\vw)=\sum_{(\va,\vb)\in\mathcal{J}}u^{p_{\va,\vb}(\gamma)}C_{\va,\vb}\vy^{\va}\vw^{\vb},
\end{align}
where the exponent $p_{\va,\vb}(\gamma)$ is defined as
\begin{align}\label{eq:exponent_function}
    p_{\va,\vb}(\gamma)\coloneqq\gamma\vone^\top\va+\vone^\top\vb.
\end{align}

For the limit function $g^*$ to be well-defined (i.e., finite) and non-trivial (i.e., not identically zero), the scaling exponents $\rho\ge0$ must equal the maximum scaling rate among all terms, i.e., $\rho=\max_{(\va,\vb)\in\mathcal{J}}p_{\va,\vb}(\gamma)$.
The limit function $g^*$ then consists of those terms achieving this maximum:
\begin{align}
    g^*(\vy,\vw)=\sum_{(\va,\vb)\in\mathcal{J},p_{\va,\vb}(\gamma)=\rho}C_{\va,\vb}\vy^{\va}\vw^{\vb}.
\end{align}
The value of $\gamma$ is determined by requiring that $g^*$ satisfies conditions~\labelcref{eq:away_from_0,eq:nonempty_set_g^*}.
Specifically, $\gamma$ must be chosen such that at least two terms achieve the maximum scaling rate, which is necessary to simultaneously satisfy conditions~\labelcref{eq:away_from_0} and~\labelcref{eq:nonempty_set_g^*}\footnote{If a single term dominates, then $g^*$ becomes a monomial. In this case, if the exponent of $\vw$ is non-zero, then $g^*(\vy,\vzero)=0$ which violates~\labelcref{eq:away_from_0}; otherwise, the exponent of $\vw$ is zero and $g^*$ becomes independent of $\vw$, failing to simultaneously satisfy both the strictly negative condition of~\labelcref{eq:away_from_0} and the strictly positive condition of~\labelcref{eq:nonempty_set_g^*}.}.

Algorithm~\ref{alg:find_scaling_exponents} systematically identifies the unique pair $(\gamma,\rho)$ and the corresponding limit function $g^*$.
The first part of the algorithm enumerates candidate values of $\gamma$ that equalize the scaling rates of distinct terms.
It then immediately terminates on line~\ref{alg:earlystop} after finding the first candidate that satisfies the conditions of Assumption~\ref{assum:conti-conv}, since its uniqueness is guaranteed by Proposition~\ref{Lem:UniqueScalingExponents}.
If no candidate satisfies all requirements, the algorithm returns \texttt{None}, indicating that the constraint function $g$ does not admit the structure required by Assumption~\ref{assum:conti-conv};
see Remark~\ref{Remark:notsatisfyingAssumption} for further discussion.
\begin{algorithm}
    \caption{Identification of $(\gamma,\rho)$ and $g^*$ for algebraic functions}
    \label{alg:find_scaling_exponents}
    \begin{algorithmic}[1]  
        \Require The finite index set $\mathcal{J}$ and coefficients $C_{\va,\vb}$ defining $g$ as in~\eqref{eq:algebraic_function}.
        \Ensure  $(\gamma,\rho)$ and $g^*$, or \texttt{None}.
        \Statex
        
        \State Initialize a candidate set of $\gamma$, $\Gamma\leftarrow\emptyset$.
        \ForAll{$\{(\va,\vb), (\va',\vb')\} \subset \mathcal{J}$}\label{alg:calculate_gamma1}
        \If{$\mathbf{1}^\top\va \neq \mathbf{1}^\top\va'$}\label{alg:calculate_gamma2}
            \State Solve for $\gamma \leftarrow \frac{\mathbf{1}^\top(\vb' - \vb)}{\mathbf{1}^\top(\va - \va')}$.\label{alg:calculate_gamma3} 
            \State $\Gamma\leftarrow\Gamma\cup\{\gamma\}$\label{alg:calculate_gamma4}
        \EndIf\label{alg:calculate_gamma5}
        \EndFor\label{alg:calculate_gamma6}

        \ForAll{$\gamma\in\Gamma$} 
        \If{$\gamma = 0$} 
        \State \textbf{continue}
        \EndIf
        \State $\rho \leftarrow \max_{(\va,\vb) \in \mathcal{J}} \{p_{\va,\vb}(\gamma)\}$\label{alg:rho_is_max}
        
        \If{$\rho < 0$} 
        \State \textbf{continue}
        \EndIf
        
        \State $\mathcal{J}^* \leftarrow \{(\va,\vb) \in \mathcal{J} : p_{\va,\vb}(\gamma) = \rho\}$ 
        \If{$|\mathcal{J}^*|\ge 2$} \label{alg:line:check_balance}
        \State $g^*(\vy, \vw) \leftarrow \sum_{(\va,\vb) \in \mathcal{J}^*} C_{\va\vb} \vy^{\va} \vw^{\vb}$.
            \If{$g^* \colon \mathcal{X}^\infty_\gamma \times \supportset \to \mathbb{R}$ is continuous; $\mathcal{X}^\infty_\gamma$ satisfies~\labelcref{Assum:NonEmptyHorizon}; $g^*$ satisfies~\labelcref{eq:away_from_0,eq:nonempty_set_g^*}; $g$ satisfies~\labelcref{assum:trivial_safety_at_0}} 
            \State \Return $(\gamma, \rho), g^*$.\label{alg:earlystop}
            \EndIf
        \EndIf
        \EndFor
        \State\Return \texttt{None}.
    \end{algorithmic}
\end{algorithm}

\begin{remark}\label{Remark:notsatisfyingAssumption}
Not all constraint functions of the form~\eqref{eq:algebraic_function} satisfy Assumption~\ref{assum:conti-conv}. 
This assumption characterizes problems that admit a well-defined asymptotic structure in which a unique scaling relationship governs the dominant terms.
Consider $g(x,z) = xz - x + z - 1$.
If $\gamma>0$, then only the term $xz$ dominates with scaling exponent ${\gamma+1}$, whereas if $\gamma<0$, then only the term $z$ dominates with scaling exponent $1$.
Since no nonzero value of $\gamma$ yields a balance of two or more terms, Algorithm~\ref{alg:find_scaling_exponents} returns \texttt{None}, indicating that Assumption~\ref{assum:conti-conv} is not satisfied by this constraint function.
\end{remark}

\begin{remark}
The sequence $\vx_u=u^\gamma\vy$ implicitly employed in Algorithm~\ref{alg:find_scaling_exponents} need not lie within the feasible set $\mathcal{X}$ for all $u$.
However, based on the observation that the scaling exponents $(\gamma, \rho)$ and the limit function $g^*$ characterize the asymptotic structure of the function $g$ itself, Algorithm~\ref{alg:find_scaling_exponents} serves as a heuristic to identify these parameters.
The validity with respect to all other feasible sequences is discussed below.
\end{remark}

\medskip
\noindent
\textbf{Part (ii)}.
Algorithm~\ref{alg:find_scaling_exponents} identifies the scaling exponents $(\gamma,\rho)$ and the continuous limit function $g^*$ by verifying that the limit in~\eqref{eq:direct_conti_conv} holds for the canonical sequences $\vx_u=u^\gamma\vy$ and $\vz_u=u\vw$ for every $(\vy,\vw)\in\mathcal{X}^\infty_\gamma\times\supportset$.
However, condition~\ref{Eq:Conti-Convergence} requires that the convergence in~\eqref{eq:direct_conti_conv} holds for \textit{every} pair of sequences $\{\vx_u\}\subset\mathcal{X}$ and $\{\vz_u\}\subset\supportset$ satisfying~\eqref{eq:relation_x_and_y}, not just the canonical choice. 

While verifying this requirement for all such sequences is generally impractical,
we demonstrate that for the class of algebraic functions defined in~\eqref{eq:algebraic_function}, this strong convergence property is inherent.
The following proposition establishes that the successful identification of $(\gamma,\rho,g^*)$ via Algorithm~\ref{alg:find_scaling_exponents} is sufficient to guarantee the convergence required by~\ref{Eq:Conti-Convergence}.

\begin{proposition}\label{prop:SufficientConditionsOfContiConv}
Suppose the constraint function $g(\vx, \vz)$ is of the form~\eqref{eq:algebraic_function}.
If Algorithm~\ref{alg:find_scaling_exponents} returns a tuple $(\gamma, \rho, g^*)$, then Assumption~\ref{assum:conti-conv} is satisfied. 
\end{proposition}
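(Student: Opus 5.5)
Observe first that when Algorithm~\ref{alg:find_scaling_exponents} returns $(\gamma,\rho,g^*)$, it has already checked everything in Assumption~\ref{assum:conti-conv} except \ref{Eq:Conti-Convergence}. Indeed, before line~\ref{alg:earlystop} it tests continuity of $g^*$ on $\mathcal{X}^\infty_\gamma\times\supportset$, conditions \labelcref{Assum:NonEmptyHorizon,eq:away_from_0,eq:nonempty_set_g^*}, and \labelcref{assum:trivial_safety_at_0}. It also guarantees $\gamma\neq0$ and $\rho\ge0$. So the proof reduces to one claim: for every $\vy\in\mathcal{X}^\infty_\gamma$, $\vw\in\supportset$, and sequences $\vx_u\in\mathcal{X}$, $\vz_u\in\supportset$ with $\vx_u/u^\gamma\to\vy$ and $\vz_u/u\to\vw$, the limit \eqref{eq:direct_conti_conv} holds.

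The plan is to work term by term. Set $\tilde\vx_u\coloneqq\vx_u/u^\gamma$ and $\tilde\vz_u\coloneqq\vz_u/u$. Because monomials with real exponents are multiplicatively homogeneous, each term factors as
\[
C_{\va,\vb}\vx_u^{\va}\vz_u^{\vb}=u^{p_{\va,\vb}(\gamma)}\,C_{\va,\vb}\tilde\vx_u^{\va}\tilde\vz_u^{\vb}.
\]
The sign and branch conventions here are those under which the terms are well-defined and real on the domain; this is where domain issues must be tracked. Dividing by $u^\rho$ gives
\[
\frac{g(\vx_u,\vz_u)}{u^\rho}=\sum_{(\va,\vb)\in\mathcal{J}}u^{p_{\va,\vb}(\gamma)-\rho}\,C_{\va,\vb}\tilde\vx_u^{\va}\tilde\vz_u^{\vb}.
\]
By line~\ref{alg:rho_is_max}, every exponent $p_{\va,\vb}(\gamma)-\rho$ is at most $0$, with equality exactly on $\mathcal{J}^*$. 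For terms in $\mathcal{J}^*$, the factor $\tilde\vx_u^{\va}\tilde\vz_u^{\vb}$ should converge to $\vy^{\va}\vw^{\vb}$, and summing these gives $g^*(\vy,\vw)$. For terms outside $\mathcal{J}^*$, the prefactor $u^{p-\rho}$ tends to $0$, so it is enough to show that the accompanying monomial stays bounded along the sequence.

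The main obstacle is continuity of the individual monomials at the limit point when some exponents are negative or fractional and some coordinates of $\vy$ or $\vw$ are zero. There $\tilde\vx_u^{\va}\tilde\vz_u^{\vb}$ can blow up or fail to converge. The first attempt would use the continuity check the algorithm performs: $g^*$ is continuous as a map into $\mathbb{R}$ on $\mathcal{X}^\infty_\gamma\times\supportset$, so it is finite everywhere. One then argues that any coordinate raised to a negative power in a dominant term must be bounded away from zero on the relevant cone. If that is not enough, I would restrict, as the paper's definition implicitly does, to the case where every term is well-defined on the closures involved. The standing requirement that each term is real-valued and well-defined on $\mathcal{X}\times\supportset$ should carry over to the scaled limits, since $\mathcal{X}^\infty_\gamma$ and $\supportset$ are the limiting sets. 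Under that requirement, each monomial is continuous at $(\vy,\vw)$, and hence locally bounded there. This yields both the convergence of the dominant terms and the boundedness of the subdominant terms, and the limit is $g^*(\vy,\vw)$.

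Combining these, \ref{Eq:Conti-Convergence} holds with the returned $(\gamma,\rho,g^*)$. Together with the conditions already checked by the algorithm, this establishes Assumption~\ref{assum:conti-conv}.
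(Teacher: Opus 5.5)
Your route is the paper's: substitute $\vx_u=u^\gamma\vy_u$ and $\vz_u=u\vw_u$, split $\mathcal{J}$ into $\mathcal{J}^*$ and its complement, and let the algorithm's checks supply the remaining conditions. You correctly identify continuity of the individual monomials as the crux. The paper's proof passes over this point with the single sentence ``power functions are continuous'', so it has the same hole.

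Your resolution does not close it. You claim that well-definedness of every term on $\mathcal{X}\times\supportset$ carries over to $\mathcal{X}^\infty_\gamma\times\supportset$. This is false when $\gamma>0$ and some decision exponent is negative: $\mathcal{X}$ can stay away from a coordinate hyperplane while its horizon cone, which always contains $\vzero$, does not. The algorithm's continuity check on $g^*$ does not help either. It constrains only the terms in $\mathcal{J}^*$ and says nothing about the inactive ones, which is exactly where your boundedness requirement breaks.

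Here is a concrete counterexample. Take $n=m=1$, $\mathcal{X}=[1,\infty)$, $\supportset=[0,\infty)$ and $g(x,z)=-x+z+z/x$.
\begin{itemize}
\item This $g$ is closed and convex in $x$, continuous in $z$, and every term is well-defined on $\mathcal{X}\times\supportset$.
\item The candidate set is $\Gamma=\{1,\tfrac12,0\}$. The value $\gamma=\tfrac12$ leaves $|\mathcal{J}^*|=1$ and is skipped.
\item The value $\gamma=1$ gives $\rho=1$ and $g^*(y,w)=-y+w$ on $[0,\infty)^2$, which passes every check. So the algorithm returns $(1,1,-y+w)$.
\item Now take $x_u\equiv 1$ and $z_u=u$. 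Then $x_u/u\to 0\in\mathcal{X}^\infty_1$ and $z_u/u\to 1$, yet $g(x_u,z_u)/u=(2u-1)/u\to 2\neq g^*(0,1)=1$.
\item The culprit is the inactive term $u^{-1}\,y_u^{-1}w_u=w_u/x_u\equiv 1$. It does not vanish because $y_u^{-1}=u$ is unbounded.
\end{itemize}
So \ref{Eq:Conti-Convergence} fails, and the proposition is false as stated. No argument can close this step without an extra hypothesis.

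The natural extra hypothesis is that all exponents are nonnegative.
\begin{itemize}
\item This is automatic for $\vb$, because $\supportset$ is a closed cone and hence contains $\vzero$.
\item It is automatic for $\va$ when $\gamma<0$, because then $\vzero\in\mathcal{X}$.
\item So it only excludes negative decision exponents when $\gamma>0$.
\end{itemize}
Under this hypothesis each monomial is continuous on a closed domain that contains the limits $(\vy,\vw)$, and the inactive monomials stay bounded. Your argument, which is the paper's argument, then goes through.
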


\paragraph{Illustrative Examples.} Below, we illustrate that Assumption~\ref{assum:conti-conv} encompasses 
representative instances from diverse classes of constraint functions common in \ac{cco}.
These classes include linear, quadratic~\citep{lejeune2016solving}, polynomial~\citep{jasour2015semidefinite, lasserre2021distributionally} and posynomial constraints found in geometric programming~\citep{liu2022distributionally,fontem2023robust}. 

\begin{example}[Linear Constraint]
Consider the function $g(\vx,\vz)=-\va^\top \vx+\vb^\top \vz+\ell$ defined on the feasible region $\mathcal{X}=\mathbb{R}^n_{\ge0}$ and support set $\supportset=\mathbb{R}^m_{\ge0}$.
With strictly positive coefficient vectors $\va\in\mathbb{R}^n_{>0}$ and $\vb\in\mathbb{R}^m_{>0}$, this function satisfies Assumption~\ref{assum:conti-conv} with exponents $(\gamma,\rho)=(1,1)$, yielding the limit function $g^*(\vy,\vw)=-\va^\top \vy+\vb^\top \vw$. This structure arises in resource allocation~\citep{luedtke2008sample} and inventory management~\citep{zhang2023new} problems.
\end{example}

\begin{example}[Bilinear Constraint]
Let $g(\vx,\vz)=\vx^\top A \vz - \ell$ where $A\in\mathbb{R}^{n\times m}$ is a non-zero matrix and $\ell > 0$ is a scalar coefficient. 
We define the support set as $\supportset=\mathbb{R}^m$ and the feasible set $\mathcal{X} \subset \mathbb{R}^n$ as a compact set containing the origin.
This function satisfies Assumption~\ref{assum:conti-conv} with scaling exponents $(\gamma,\rho)=(-1,0)$ and the limit function $g^*(\vy,\vw)=\vy^\top A \vw - \ell$. This formulation frequently appears in Value-at-Risk constraints~\citep{blanchet2024efficient} and robust classification~\citep{wang2018robust}.
\end{example}

\begin{example}[Quadratic Constraint]
Consider the quadratic form $g(\vx,\vz)=-\vx^\top A\vx + \vz^\top B\vz + \ell$ defined on $\mathcal{X}=\mathbb{R}^n_{\ge0}$ and $\supportset=\mathbb{R}^m_{\ge0}$. Assuming $A\in\mathbb{R}^{n\times n}$ is positive definite and $B\in\mathbb{R}^{m\times m}$ is positive semidefinite, this function satisfies Assumption~\ref{assum:conti-conv} with $(\gamma,\rho)=(1,2)$. The corresponding limit function is $g^*(\vy,\vw)=-\vy^\top A\vy + \vw^\top B\vw$. Applications include portfolio optimization~\citep{pagnoncelli2009sample} and power system planning~\citep{qiu2014chance}.
\end{example}

\begin{example}[High-order Polynomial Constraint]
    Consider a constraint with high-order nonlinearities defined by $g(\vx,\vz) = \sum_{i=1}^n \vz_i^2 \vx_i^3 - \ell$,
    with $\ell> 0$. 
    Let $\mathcal{X} \subset \mathbb{R}^n_{\ge 0}$ be a compact set containing the origin and $\supportset = \mathbb{R}^n$. 
    This function satisfies Assumption~\ref{assum:conti-conv} with exponents $(\gamma,\rho)=(-2/3,0)$, resulting in the limit function $g^*(\vy,\vw)= \sum_{i=1}^n \vw_i^2 \vy_i^3 - \ell$.
\end{example}

While the aforementioned examples are often directly amenable to optimization, the functions may sometimes be non-convex. 
The following result provides a way to address this issue while preserving the scaling structure.

\begin{proposition}[Invariance to multiplication]\label{Lem:AlgebraicalEquivalence}
Suppose there exists a function $h:\mathcal{X}\times\supportset\to\mathbb{R}_{>0}$ that satisfies condition~\ref{Eq:Conti-Convergence} with $(\gamma,\rho_h)$ and a strict positive limit function $h^*(\vy,\vw)$.
Then, for any function $f:\mathcal{X}\times\supportset\to\mathbb{R}$
satisfying Assumption~\ref{assum:conti-conv} with $(\gamma,\rho)$ and limit function $f^*(\vy,\vw)$,
the product
\begin{align}
    g(\vx,\vz)\coloneqq h(\vx,\vz)f(\vx,\vz) 
\end{align}
satisfies Assumption~\ref{assum:conti-conv} with $(\gamma,\rho+\rho_h)$ and limit function 
\begin{align*}
    g^*(\vy,\vw)\coloneqq h^*(\vy,\vw)f^*(\vy,\vw).
\end{align*}
\end{proposition}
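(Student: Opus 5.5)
The plan is to check the five conditions \ref{Eq:Conti-Convergence}--\ref{assum:trivial_safety_at_0} one at a time for $g=hf$ with exponents $(\gamma,\rho+\rho_h)$ and candidate limit $g^*=h^*f^*$. Since $f$ and $h$ share the same $\gamma$, the asymptotic cone $\mathcal{X}^\infty_\gamma$ is the same object for both. Condition \ref{Assum:NonEmptyHorizon} depends only on $\mathcal{X}$ and $\gamma$, so it is inherited directly from $f$.

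\textbf{Convergence \ref{Eq:Conti-Convergence}.} Fix $\vy\in\mathcal{X}^\infty_\gamma$, $\vw\in\supportset$, and sequences $\{\vx_u\}\subset\mathcal{X}$, $\{\vz_u\}\subset\supportset$ satisfying \eqref{eq:relation_x_and_y}. Then
\begin{align*}
\frac{g(\vx_u,\vz_u)}{u^{\rho+\rho_h}}=\frac{h(\vx_u,\vz_u)}{u^{\rho_h}}\cdot\frac{f(\vx_u,\vz_u)}{u^{\rho}}.
\end{align*}
By hypothesis each factor converges, to $h^*(\vy,\vw)$ and $f^*(\vy,\vw)$ respectively. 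Both limits are finite real numbers, so the product converges to $h^*(\vy,\vw)f^*(\vy,\vw)$. The limit $g^*=h^*f^*$ is continuous on $\mathcal{X}^\infty_\gamma\times\supportset$ as a product of continuous functions. We also need $\rho+\rho_h\ge0$. This holds because $\rho\ge0$ by Assumption~\ref{assum:conti-conv} and $\rho_h\ge0$ because $h$ satisfies \ref{Eq:Conti-Convergence}, which includes the requirement $\rho_h\ge 0$.

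\textbf{Sign conditions.} Here we use strict positivity of $h^*$, interpreted as $h^*>0$ on all of $\mathcal{X}^\infty_\gamma\times\supportset$, in particular at $\vw=\vzero$. Multiplying by a strictly positive number preserves strict signs. Hence for $\vy\in\mathcal{X}^\infty_\gamma\setminus\{\vzero\}$ we get $g^*(\vy,\vzero)=h^*(\vy,\vzero)f^*(\vy,\vzero)<0$, which is \ref{eq:away_from_0}. For \ref{eq:nonempty_set_g^*}, the sets coincide:
\begin{align*}
\{\vw\in\supportset:g^*(\vy,\vw)>0\}=\{\vw\in\supportset:f^*(\vy,\vw)>0\}\neq\emptyset.
\end{align*}
For \ref{assum:trivial_safety_at_0}, suppose $\gamma<0$. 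Then $f(\vzero,\vz)\le0$ and $h(\vzero,\vz)>0$, since $h$ maps into $\mathbb{R}_{>0}$, so $g(\vzero,\vz)\le 0$ for all $\vz\in\supportset$.

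\textbf{Expected obstacle.} The delicate point is not the limits but the standing structural requirements that Assumption~\ref{assum:conti-conv} implicitly inherits from the problem setup. In particular, the paper requires $g(\cdot,\vz)$ to be closed and convex and $g(\vx,\cdot)$ to be continuous. Continuity and measurability in $\vz$ follow if $h$ has them, which I would take as part of the standing hypotheses on $h$. Convexity of a product, however, generally fails. Since the proposition is framed as a way to \emph{transfer} the scaling structure (the motivation being that one may rewrite a constraint $f\le0$ equivalently as $hf\le0$), I would state that the claim concerns conditions \ref{Eq:Conti-Convergence}--\ref{assum:trivial_safety_at_0} only. I would also note that, because $h>0$, the feasibility sets $\{g\le0\}$ and $\{f\le0\}$ coincide, so the violation probability $V$ is unchanged. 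I would also remark that the positivity of $h^*$ in the direction $\vw=\vzero$ is exactly what is needed for \ref{eq:away_from_0}.
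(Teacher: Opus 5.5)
Your proposal is correct and matches the paper's proof: you check \ref{Eq:Conti-Convergence} by multiplying the two limits, inherit \ref{Assum:NonEmptyHorizon} from the shared $\gamma$, and get \ref{eq:away_from_0}--\ref{assum:trivial_safety_at_0} because multiplying by $h^*>0$ or $h>0$ preserves signs. Two of your remarks are implicit in the paper but consistent with how it uses the result in its example (where $f$ itself is nonconvex): the check that $\rho+\rho_h\ge 0$, and the observation that the claim concerns only \ref{Eq:Conti-Convergence}--\ref{assum:trivial_safety_at_0}, not convexity of $g(\cdot,\vz)$.
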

Proposition~\ref{Lem:AlgebraicalEquivalence} implies that the parameter $\gamma$ is invariant under multiplication by some strictly positive function $h(\vx, \vz)$.
Since the decision-scaled~\eqref{eq:s-sp} depends exclusively on $\gamma$, its formulation remains identical under such positive multiplicative transformations of the constraint function.

The practical implication of this result is significant.
Since $h$ is strictly positive, the feasible regions $\{\vz:f(\vx,\vz)>0\}$ and $\{\vz:g(\vx,\vz)>0\}$ coincide.
Therefore, we can work with $g$ instead of $f$ by choosing an appropriate $h$ that ensures convexity or other desired properties.
Notably, $h$ need only satisfy condition~\ref{Eq:Conti-Convergence} of Assumption~\ref{assum:conti-conv}, not the entire assumption, which provides additional flexibility in implementation.

\begin{example}[Illustration of Proposition~\ref{Lem:AlgebraicalEquivalence}]
Consider the chance constraint
\begin{align*}
    \mathbb{P}_{\vxi}(\{\vz:f(\vx,\vz)\le0\})\ge 1-\varepsilon,
\end{align*}
where the constraint function 
\begin{align}
    f(\vx,\vz)=\frac{\vx^\top\vz-1}{1+\|\vx\|^2}
\end{align}
is defined on $\mathcal{X}=\mathcal{B}_{R}$ for some $R>0$ and $\supportset=\mathbb{R}^n$.
This function satisfies Assumption~~\ref{assum:conti-conv} with $(\gamma,\rho)=(-1,0)$ and limit function $f^*(\vy,\vw)=\vy^\top\vw-1$. 

However, the mapping $\vx\mapsto f(\vx,\vz)$ is nonconvex, violating our preliminary conditions and posing computational challenges.
To address this, multiply by
\begin{align}
    h(\vx,\vz)=1+\|\vx\|^2,
\end{align}
which is strictly positive on $\mathcal{X}$ and satisfies condition~\ref{Eq:Conti-Convergence} with
$(\gamma,\rho_h)=(-1,0)$ and $h^*(\vy,\vw)=1$.
Note that since only~\ref{Eq:Conti-Convergence} is required, this choice is not unique and can be adjusted as needed.
Then, 
\begin{align}  g(\vx,\vz)=f(\vx,\vz)h(\vx,\vz)=\vx^\top\vz-1
\end{align}
satisfies Assumption~\ref{assum:conti-conv} with $(\gamma,\rho+\rho_h)=(-1,0)$ and limit function $g^*=f^*$.
Crucially, the multiplication induces convexity of $g$ in $\vx$ while preserving $\gamma$.
\end{example}

\subsection{Extension via Constraint Decomposition}

We now extend our framework to constraint functions that do not strictly adhere to the algebraic form in \eqref{eq:algebraic_function}, such as those involving trigonometric, logarithmic, and exponential terms.
This is achieved via a \emph{decomposition} that separates a dominant algebraic component from an asymptotically negligible residual.
Specifically, consider a function decomposed as
\begin{align}\label{eq:decomposition}
    g(\vx,\vz)=g_0(\vx,\vz)+h(\vx,\vz).
\end{align}
Here, $g_0$ represents a dominant term satisfying the algebraic structure in~\eqref{eq:algebraic_function}.  
We assume that applying Algorithm~\ref{alg:find_scaling_exponents} to $g_0$ yields exponents $(\gamma, \rho)$ and limit function $g_0^*$, thereby satisfying Assumption~\ref{assum:conti-conv}.

The term $h$ represents a residual component that is \emph{asymptotically negligible} relative to the scaling of $g_0$. 
Formally, we require that for any $\vy \in \mathcal{X}_\gamma^\infty$ and $\vw \in \supportset$, and for any sequences $\{\vx_u\}\subset\mathcal{X}$ and $\{\vz_u\}\subset\supportset$ such that $\lim_{u\to\infty}\vx_u/u^\gamma=\vy$ and $\lim_{u\to\infty}\vz_u/u=\vw$, 
\begin{align}\label{eq:h_decay}
\lim_{u\to\infty}\frac{h(\vx_u, \vz_u)}{u^\rho} = 0.
\end{align}
Under this condition, the limit of the original function $g$ is determined solely by the dominant term:
\begin{align}
\lim_{u\to\infty}\frac{g(\vx_u, \vz_u)}{u^\rho}
= \lim_{u\to\infty}\frac{g_0(\vx_u, \vz_u)}{u^\rho} + \lim_{u\to\infty}\frac{h(\vx_u, \vz_u)}{u^\rho} %
= g_0^*(\vy, \vw).
\end{align}
Consequently, $g$ satisfies Assumption~\ref{assum:conti-conv} with the same parameters $(\gamma, \rho)$ and limit function $g^* = g_0^*$.
This implies that the applicability of our framework relies on the constraint's asymptotic behavior rather than its strict algebraic form.
Thus, for non-algebraic functions, we can verify applicability by examining only the dominant algebraic component $g_0$. 
We illustrate this through the following examples.
\begin{example}\label{ex:lse}
    Consider the function defined on $\mathcal{X}=\mathbb{R}^n_{\ge0}$ and $\supportset=\mathbb{R}^n_{\ge0}$:
    \begin{align}
        g(\vx,\vz)=\log\left(\sum_{i=1}^n \exp(-x_i+z_i)\right),
    \end{align}
    where $\vx=(x_1,\ldots,x_n)^\top$ and $\vz=(z_1,\ldots,z_n)^\top$.
    It can be decomposed into
    \begin{align}
        g(\vx,\vz)=\underbrace{\max_{i=1,\ldots,n}(-x_i+z_i)}_{=g_0(\vx,\vz)}
        +
        \underbrace{\log\left(\sum_{i=1}^n\exp\left(-x_i+z_i-\max_{i=1,\ldots,n}(-x_i+z_i)\right)\right)}_{h(\vx,\vz)}
    \end{align}
    The dominant term $g_0$ satisfies Assumption~\ref{assum:conti-conv} with $(\gamma,\rho)=(1,1)$ and limit function $g^*_0(\vy,\vw)=\max_{i=1,\ldots,n}(-y_i+w_i)$.
    Since $0\le h(\vx,\vz)\le\log n$,
    the residual $h(\vx,\vz)$ is asymptotically negligible.
    Therefore, $g$ satisfies Assumption~\ref{assum:conti-conv} with $(\gamma,\rho)=(1,1)$ and $g^*=g^*_0$.
    Constraint functions of this type, known as log-sum-exp functions, appear in control~\citep{chiang2017power} and structural system reliability~\citep{aoues2010benchmark}.
\end{example}

\begin{example}
    Consider a constraint function involving trigonometric, logarithmic, and exponential terms on $\mathcal{X}=\mathbb{R}^n_{\ge0}$ and $\supportset=\mathbb{R}^m_{\ge0}$:
    \begin{align}
    g(\vx,\vz)=-\va^\top\vx+\vb^\top\vz+\sin(\vz^\top\vz)+\log(1 + \exp(-\|\vz\|)),   
    \end{align}
    where $\va\in\mathbb{R}^n_{>0}$ and $\vb\in\mathbb{R}^m_{>0}$ are coefficient vectors.
    This function admits the following decomposition:
    \begin{align}
        g(\vx,\vz)=
        \underbrace{-\va^\top\vx+\vb^\top\vz}_{=g_0(\vx,\vz)}
        +\underbrace{\sin(\vz^\top\vz)+\log(1 + \exp(-\|\vz\|))}_{=h(\vx,\vz)}.
    \end{align}
    In this case, $g_0$ satisfies Assumption~\ref{assum:conti-conv} with $(\gamma,\rho)=(1,1)$ and limit function $g_0^*(\vy,\vw)=-\va^\top\vy+\vb^\top\vw$, while $h$ is asymptotically negligible.
    Consequently, $g$ satisfies Assumption~\ref{assum:conti-conv} with $(\gamma,\rho)=(1,1)$ and $g^*=g^*_0$.
    Practical applications of such constraints include structural topology optimization~\citep{chun2016structural} and trajectory planning~\citep{wang2016multi}.
\end{example}

\subsection{Extension to Joint Chance Constraints}
The applicability of our framework extends naturally to joint chance constraints of the form
\begin{align}\label{eqn:JCC}
    \mathbb{P}_{\vxi}(\{\vz:g_i(\vx,\vz)\leq 0,\ \forall i=1,\ldots,K\}) \geq 1-\varepsilon,
\end{align}
where $K\in\mathbb{N}$.
This can be equivalently reformulated as a single chance constraint using the function
\begin{align}
    g(\vx,\vz)\coloneqq\max_{i=1,\ldots,K}g_i(\vx,\vz).
\end{align}
The following lemma establishes that if the individual constraints $g_i$, $i=1,\ldots,K$, satisfy Assumption~\ref{assum:conti-conv}, the constraint $g$ does as well.

\begin{proposition}\label{lem:joint_chance_constraint}
Suppose that each function $g_i$,  $i=1,\ldots,K$, satisfies Assumption~\ref{assum:conti-conv} with the same scaling exponents $(\gamma, \rho)$ and limit function $g_i^*$. 
Then, the function 
\begin{align}
g(\vx,\vz) = \max_{i=1,\ldots,K}g_i(\vx,\vz)   
\end{align}
satisfies Assumption~\ref{assum:conti-conv} with parameters $(\gamma, \rho)$ and the limit function 
\begin{align}
g^*(\vy,\vw) \coloneqq \max_{i=1,\ldots,K}g_i^*(\vy,\vw).    
\end{align}
\end{proposition}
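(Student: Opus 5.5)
Since the sets $\mathcal{X}$ and $\supportset$ and the exponents $(\gamma,\rho)$ are shared by all $g_i$, the asymptotic cone $\mathcal{X}^\infty_\gamma$ is the same for every $i$. I would therefore verify conditions \ref{Eq:Conti-Convergence}--\ref{assum:trivial_safety_at_0} for $g=\max_i g_i$ one at a time, using the corresponding property of each $g_i$ together with elementary properties of the finite maximum. Before starting, I would note that the standing requirements are preserved: $g(\cdot,\vz)$ is closed and convex as a finite maximum of closed convex functions, and $g(\vx,\cdot)$ is continuous and Borel measurable as a maximum of finitely many such functions.

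For \ref{Eq:Conti-Convergence}, I would fix $\vy\in\mathcal{X}^\infty_\gamma$, $\vw\in\supportset$ and sequences $\{\vx_u\}\subset\mathcal{X}$, $\{\vz_u\}\subset\supportset$ satisfying \eqref{eq:relation_x_and_y}. By hypothesis, $g_i(\vx_u,\vz_u)/u^\rho\to g_i^*(\vy,\vw)$ for each $i$. Since $u^\rho>0$, division commutes with the maximum:
\[
\frac{g(\vx_u,\vz_u)}{u^\rho}=\max_{i} \frac{g_i(\vx_u,\vz_u)}{u^\rho}.
\]
The map $(t_1,\ldots,t_K)\mapsto\max_i t_i$ is continuous on $\mathbb{R}^K$, so the right-hand side converges to $\max_i g_i^*(\vy,\vw)=g^*(\vy,\vw)$. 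Continuity of $g^*$ on $\mathcal{X}^\infty_\gamma\times\supportset$ follows because it is a finite maximum of continuous functions. Condition \ref{Assum:NonEmptyHorizon} depends only on $\mathcal{X}$ and $\gamma$, so it is inherited directly from any single $g_i$.

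For \ref{eq:away_from_0}, I would take $\vy\in\mathcal{X}^\infty_\gamma\setminus\{\vzero\}$. Each $g_i^*(\vy,\vzero)<0$, and a maximum of finitely many negative numbers is negative. For \ref{eq:nonempty_set_g^*}, I would use the fact that $g_1$ satisfies it to pick $\vw\in\supportset$ with $g_1^*(\vy,\vw)>0$; then $g^*(\vy,\vw)\ge g_1^*(\vy,\vw)>0$. For \ref{assum:trivial_safety_at_0}, when $\gamma<0$ we have $g_i(\vzero,\vz)\le0$ for all $i$ and all $\vz\in\supportset$, hence $g(\vzero,\vz)\le0$.

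I expect no step to be a real obstacle. The only point needing care is the first one, where the limit has to be interchanged with the maximum. This is justified by continuity of the maximum over a finite index set together with positivity of $u^\rho$. I would also remark that the common $\gamma$ is what guarantees all $g_i$ share the same cone $\mathcal{X}^\infty_\gamma$ and sequence regime, so the limits can be taken along identical sequences. A common $\rho$ is likewise essential, since otherwise the normalized maximum would not have a single nondegenerate limit.
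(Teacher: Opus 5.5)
Your proposal is correct and follows essentially the same route as the paper: conditions (A1)--(A5) are checked one at a time, using the corresponding property of each $g_i$ together with elementary facts about a finite maximum. Concretely, the maximum is continuous, so it commutes with the limit; a maximum of negative numbers is negative; and $g^*\ge g_i^*$ gives the nonemptiness condition. Your explicit remarks that $g^*$ is continuous and that $g(\cdot,\vz)$ stays closed and convex are small but welcome additions, since the paper's proof leaves them implicit.
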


\begin{example}[Linear Joint Chance Constraint]
Consider a joint chance constraint of the form $\mathbb{P}_{\vxi}(\{ \vz : A\vx \ge B\vz \}) \ge 1-\varepsilon$, where $A \in \mathbb{R}^{K \times n}$ and $B \in \mathbb{R}^{K \times m}$ are coefficient matrices. 
Let the feasible region be $\mathcal{X}=\mathbb{R}^n_{\ge0}$ and the support be $\supportset=\mathbb{R}^m_{\ge0}$.
The condition $A\vx \ge B\vz$ requires satisfying $K$ linear inequalities simultaneously:
\begin{align*}
(B\vz - A\vx)_i \le 0, \quad\forall i=1,\dots,K.    
\end{align*}
We can reformulate the problem using a single constraint function $g(\vx, \vz) := \max_{i=1,\dots,K} g_i(\vx, \vz)$, where each component is defined as $g_i(\vx,\vz) = - \va_i^\top \vx+\vb_i^\top \vz $ with $\va_i^\top$ and $\vb_i^\top$ denoting the $i$-th rows of $A$ and $B$, respectively.

Since each $g_i$ satisfies Assumption~\ref{assum:conti-conv} with exponents $(\gamma, \rho) = (1, 1)$ and limit function $g_i^*(\vy, \vw) = - \va_i^\top \vy+\vb_i^\top \vw $, Proposition~\ref{lem:joint_chance_constraint} guarantees that $g$ also satisfies Assumption~\ref{assum:conti-conv} with the same exponents and limit function $g^*(\vy, \vw) = \max_{i=1,\dots,K} ( - \va_i^\top \vy+\vb_i^\top \vw)$.
This formulation represents the canonical form of linear joint chance constraints in \ac{cco}~\citep{shapiro2021lectures}.
\end{example}

\section{Numerical Experiments}\label{Sec:NumExp}
In this section, we numerically evaluate the performance of our proposed decision-scaled \ac{sa} against the classical \ac{sa}. We demonstrate the efficacy of our method on three existing benchmark problems from the literature: portfolio optimization, reliability-based short column design, and norm optimization.

Throughout all experiments, we maintain a confidence level of $\beta = 0.01$ and evaluate the decision-scaling approach across three rare-event risk tolerance levels: $\varepsilon\in\{10^{-3},10^{-4},10^{-5}\}$.
To ensure a fair comparison across the three benchmarks, we select the scaling parameter $s$ such that $s^\alpha\in\{1.1, 1.2\}$, where $\alpha$ is the tail index of the respective problem's underlying distribution.
Since our approach reduces sample complexity from $O(\varepsilon^{-1})$ to $O(\varepsilon^{-1/s^\alpha})$, fixing $s^\alpha$ guarantees an identical polynomial reduction in required sample sizes across all three experiments.
For each configuration, the deterministic sample size requirement is presented via line graphs.
To ensure statistical validity, we perform 100 independent trials per configuration to account for sampling variability. 
The experimental results are visualized using line plots, where markers denote the median value across these trials and error bars represent the interquartile range (the 25th and 75th percentiles).
Out-of-sample validation is conducted via Monte Carlo simulation using ${10^4}/{\varepsilon}$ independent samples to provide sufficient statistical precision for estimating violation probabilities at each risk tolerance level.

The numerical experiments were implemented in Julia~1.11.2 using the JuMP~1.30.0 modeling package.
The portfolio optimization problems were solved using Gurobi~13.0.0, while the short column and the norm optimization problems were solved using Mosek~11.0.27. 
A computational time limit of one~hour was imposed on each run. 
All computations were executed on high-performance computing nodes equipped with 2.90~GHz Intel Xeon Gold 6226R CPU and 32~GB of RAM.
The source code and data used to reproduce the numerical experiments are available at \url{https://github.com/Subramanyam-Lab/Decision-Scaled-Scenario-Approach}.

\subsection{Portfolio Optimization}\label{NumExp:Portfolio}
Our first benchmark is a portfolio optimization problem studied by \citet{xie2020bicriteria, blanchet2024efficient}. The goal is to allocate capital across $n$ different assets to maximize the total return while managing downside risk.

Let $\vx=(x_1,\ldots,x_n)^\top\in\mathbb{R}^n_{\ge 0}$ represent the amount of capital invested in each asset.
For each asset $i=1,\ldots,n$, we associate an expected return per dollar invested, $\mu_i$, and a non-negative random loss per dollar invested $\xi_i$. 
Let $\vmu=(\mu_1,\ldots,\mu_n)^\top$ and $\vxi=(\xi_1,\ldots,\xi_n)^\top$ be the corresponding vectors of expected returns and random losses.
The total expected return of the portfolio is thus $\vmu^\top \vx$, and the total random loss is $\vx^\top\vxi $.

We aim to maximize the portfolio's expected return subject to a Value-at-Risk constraint. This constraint requires that the probability of the total portfolio loss $\vx^\top\vxi$ exceeding a predefined threshold $\eta>0$ must be no greater than a small risk tolerance level $\varepsilon$. This leads to the following \ac{cco} problem:
\begin{equation}
\label{eq:portfolio}
\begin{aligned}
\maximize_{\vx\in \mathbb{R}^{n}_{\ge0}} &\quad \vmu^{\top}\vx\\
\text{subject to} &\quad \mathbb{P}_{\vxi}(\{\vz:\vx^\top\vz \leq \eta\}) \geq 1-\epsilon.
\end{aligned}
\end{equation}

The problem's constraint function is
$g(\vx,\vz)\coloneqq\vx^\top\vz-\eta.$
This function satisfies Assumption~\ref{assum:conti-conv} with scaling exponents $\gamma=-1$ and $\rho=0$, which yields the limit function $g^*(\vy,\vw)=\vy^\top\vw-\eta$.
The corresponding decision-scaled scenario problem \eqref{eq:s-sp} for \eqref{eq:portfolio} with parameter $s\ge1$ is
\begin{equation}
\label{eq:ssp-portfolio}
\begin{aligned}
\maximize_{\vx \in \mathbb{R}^n_{\ge0}} &\quad  \vmu^\top \vx \\
\text{subject to} &\quad   \vx^\top\vz^{(j)} \le \frac{\eta}{s}, \quad \forall j=1, \ldots, N,
\end{aligned}
\end{equation}
where the number of samples is $N=\mathsf{N}\left(\varepsilon^{1/s^{\alpha}},\beta\right)$.

\paragraph{Test setup.}
We construct a test instance with $n=20$ assets as follows. 
\begin{itemize}[label=\textbullet]
    \item The components of the expected return vector, $\mu_i$, are drawn independently from a uniform distribution, $\mu_i\sim\operatorname{Uniform}[1, 3]$.
    
    \item The non-negative random loss $\vxi$ is
    assumed to follow an independent Weibull distribution, aligning with standard practices in financial risk management~\citep{malevergne2006extreme}.
    This choice of distribution satisfies Assumption~\ref{assum:light-tail}.

    \begin{itemize}
        \item The shape parameter is set to $k_i=0.9$ for all $i=1,\ldots,n$, which corresponds to a tail index of $\alpha=0.9$.
        
        \item The scale parameters, $\sigma_i$, are drawn independently from a uniform distribution, $\sigma_i\sim\operatorname{Uniform}[2, 10]$.
    \end{itemize}

    \item The total loss threshold is set to  $\eta=1000$.
\end{itemize}

\paragraph{Numerical results.}
\begin{figure}[htbp]
    \centering
    \begin{subfigure}[t]{0.495\textwidth}
        \centering
        \includegraphics[width=\textwidth]{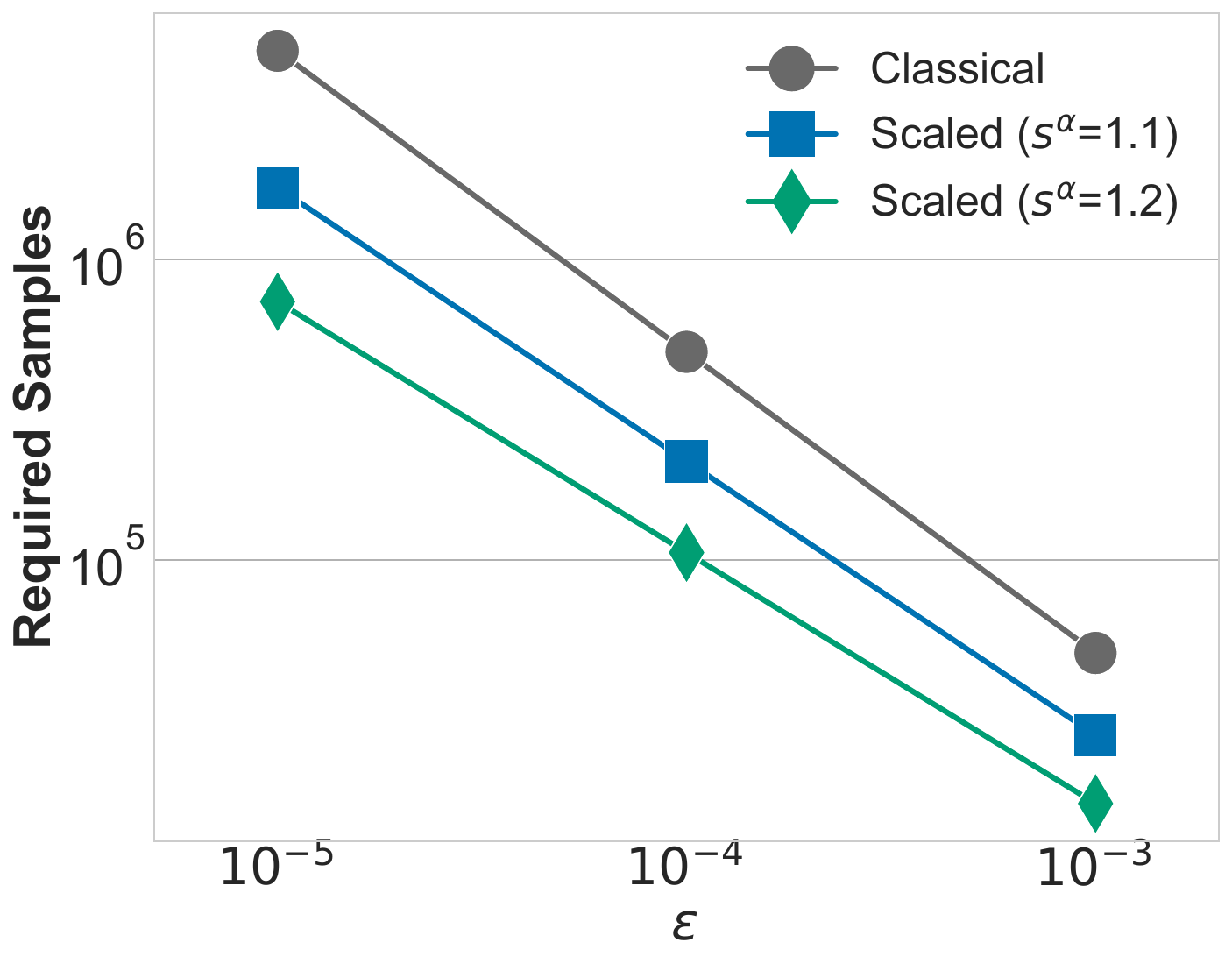}
    \end{subfigure}
    \hfill
    \begin{subfigure}[t]{0.495\textwidth}
        \centering
        \includegraphics[width=\textwidth]{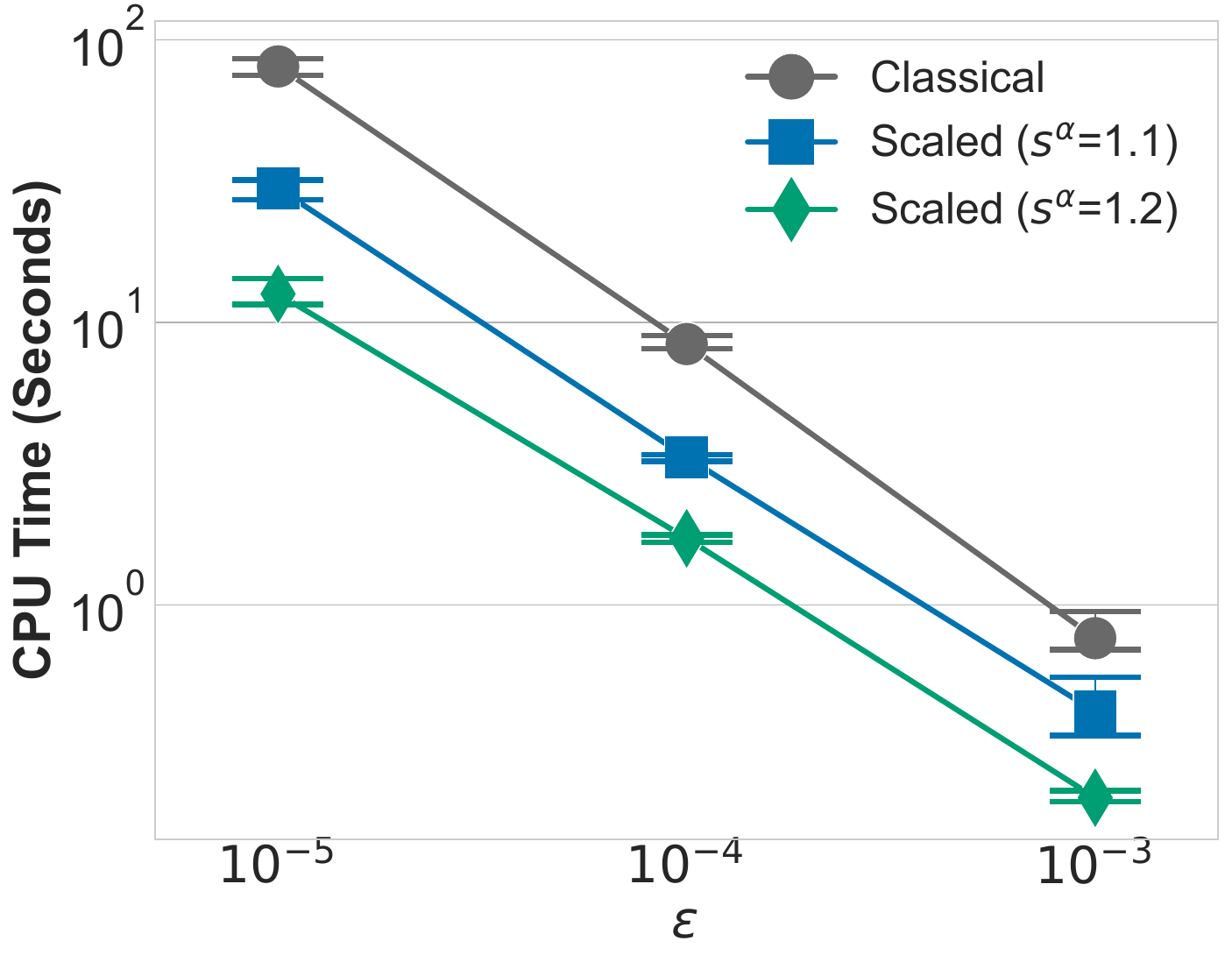}
    \end{subfigure}
    
    \caption{Comparison of computational efficiency for the portfolio optimization problem: (left) Required sample size; (right) CPU time.}
    \label{fig:portfolio_efficiency}
\end{figure}

Figure~\ref{fig:portfolio_efficiency} demonstrates computational efficiency of our proposed decision-scaled \ac{sa}. The left panel illustrates that for any given risk level $\varepsilon$ our approach requires substantially fewer samples than the classical method. This reduction in sample size directly translates to a decrease in computation time, as shown in the right panel, where the efficiency gain increases with the scaling factor $s$.

\begin{figure}[htbp]
    \centering
    \begin{subfigure}[t]{0.495\textwidth}
        \centering
        \includegraphics[width=\textwidth]{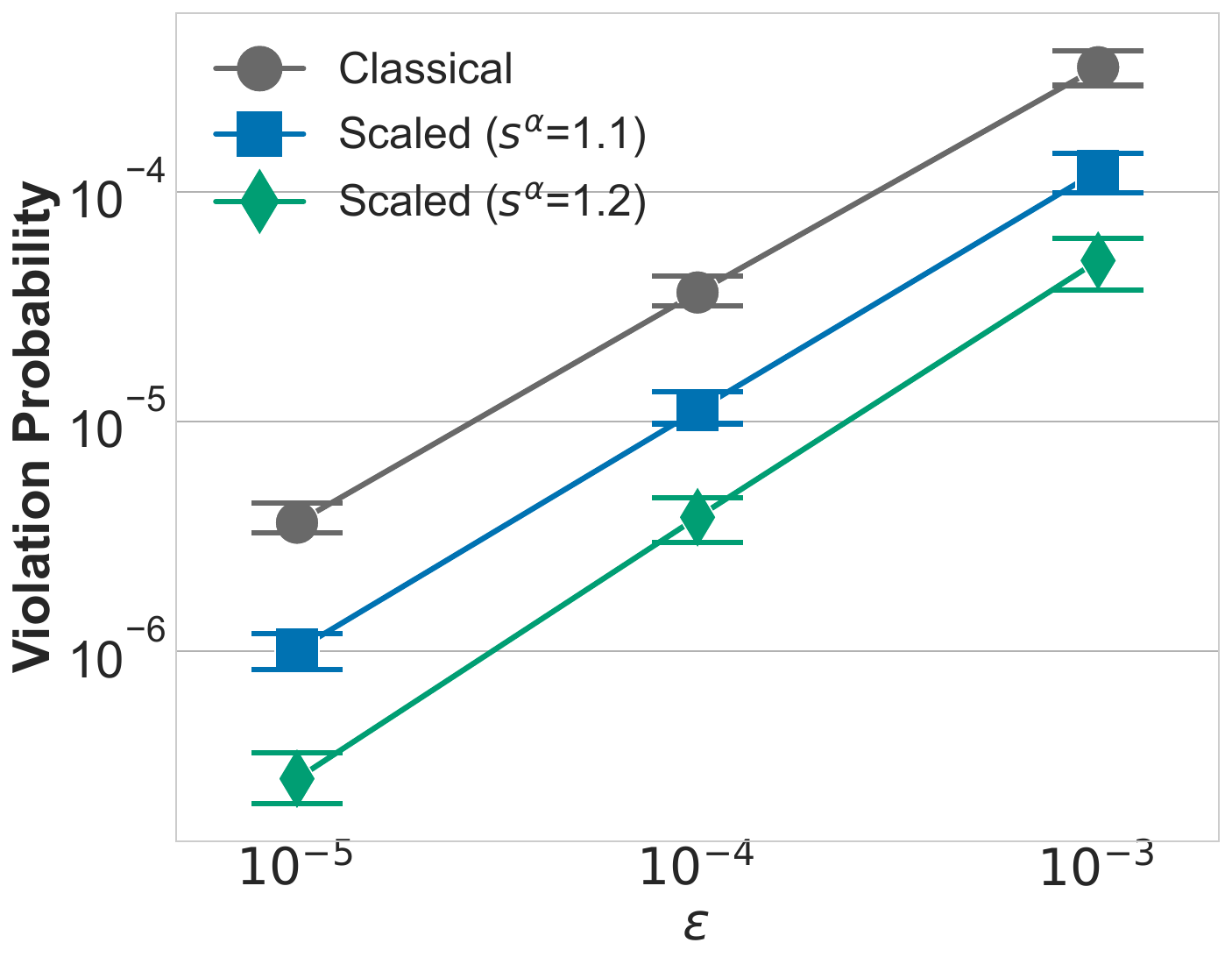}
    \end{subfigure}
    \hfill
    \begin{subfigure}[t]{0.495\textwidth}
        \centering
        \includegraphics[width=\textwidth]{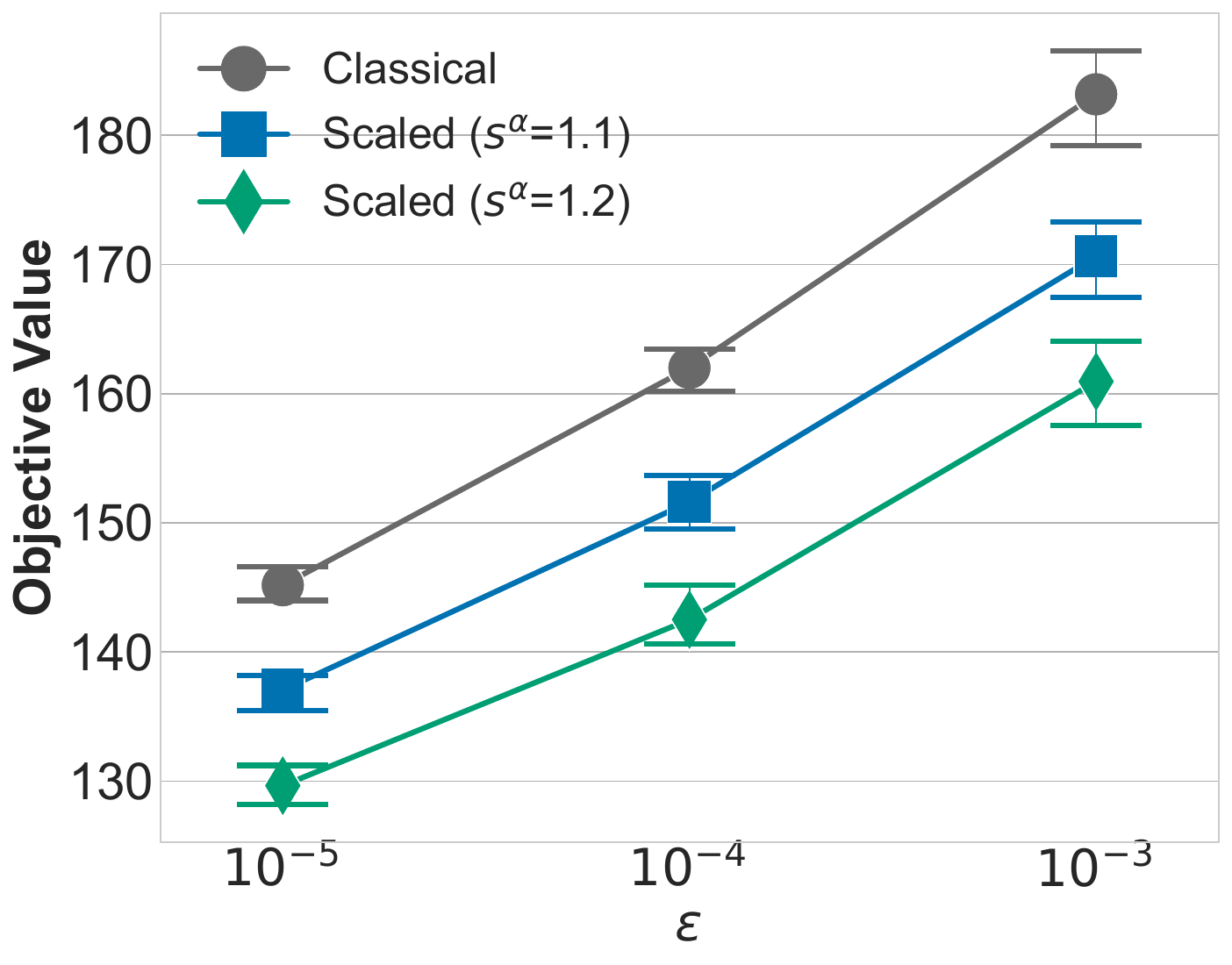}
    \end{subfigure}
    
    \caption{Comparison of solution quality for the portfolio optimization problem: (left) Objective value; (right) Violation probability.}
    \label{fig:portfolio_quality}
\end{figure}
Figure~\ref{fig:portfolio_quality} examines the quality of the obtained solutions. 
The left panel presents the estimated violation probability. All methods reliably produce solutions with estimated violation probabilities below the target level $\varepsilon$.
However, our proposed methodology yields solutions that are slightly more conservative, and this tendency increases with the scaling factor $s$. As shown in the right panel, this increased conservatism results in a marginally lower objective value, representing the computational efficiency premium.

The scaling factor $s$ serves as a tunable hyperparameter. It offers practitioners a clear trade-off, allowing them to strike a balance between computational efficiency and solution conservatism to best suit their specific requirements.

\subsection{Reliability-based short column design}\label{Numexp_shortcolumn}
Next, we consider
the short column design problem, a classic benchmark in reliability-based design~\citep{aoues2010benchmark}, %
following the formulation from~\citet{tong2022optimization}.

The objective is to determine the optimal dimensions of a short column with a rectangular cross-section. The decision vector is $\vx=(x_w,x_h)^\top$, where $x_w$ and $x_h$ are the width and height of the cross-section, respectively. The column is subjected to uncertain loads, represented by the random vector $\vxi=(\xi_M,\xi_F)^\top$, where $\xi_M\ge 0$ is the bending moment and $\xi_F\ge0$ is the axial force. The material's yield stress, $C_Y\ge0$, is a known constant. 

The design goal is to minimize the cross-sectional area $x_wx_h$, while ensuring that the probability of material failure does not exceed a small tolerance $\varepsilon$. To ensure a physically realistic design, the dimensions are constrained to lower bounds $L_w>0$ and $L_h>0$, respectively.
The resulting \ac{cco} problem is formulated as follows.
\begin{equation}
\label{eq:shortcolumn}
\begin{aligned}
\minimize_{\vx} &\quad x_wx_h\\
\text{subject to} &\quad \mathbb{P}_{\vxi}\left(\left\{(z_M,z_F)^\top:\frac{4z_M}{C_Yx_wx_h^2}+\frac{z_F^2}{C_Y^2x_w^2x_h^2} \leq 1\right\}\right) \geq 1-\varepsilon,\\
&\quad x_w \ge L_w, \\
&\quad  x_h \ge L_h.
\end{aligned}
\end{equation}

We apply a change of variables:
$x_w=\exp(\tilde{x}_w), x_h=\exp(\tilde{x}_h), z_M=\exp(\tilde{z}_M)$, and $z_F=\exp(\tilde{z}_F)$.
Applying this transformation to the inequality inside the chance constraint yields:
\begin{align}
    \frac{4\exp{(\tilde{z}_M-\tilde{x}_w-2\tilde{x}_h)}}{C_Y} + \frac{\exp{(2\tilde{z}_F-2\tilde{x}_w-2\tilde{x}_h)}}{C_Y^2}\le 1.
\end{align}
Taking the logarithm on both sides and letting $\tilde{\vx}=(\tilde{x}_w, \tilde{x}_h)^\top$ and $\tilde{\vz}=(\tilde{z}_M, \tilde{z}_F)^\top$, 
the corresponding constraint function can be defined as
\begin{align}
g(\tilde{\vx},\tilde{\vz})\coloneqq\log\left(\frac{4\exp{(\tilde{z}_M-\tilde{x}_w-2\tilde{x}_h)}}{C_Y} + \frac{\exp{(2\tilde{z}_F-2\tilde{x}_w-2\tilde{x}_h)}}{C_Y^2}\right).
\end{align}
Following Example~\ref{ex:lse}, this constraint function $g(\tilde{\vx},\tilde{\vz})$ satisfies Assumption~\ref{assum:conti-conv} with scaling exponents $(\gamma,\rho)=(1,1)$ and the limit function $g^*(\vy,\vw)=\max\{w_M-y_w-2y_h, 2w_F-2y_w-2y_h\}$.

Consequently, applying our decision-scaling method yields the scaled scenario problem with a parameter $s\ge1$.
Introducing auxiliary variables $u_M^{(j)}$ and $u_F^{(j)}$ to ensure compatibility with standard exponential cone solvers, we solve the following optimization problem:
\begin{equation}\label{eq:short_column_cone}
\begin{alignedat}{2}
\minimize_{\tilde{\vx},u_M^{(j)},u_F^{(j)}} &\quad \tilde{x}_w+\tilde{x}_h &&\\
\text{subject to} 
&\quad \exp{\left(\log\frac{4}{C_Y}+\tilde{z}_M^{(j)}-\frac{(\tilde{x}_w+2\tilde{x}_h)}{s}\right)}\le u_M^{(j)},&&\quad\forall j=1,\ldots,N,\\
&\quad \exp{\left(\log\frac{1}{C_Y^2}+2\tilde{z}_F^{(j)}-\frac{2(\tilde{x}_w+\tilde{x}_h)}{s}\right)}\le u_F^{(j)},&&\quad\forall j=1,\ldots,N,\\
&\quad u_M^{(j)} + u_F^{(j)} \le 1,&&\quad\forall j=1,
\ldots,N,\\
&\quad \tilde{x}_w \ge \log L_w,&&\\
&\quad  \tilde{x}_h \ge \log L_h,&&
\end{alignedat}
\end{equation}
where $N=\mathsf{N}\left(\varepsilon^{1/s^{\alpha}},\beta\right)$ as before.

\paragraph{Test setup.} We construct a test instance using the following parameters:
\begin{itemize}[label=\textbullet]
    \item The deterministic parameters are set to $C_Y=5$, $L_w=5$, and $L_h=15$. 
    
    \item The uncertain load vector $\vxi$ is assumed to follow a multivariate lognormal distribution. 
    Specifically, the log-transformed vector $(\log\xi_M, \log\xi_F)^\top$ follows a multivariate normal distribution with a mean vector $\vmu=[6, 7.5]^\top$ and a covariance matrix $\Sigma=\begin{bmatrix} 0.4 & 0.2 \\ 0.2 & 0.4 \end{bmatrix}$.
    
    \item This ensures that the underlying uncertainty in our reformulated problem satisfies Assumption~\ref{assum:light-tail} with a tail index of $\alpha=2$. 

\end{itemize}

\paragraph{Numerical Results.}
\begin{figure}[htbp]
    \centering
    \begin{subfigure}[t]{0.495\textwidth}
        \centering
        \includegraphics[width=\textwidth]{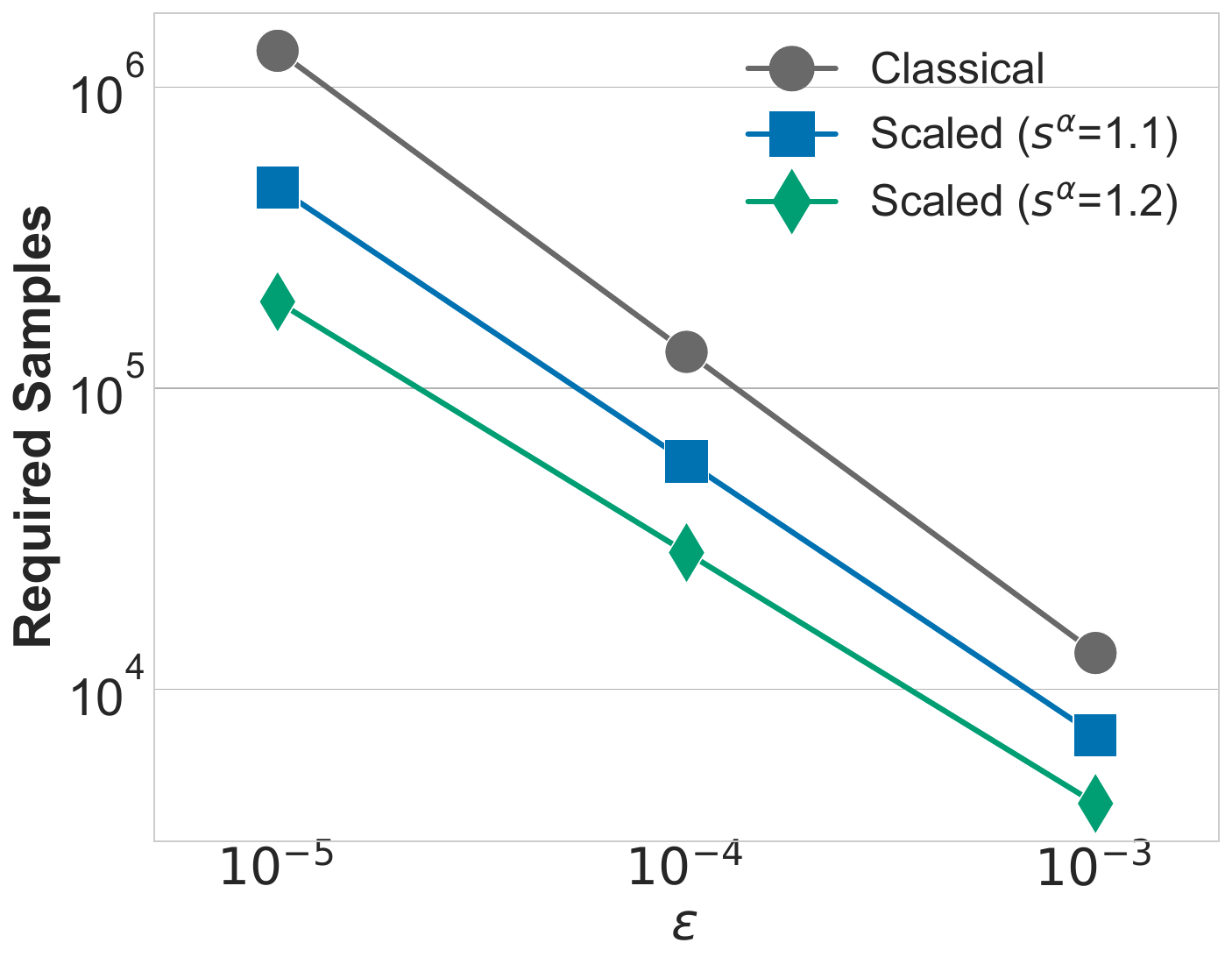}
    \end{subfigure}
    \hfill
    \begin{subfigure}[t]{0.495\textwidth}
        \centering
        \includegraphics[width=\textwidth]{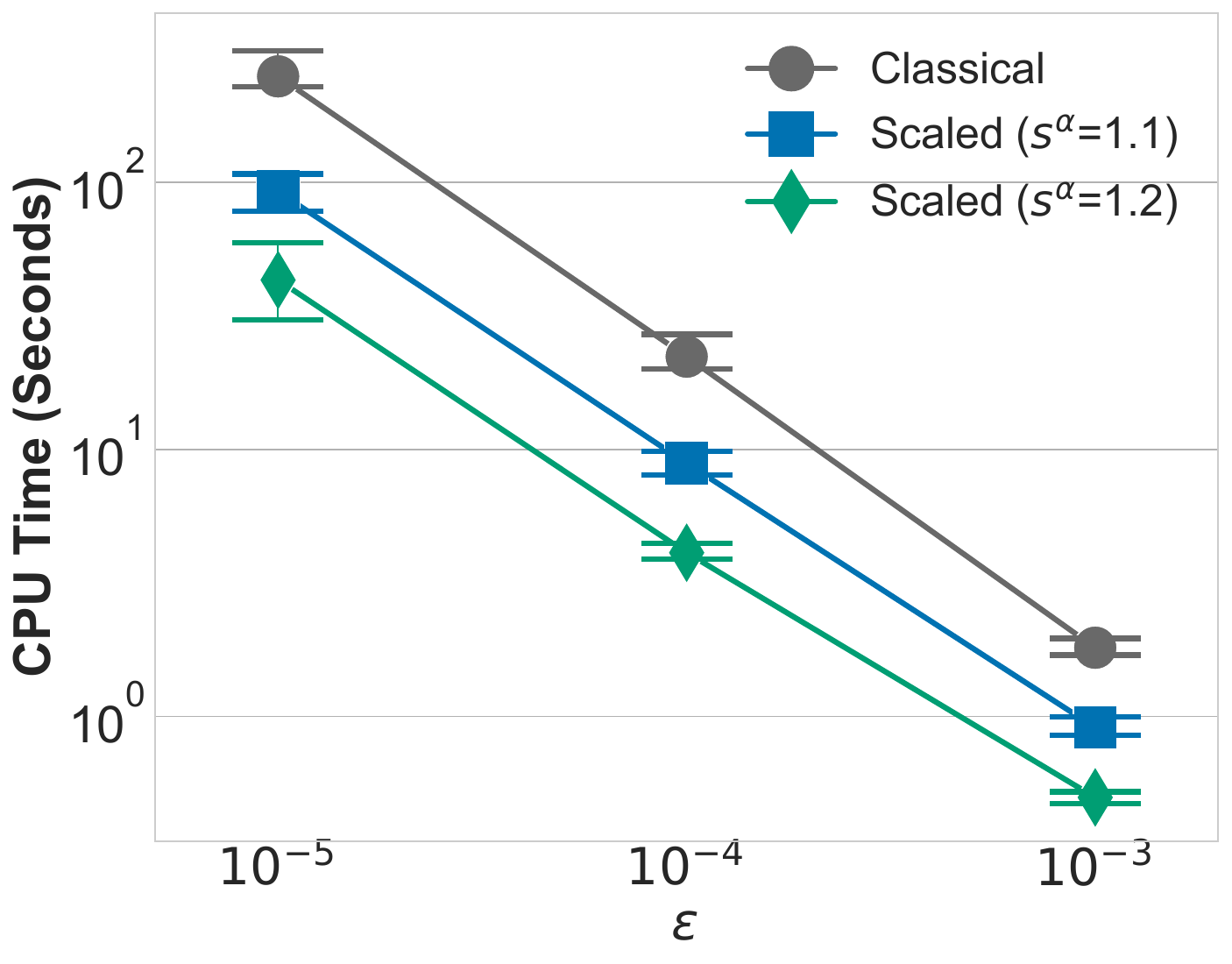}
    \end{subfigure}
    
    \caption{Comparison of computational efficiency for the short column design problem: (left) Required sample size; (right) CPU time.}
    \label{fig:shortcolumn_efficiency}
\end{figure}
Figure~\ref{fig:shortcolumn_efficiency} presents a comparison of the computational efficiency between the classical and decision-scaled \ac{sa}. Consistent with the findings in Section~\ref{NumExp:Portfolio}, the decision-scaled method is more computationally efficient. The left panel shows that for any given risk level $\varepsilon$, the scaled approaches require fewer samples than the classical method, with the required sample size decreasing as the scaling factor $s$ increases. This reduction in sample complexity directly translates to a decrease in computation time, as illustrated in the right panel. 

\begin{figure}[htbp]
    \centering
    \begin{subfigure}[t]{0.495\textwidth}
        \centering
        \includegraphics[width=\textwidth]{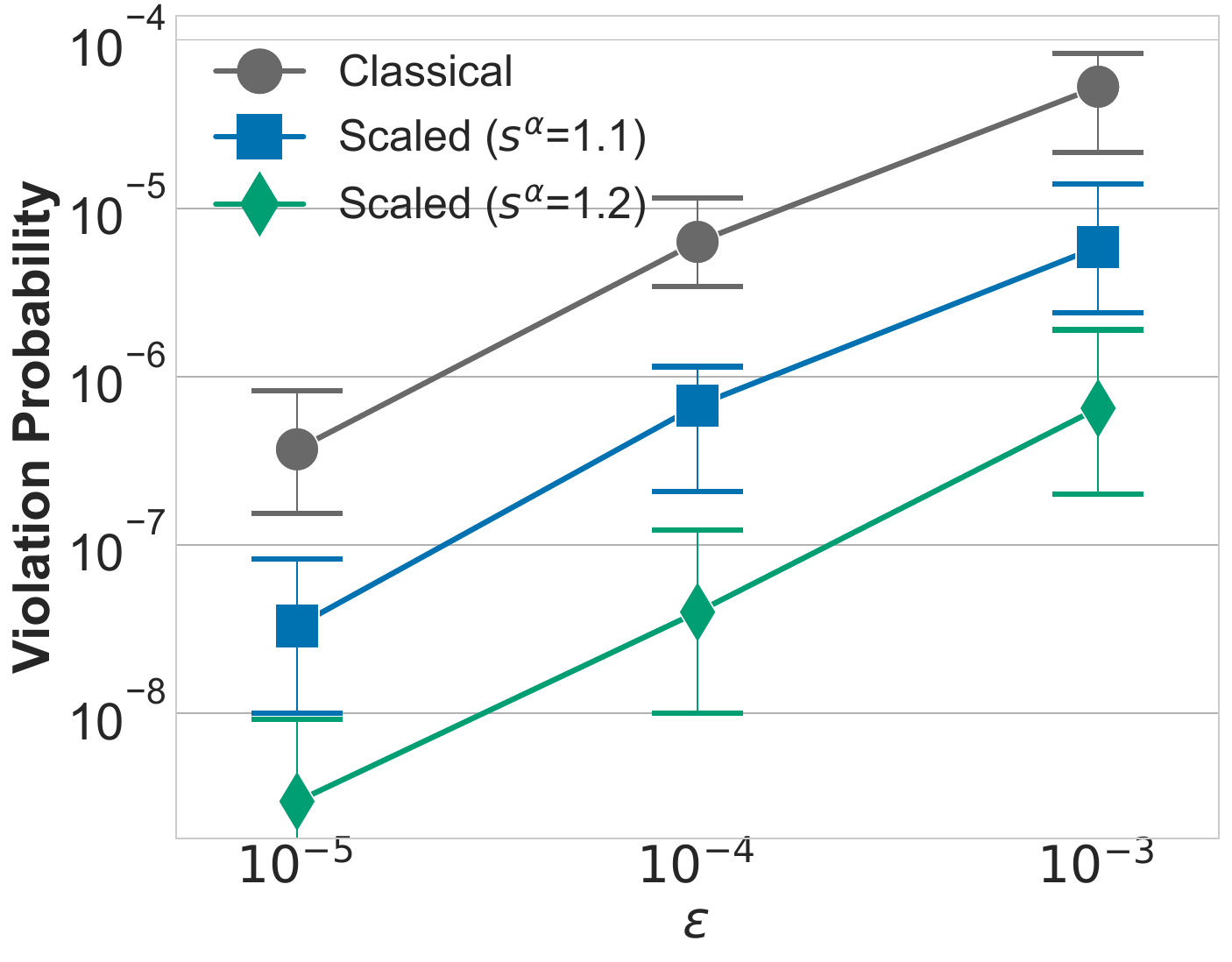}
    \end{subfigure}
    \hfill
    \begin{subfigure}[t]{0.495\textwidth}
        \centering
        \includegraphics[width=\textwidth]{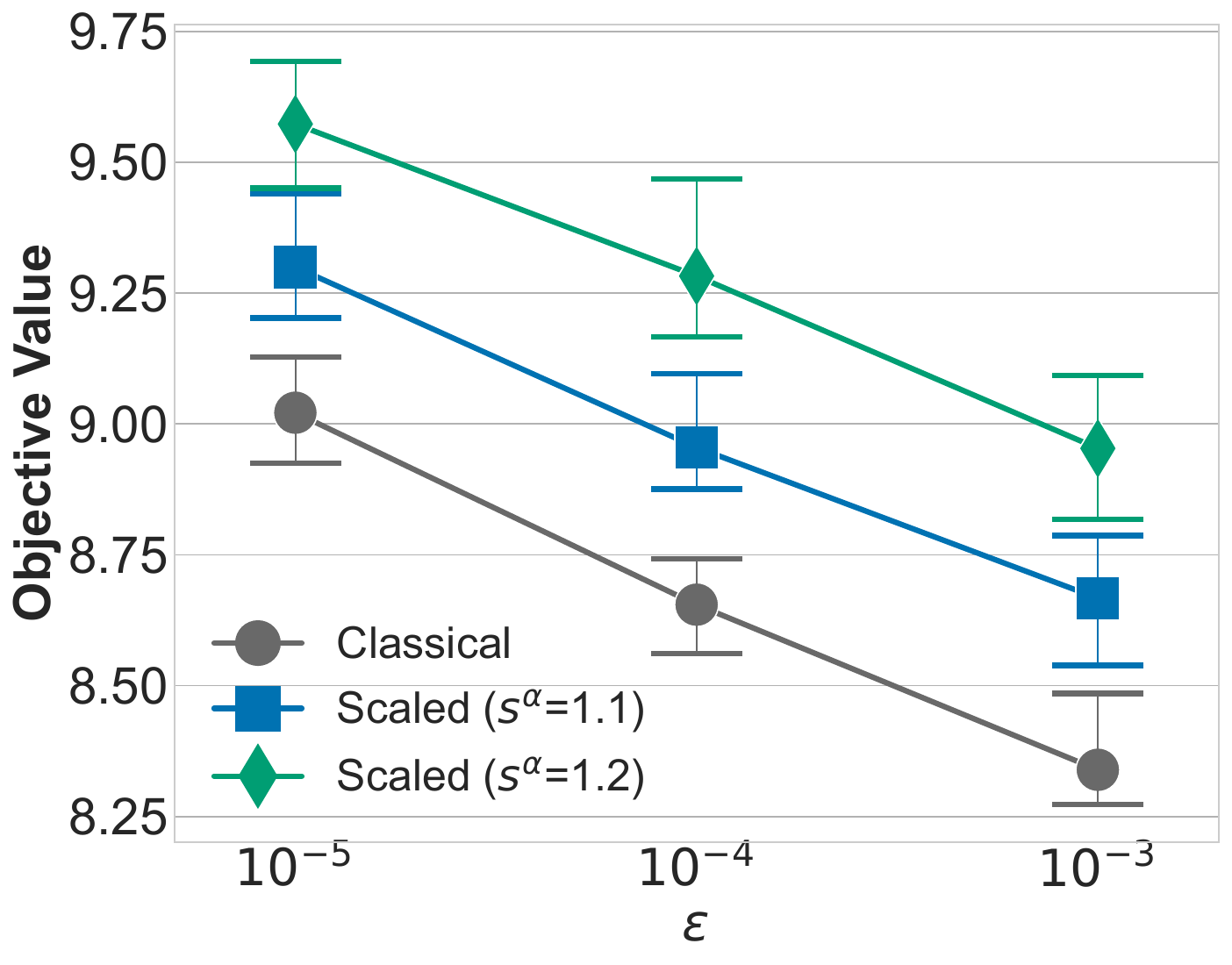}
    \end{subfigure}
    
    \caption{Comparison of solution quality for the short column problem: (left) Objective value; (right) Violation probability.}
    \label{fig:shortcolumn_quality}
\end{figure}
Figure~\ref{fig:shortcolumn_quality} evaluates the quality of the solutions obtained by each method. The left panel shows that all three methods generate conservative solutions, with estimated violation probabilities falling below the prescribed tolerance level $\varepsilon$ across all test cases. 
The decision-scaled method yields solutions that are more conservative than the classical approach, and this conservatism increases with the scaling factor $s$. Consequently, as shown in the right panel, this increased robustness results in a higher objective value for the scaled methods. This marginal cost in the objective function is the trade-off for the reduced computation time.

\subsection{Norm optimization}
Our final numerical experiment examine the norm optimization problem subject to a joint chance constraint, adapted from the formulation presented by~\citet{hong2011sequential}.

Let $\vx\in\mathbb{R}^n_{\ge 0}$ denote a decision vector, and let $\vxi\in\mathbb{R}^{m\times n}$ be a random matrix whose row vectors are defined as
$\vxi_i=(\xi_{i1},\ldots,\xi_{in})$ for $i=1,\ldots,m$.
The objective is to maximize the sum of the decision variables while ensuring that a set of $m$ quadratic inequalities holds simultaneously with a probability of at least $1-\varepsilon$.
The \ac{cco} problem is formulated as follows:
\begin{equation}
\begin{aligned}
\maximize_{\vx\in \mathbb{R}^{n}_{\ge0}} &\quad \vone^{\top}\vx\\
\text{subject to} &\quad \mathbb{P}_{\vxi}\left(\left\{\vz:\sum_{j=1}^n \vz_{ij}^2\vx_{j}^2 \leq 100,\quad\forall i=1,\ldots,m\right\}\right) \geq 1-\epsilon.
\end{aligned}
\end{equation}
This joint chance constraint can be equivalently reformulated using a single maximum function over the individual constraints:
\begin{equation}
\label{eq:norm_opt}
\begin{aligned}
\maximize_{\vx\in \mathbb{R}^{n}_{\ge0}} &\quad \vone^{\top}\vx\\
\text{subject to} &\quad \mathbb{P}_{\vxi}\left(\left\{\vz:\max_{i=1,\ldots,m}\sum_{j=1}^n \vz_{ij}^2\vx_{j}^2-100 \leq 0\right\}\right) \geq 1-\epsilon,
\end{aligned}
\end{equation}

We define the individual constraint functions as $g_i(\vx,\vz)\coloneqq\sum_{j=1}^n z_{ij}^2 x_j^2 - 100$ for $i=1,\ldots,m$. 
Each $g_i$ satisfies Assumption~\ref{assum:conti-conv} with scaling exponents $(\gamma,\rho)=(-1,0)$ and a limit function $g_i^*(\vy,\vw)=\sum_{j=1}^n w_{ij}^2 y_j^2-100$, where $\vy\in\mathbb{R}^n$ and $\vw\in\mathbb{R}^{m\times n}$.
By applying Proposition~\ref{lem:joint_chance_constraint},
the constraint function $g(\vx,\vz)\coloneqq\max_{i=1,\ldots,m}g_i(\vx,\vz)$ satisfies Assumption~\ref{assum:conti-conv} with identical scaling exponents $(\gamma,\rho)=(-1,0)$ and the limit function $g^*(\vy,\vw)=\max_{i=1,\ldots,m}g_i^*(\vw,\vz)$.

Therefore, applying our decision-scaled \ac{sa} with a parameter $s\ge1$ yields the following scenario problem:
\begin{equation}
\label{eq:norm_opt_ssp}
\begin{aligned}
\maximize_{\vx\in \mathbb{R}^{n}_{\ge0}} &\quad \vone^{\top}\vx\\
\text{subject to} &\quad s^2\sum_{j=1}^n {\left(\vz_{ij}^{(k)}\right)}^2\vx_{j}^2 \leq 100 ,\quad\forall i=1,\ldots,m,\quad\forall k=1,\ldots,N,
\end{aligned}
\end{equation}
where $N=\mathsf{N}\left(\varepsilon^{1/s^{\alpha}},\beta\right)$.

\paragraph{Test setup.}
We evaluate the methods on a test instance with $n =5$ variables and $m = 3$ constraints using the following parameters:
\begin{itemize}[label=\textbullet]
    \item Random variables $\xi_{ij}$ for $i=1,\ldots,m$ and $j=1,\ldots,n$ are assumed to follow a normal distribution with mean $j/n$ and variance 1.
    Moreover, $\operatorname{Cov}(\xi_{ij},\xi_{i'j})=0.5$ when $i\neq i'$ and $\operatorname{Cov}(\xi_{ij},\xi_{i'j'})=0$ when $j\neq j'$.

    \item This multivariate normal uncertainty satisfies Assumption~\ref{assum:light-tail} with a tail index of $\alpha=2$.
\end{itemize}

\paragraph{Numerical Results.}
\begin{figure}[htbp]
    \centering
    \begin{subfigure}[t]{0.495\textwidth}
        \centering
        \includegraphics[width=\textwidth]{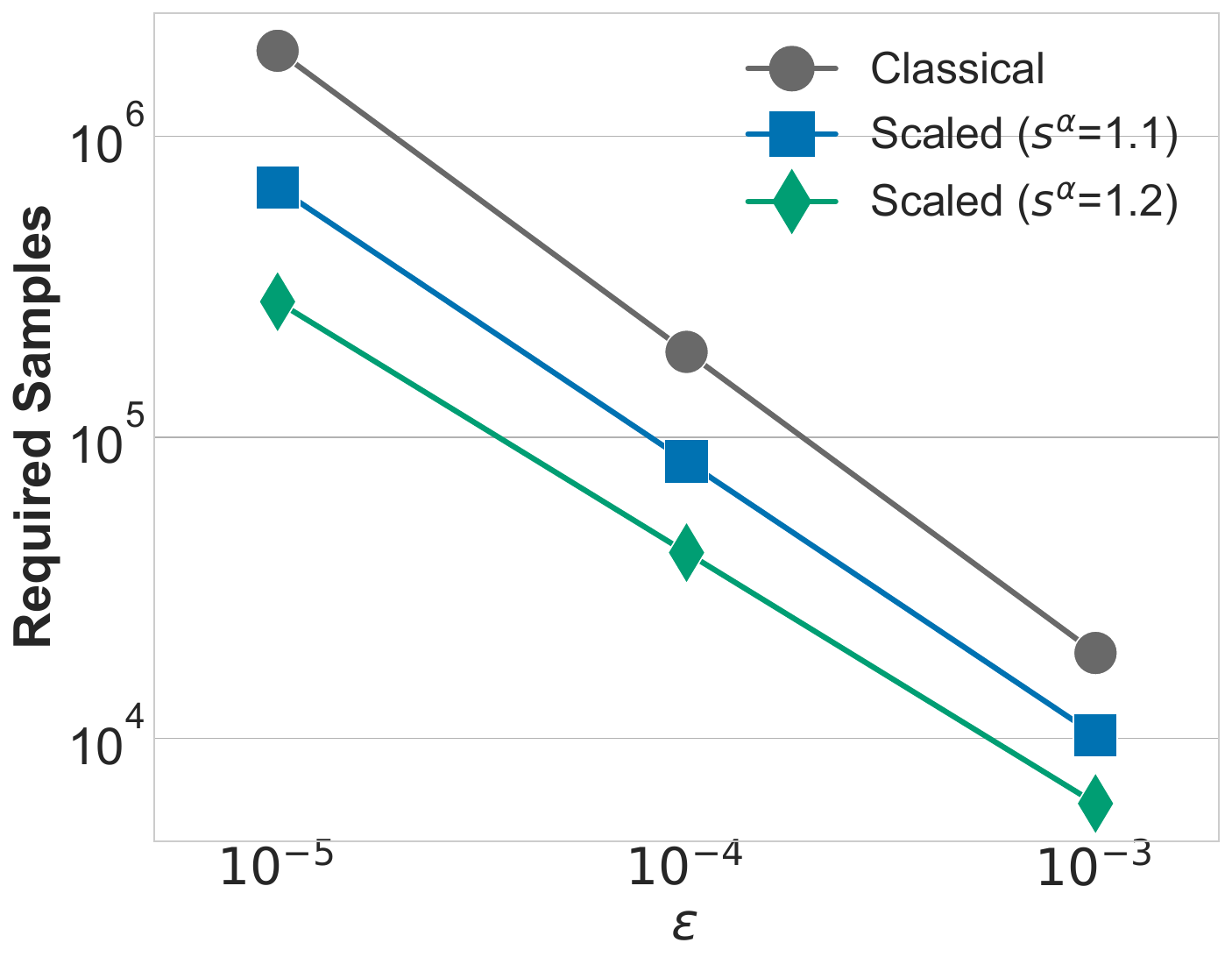}
    \end{subfigure}
    \hfill
    \begin{subfigure}[t]{0.495\textwidth}
        \centering
        \includegraphics[width=\textwidth]{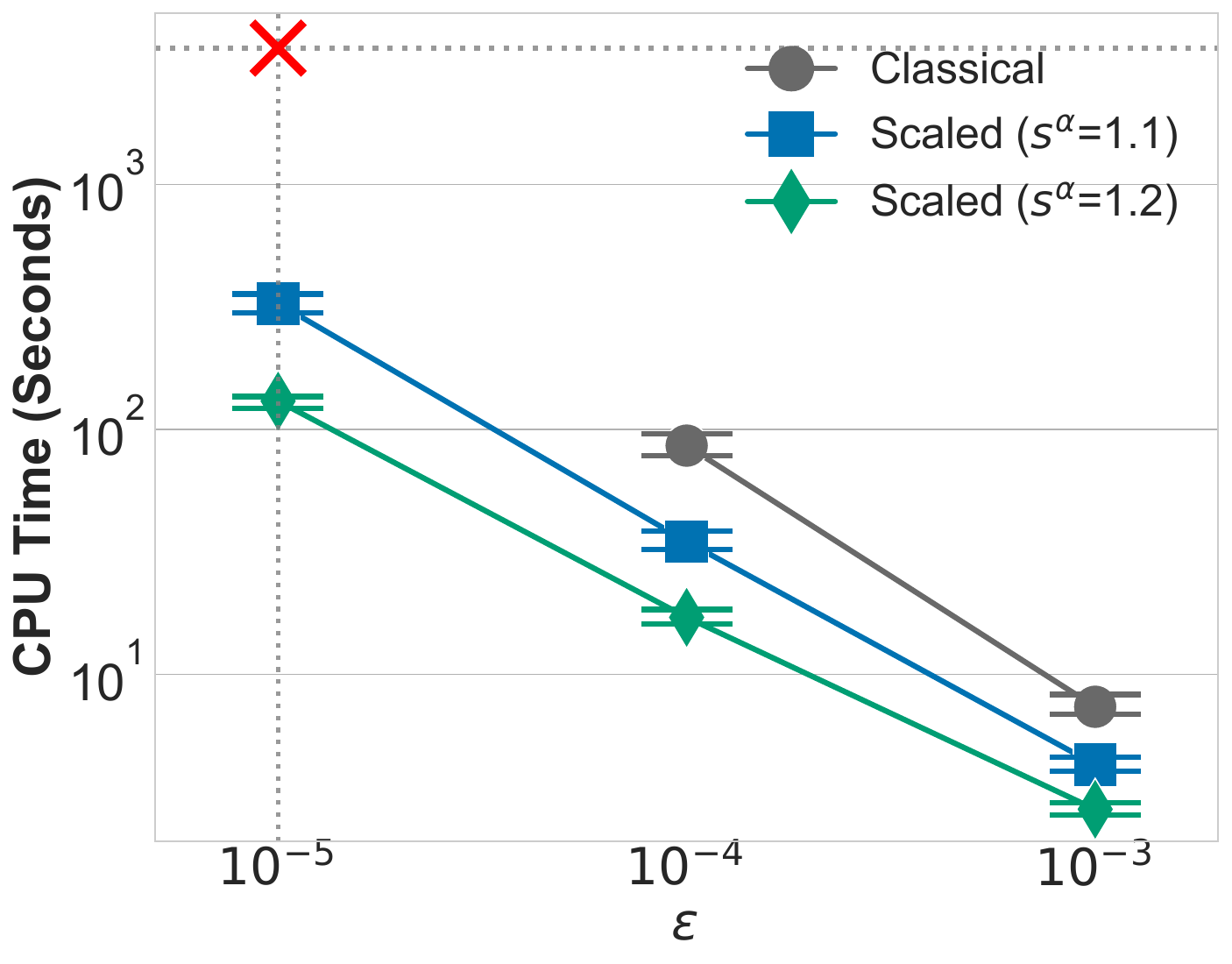}
    \end{subfigure}
    
    \caption{Comparison of computational efficiency for the norm optimization problem: (left) Required sample size; (right) CPU time. At $\varepsilon=10^{-5}$, the classical \ac{sa} failed to find a feasible solution for any of the 100 test instances.}
    \label{fig:norm_efficiency}
\end{figure}
Figure~\ref{fig:norm_efficiency} illustrates the computational performance of
our decision-scaled approach. 
Consistent with prior experiments, the scaled methods achieve substantial reductions in both sample size requirements and CPU time, with these advantages becoming more pronounced at more stringent risk levels.
Notably, because the norm optimization formulation features a joint chance constraint, each generated scenario introduces $m$ distinct constraints.
This results in a total of $m\times N$ constraints in the optimization model, severely compounding the memory burden.
Consequently, at the extreme rare-event tolerance of $\varepsilon=10^{-5}$, the classical \ac{sa} failed to find a feasible solution for any of the 100 test instances.
In contrast, the decision-scaled \ac{sa} successfully solved all instances across every risk level.
This demonstrates that our method, beyond computational efficiency, provides essential robustness for solving problems in extreme rare-event settings where the classical approach becomes intractable.

\begin{figure}[htbp]
    \centering
    \begin{subfigure}[t]{0.495\textwidth}
        \centering
        \includegraphics[width=\textwidth]{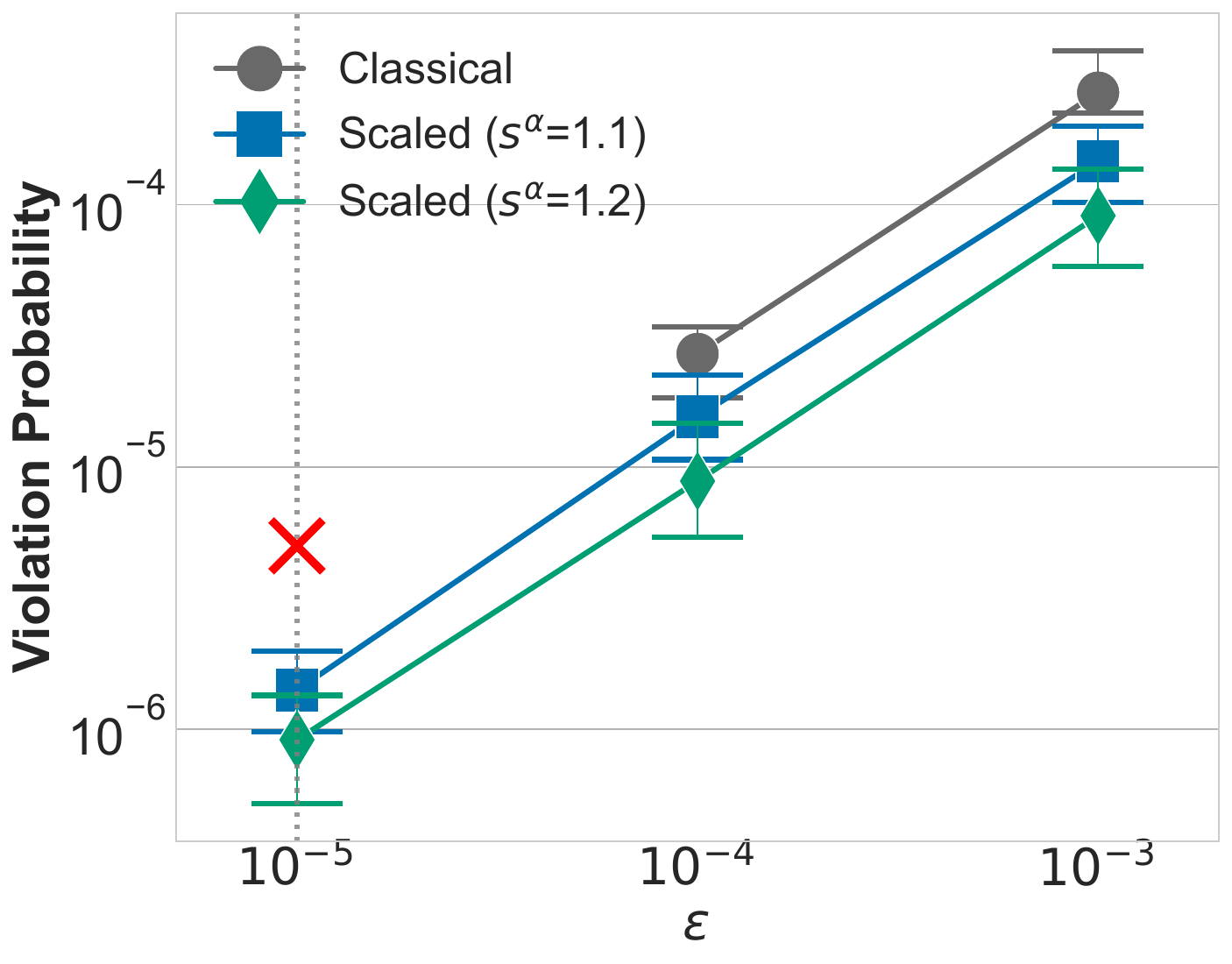}
    \end{subfigure}
    \hfill
    \begin{subfigure}[t]{0.495\textwidth}
        \centering
        \includegraphics[width=\textwidth]{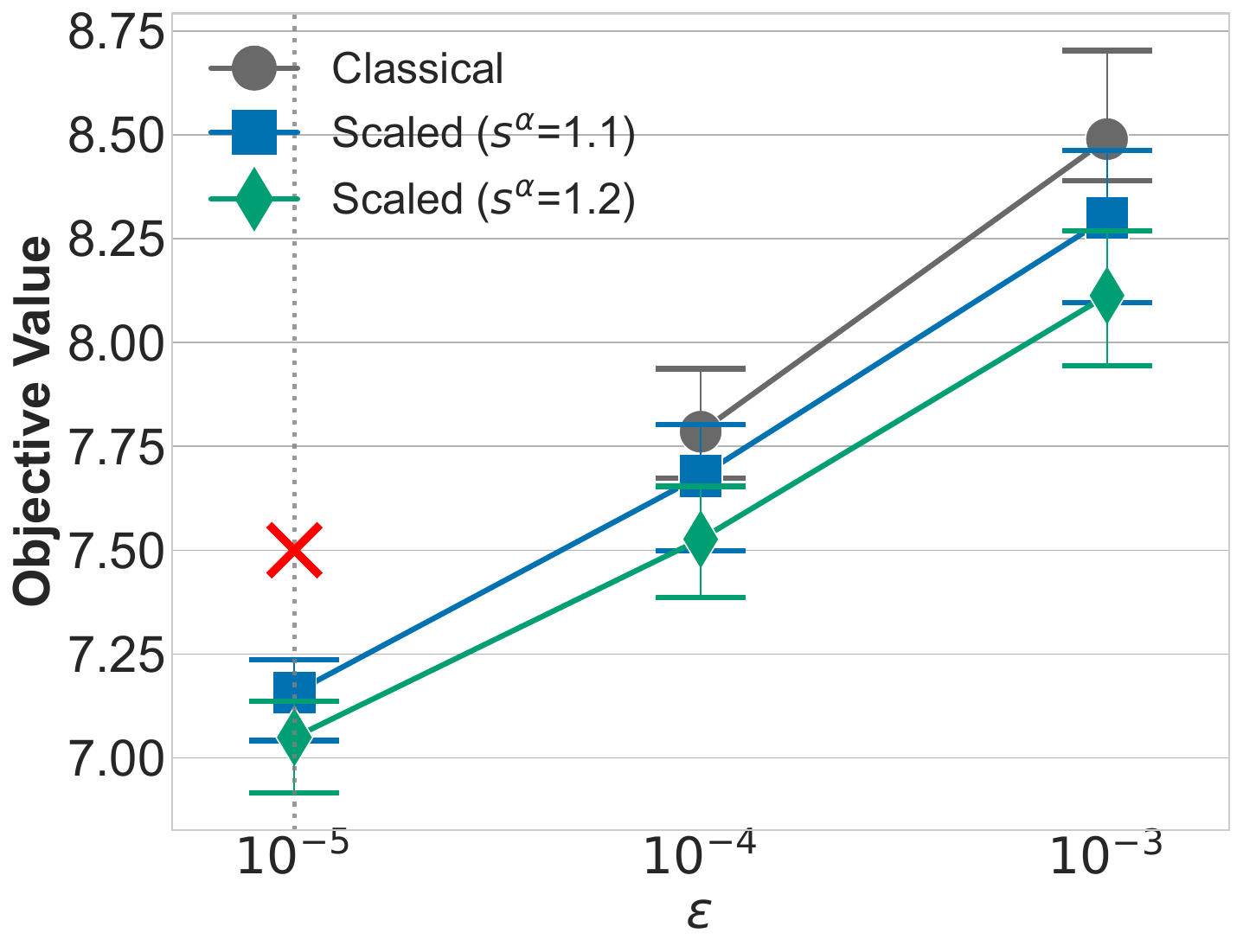}
    \end{subfigure}
    
    \caption{Comparison of solution quality for the norm optimization problem: (left) Objective value; (right) Violation probability. At $\varepsilon=10^{-5}$, the classical \ac{sa} failed to find a feasible solution for any of the 100 test instances.}
    \label{fig:norm_quality}
\end{figure}
Figure~\ref{fig:norm_quality} examines the solution quality trade-offs.
As with the previous
benchmarks, all methods produce solutions with violation probabilities
well below the target $\varepsilon$ (left panel), and the scaled methods yield
slightly lower objective values (right panel).
However, at $\varepsilon=10^{-5}$, this comparison becomes moot since only the scaled methods produce any solutions at all.

\section{Proofs}\label{Sec:Proofs}

\subsection{Supporting Lemmas for the Proof of Theorem~\ref{thm:LT_feasible}}\label{sec:supporting_lemmas}
This section develops the technical machinery needed for the proof of our main result, Theorem~\ref{thm:LT_feasible}.
The development proceeds in three stages.
First, we recall the notion of continuous convergence and establish its key consequences in Lemmas~\ref{Lem:conticonv=localuniform} and~\ref{lem: Levelset Containment}.
Second, we derive structural properties of the limit function $\lambda$ arising from the multivariate regular variation in Assumption~\ref{assum:light-tail} in Lemmas~\ref{lemma:positive_homo} and~\ref{Lemma:lambda_zero}.
Third, we establish a primitive large deviation principle in Lemma~\ref{Lem:primitive_LDP} that underpins the asymptotic analysis of chance constraints; this result forms the foundation on which the proof of Theorem~\ref{thm:LT_feasible} rests.

We begin with continuous convergence, which provides the framework for analyzing the asymptotic behavior of constraint functions. 

\begin{definition}
    A sequence of real-valued functions $\{f_u\}$ is said to converge continuously to a real-valued limit function $f^*$ if, for all convergent sequences $\vx_u\to\vx$,
    \begin{align}
        \lim_{u\to\infty}f_u(\vx_u)=f^*(\vx).
    \end{align}
\end{definition}
Notably, equations~\eqref{eqn:mrv} and~\eqref{eq:direct_conti_conv} are direct applications of this convergence.
Next, we establish useful properties of continuous convergence.

\begin{lemma}[\cite{rockafellar2009variational}, Theorem~7.14]\label{Lem:conticonv=localuniform}\label{Lem:Rockafella_7.14}
A sequence of real-valued functions $\{f_u\}$ converges continuously to $f^*$ if and only if $f^*$ is continuous and $\{f_u\}$ converges uniformly to $f^*$ on all compact sets.
\end{lemma}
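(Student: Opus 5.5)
The plan is to prove the two implications separately. Throughout, $u\to\infty$ is read as in the paper's notation: a statement about $\{f_u\}$ holds iff it holds along every sequence $u_k\to\infty$. This lets me reduce everything to countable sequences and diagonal constructions. The main technical device is the following embedding. Given an increasing sequence $u_1<u_2<\cdots\to\infty$ and points $\vx_k\to\vx$, define the family $\vx_u\coloneqq\vx_k$ for $u\in[u_k,u_{k+1})$ (and $\vx_u\coloneqq\vx_1$ for $u<u_1$). Then $\vx_u\to\vx$ as $u\to\infty$ along every sequence, so continuous convergence applies to $\{\vx_u\}$ and yields $f_{u_k}(\vx_k)=f_{u_k}(\vx_{u_k})\to f^*(\vx)$.

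\emph{($\Leftarrow$)} Assume $f^*$ is continuous and $f_u\to f^*$ uniformly on compact sets. Let $\vx_u\to\vx$ and fix a sequence $u_k\to\infty$. The set $K\coloneqq\{\vx\}\cup\{\vx_{u_k}:k\in\mathbb{N}\}$ is compact, being a convergent sequence together with its limit. Then
\begin{align*}
|f_{u_k}(\vx_{u_k})-f^*(\vx)|\le \sup_{K}|f_{u_k}-f^*|+|f^*(\vx_{u_k})-f^*(\vx)|.
\end{align*}
The first term vanishes by uniform convergence on $K$, and the second by continuity of $f^*$. Since the sequence $u_k$ was arbitrary, this is the required limit.

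\emph{($\Rightarrow$), continuity of $f^*$.} Taking the constant family $\vx_u\equiv\vx$ shows that continuous convergence implies pointwise convergence. Let $\vx_k\to\vx$. By pointwise convergence at each $\vx_k$, I choose $u_k>\max(u_{k-1}+1,k)$ with $|f_{u_k}(\vx_k)-f^*(\vx_k)|<1/k$. The embedding above gives $f_{u_k}(\vx_k)\to f^*(\vx)$, hence $f^*(\vx_k)\to f^*(\vx)$. So $f^*$ is continuous.

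\emph{($\Rightarrow$), uniform convergence on compacts.} Argue by contradiction. Suppose there are a compact $K$, an $\epsilon>0$, a sequence $u_k\to\infty$ and points $\vx_k\in K$ with $|f_{u_k}(\vx_k)-f^*(\vx_k)|\ge\epsilon$. Pass to a subsequence so that the $u_k$ are strictly increasing and, by compactness, $\vx_k\to\vx\in K$. The embedding gives $f_{u_k}(\vx_k)\to f^*(\vx)$. Continuity of $f^*$, just proved, gives $f^*(\vx_k)\to f^*(\vx)$. Together these contradict the $\epsilon$-gap.

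The only delicate point is this embedding: turning a countable diagonal sequence into a family indexed by the continuous parameter $u$ that converges in the paper's sense, which is why the $u_k$ must be taken strictly increasing to infinity. Everything else is routine, and the argument works verbatim on any closed domain.
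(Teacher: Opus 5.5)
The paper gives no proof of this lemma. It simply imports Theorem~7.14 of Rockafellar--Wets, which is stated for sequences $f^\nu$ indexed by $\nu\in\mathbb{N}$.

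Your argument is correct and is the standard one:
\begin{itemize}
\item for the reverse implication, the triangle inequality on the compact set $\{\vx\}\cup\{\vx_{u_k}\}$;
\item for continuity of $f^*$, a diagonal choice of $u_k$;
\item for uniform convergence on compact sets, compactness combined with contradiction.
\end{itemize}

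The ingredient worth highlighting is your piecewise-constant filling-in $\vx_u\coloneqq\vx_k$ on $[u_k,u_{k+1})$. The paper's definition of continuous convergence quantifies over families $\{\vx_u\}_{u>0}$ that converge along \emph{every} sequence $u\to\infty$. A diagonal pair $(u_k,\vx_k)$ is therefore not, by itself, an admissible test object. Your construction is exactly what justifies moving from the $\mathbb{N}$-indexed theorem to the $\mathbb{R}_{>0}$-indexed setting. The paper makes that move without comment, for $Q(u\,\cdot)/q(u)$ in Lemma~\ref{Lem:primitive_LDP} and for $g_u$ in Proposition~\ref{Prop:LDP}.

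Two minor remarks. First, closedness of the domain is not needed: the argument works on any subset of $\mathbb{R}^n$, with compact sets taken inside it. Second, the level-set containment lemma uses uniform convergence in the form ``there exists $u_0$ such that for all $u>u_0$.'' That form is equivalent to your sequential one by the same sequential characterization of limits.
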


\begin{lemma}\label{lem: Levelset Containment}
    If a sequence of functions $\{f_u\}$ converges continuously to a function $f^*$, then for any $M,\delta>0$ and $a\in\mathbb{R}$, there exists $u_0$ such that for all $u>u_0$, we have
    \begin{enumerate}[label=(\roman*), leftmargin=*, ref=\thelemma(\roman*)]
        \item\label{lem: Levelset Containment_part_i} $\mathcal{L}^{M}_{>a}(f_u) \subseteq \mathcal{L}_{\ge a}(f^*)+\mathcal{B}_\delta;$
        \item\label{lem: Levelset Containment_part_ii} $\mathcal{L}^{M}_{>a+\delta}(f^*)\subseteq \mathcal{L}^{M}_{>a}(f_u)$.
    \end{enumerate}
\end{lemma}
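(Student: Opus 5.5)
The plan is to reduce both inclusions to uniform convergence on a compact set. By Lemma~\ref{Lem:conticonv=localuniform}, continuous convergence of $\{f_u\}$ to $f^*$ means two things: $f^*$ is continuous, and $\{f_u\}$ converges uniformly to $f^*$ on the compact ball $\mathcal{B}_M$. So for every $\eta>0$ there is $u_0(\eta)$ such that
\begin{align*}
\sup_{\vx\in\mathcal{B}_M}|f_u(\vx)-f^*(\vx)|<\eta \quad \text{for all } u>u_0(\eta).
\end{align*}
Both parts follow from this with a suitable $\eta$. At the end we take the larger of the two thresholds $u_0$ so that (i) and (ii) hold simultaneously.

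\textbf{Part (ii).} Choose $\eta=\delta$. Let $u>u_0(\delta)$ and $\vx\in\mathcal{L}^M_{>a+\delta}(f^*)$. Then $\vx\in\mathcal{B}_M$ and
\begin{align*}
f_u(\vx)>f^*(\vx)-\delta>a,
\end{align*}
so $\vx\in\mathcal{L}^M_{>a}(f_u)$.

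\textbf{Part (i).} This is the only step needing a genuine argument. The uniform bound only yields $f^*(\vx)>a-\eta$ for $\vx\in\mathcal{L}^M_{>a}(f_u)$, and we must upgrade this to closeness to the set $\mathcal{L}_{\ge a}(f^*)$.

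The first claim is that there exists $\eta>0$ with
\begin{align*}
\{\vx\in\mathcal{B}_M: f^*(\vx)>a-\eta\}\subseteq \mathcal{L}_{\ge a}(f^*)+\mathcal{B}_\delta.
\end{align*}
I would prove this by contradiction using compactness. If it fails, then for each $k$ there is $\vx_k\in\mathcal{B}_M$ with $f^*(\vx_k)>a-1/k$ and $\vx_k\notin\mathcal{L}_{\ge a}(f^*)+\mathcal{B}_\delta$, i.e.\ $\|\vx_k-\vy\|>\delta$ for every $\vy$ with $f^*(\vy)\ge a$. Extract a subsequence converging to some $\bar\vx\in\mathcal{B}_M$. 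Continuity of $f^*$ gives $f^*(\bar\vx)\ge a$, so $\bar\vx\in\mathcal{L}_{\ge a}(f^*)$. But then $\|\vx_k-\bar\vx\|\le\delta$ for large $k$ along the subsequence, a contradiction. If $\mathcal{L}_{\ge a}(f^*)$ is empty, the same argument shows the left-hand set is empty for small $\eta$, consistent with the Minkowski sum being empty.

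With this $\eta$ in hand, let $u>u_0(\eta)$ and $\vx\in\mathcal{L}^M_{>a}(f_u)$. Then $f^*(\vx)>f_u(\vx)-\eta>a-\eta$, and the first claim places $\vx$ in $\mathcal{L}_{\ge a}(f^*)+\mathcal{B}_\delta$.

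The main obstacle is the compactness argument in part (i); part (ii) and the final combination of thresholds are routine.
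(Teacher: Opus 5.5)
Your proof is correct. Part (ii) is identical to the paper's, but part (i) takes a different route.

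The paper argues by contradiction at a single point. It takes a bad point $\vz\in\mathcal{L}^M_{>a}(f_u)\setminus(\mathcal{L}_{\ge a}(f^*)+\mathcal{B}_\delta)$, notes that $f^*(\vz)<a$, and uses local uniform convergence with tolerance $\epsilon/2$, where $\epsilon=a-f^*(\vz)$, to force $f_u(\vz)<a$ for large $u$.

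You instead first prove a uniform margin. By sequential compactness of $\mathcal{B}_M$ and continuity of $f^*$, there is a single $\eta>0$ with $f^*\le a-\eta$ on $\mathcal{B}_M\setminus(\mathcal{L}_{\ge a}(f^*)+\mathcal{B}_\delta)$. One application of uniform convergence on $\mathcal{B}_M$ then finishes the argument.

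This extra step is not cosmetic. In the paper's argument the threshold $u_0$ depends on $\epsilon$, hence on $\vz$, while $\vz$ was chosen for a particular $u$. The contradiction between $f_u(\vz)<a$ and $f_u(\vz)>a$ therefore only goes through if $u$ exceeds a threshold that may itself grow with $u$. Making it rigorous requires exactly the uniformity you establish. Equivalent fixes would be a finite subcover of the compact set of bad points, or a subsequence argument $\vz_{u_k}\to\bar{\vz}$ with $f_{u_k}(\vz_{u_k})\to f^*(\bar{\vz})\ge a$.

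So the paper's version is shorter but leaves the quantifier order unresolved. Yours gives a single $u_0$ valid for all points of $\mathcal{B}_M$, and it handles the case $\mathcal{L}_{\ge a}(f^*)=\emptyset$ without special treatment.
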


\begin{proof}
Let $M,\delta>0$ and $a\in\mathbb{R}$ be arbitrary.

\medskip
\noindent
\textbf{Part (i)}.
Note that $\mathcal{L}_{\ge a}(f^*)+\mathcal{B}_\delta = \{\vy : \|\vy-\vw\|\leq\delta \text{ for some } \vw \text{ with } f^*(\vw)\geq a\}$.
Suppose, for the sake of contradiction, that for every $u$ there exists some $\vz\in \mathcal{L}^{M}_{>a}(f_u)$ with 
$\vz\notin \mathcal{L}_{\ge a}(f^*)+\mathcal{B}_\delta$.
This implies that $\vz\in \mathcal{B}_M$ with $f_u(\vz)>a$, yet $f^*(\vw)<a$ for all $\vw\in \mathcal{B}_{\delta}(\vz)$. In particular, $f^*(\vz)<a$.

Now, set $\epsilon\coloneqq a-f^*(\vz)>0$ and choose $\eta \in (0, \delta)$.
Then, Lemma~\ref{Lem:Rockafella_7.14} implies that there exists $u_0$ such that, for all $u>u_0$, we have
\begin{align*}
    |f_u(\vw)-f^*(\vz)|<\frac{\epsilon}{2}, \quad\forall \vw\in \mathcal{B}_\eta(\vz).
\end{align*}
Consequently, for all $\vw\in \mathcal{B}_\eta(\vz)$ we obtain
\begin{align*}
  f_u(\vw) < f^*(\vz)+\frac{\epsilon}{2} 
= f^*(\vz)+\frac{a-f^*(\vz)}{2}
=\frac{f^*(\vz)+a}{2} < a.  
\end{align*}
Setting $\vw=\vz$ yields $f_u(\vz)<a$, contradicting our assumption that $f_u(\vz)>a$.

\medskip
\noindent
\textbf{Part (ii)}.
From Lemma~\ref{Lem:Rockafella_7.14}, there exists $u_0$ such that, for all $u>u_0$ and all $\vw\in \mathcal{B}_M$,
\begin{align*}
|f_u(\vw)-f^*(\vw)|<\delta.
\end{align*}
Therefore, for any $\vz\in \mathcal{L}^M_{>a+\delta}(f^*)$, i.e., $\vz\in \mathcal{B}_M$ and $f^*(\vz)>a+\delta$, we have
\begin{align*}
f_u(\vz) > f^*(\vz)-\delta> (a+\delta)-\delta = a, 
\end{align*}
which implies $\vz\in \mathcal{L}^M_{>a}(f_u)$.
\end{proof} 

Now, we introduce fundamental properties of $\MRV$ class.
\begin{lemma}\label{lemma:positive_homo}
    If $f\in \MRV(\mathcal{Z},h,f^*)$ with $h\in \RV(\kappa)$, then $f^*$ is positively homogeneous of degree $\kappa$.
\end{lemma}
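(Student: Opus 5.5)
The plan is to reduce the claim to the defining property of regular variation of $h$, by evaluating the limit in~\eqref{eqn:mrv} along two comparable sequences. Fix $\vz\in\mathcal{Z}$ and $t>0$; since $\mathcal{Z}$ is a cone, $t\vz\in\mathcal{Z}$. The goal is to show $f^*(t\vz)=t^\kappa f^*(\vz)$.

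First, apply Definition~\ref{def:mrv} with the constant sequence $\vz_u\equiv t\vz$, which trivially converges to $t\vz$. This gives
\begin{align*}
f^*(t\vz)=\lim_{u\to\infty}\frac{f(u t\vz)}{h(u)}.
\end{align*}
Next, substitute $v=ut$, so that $v\to\infty$ as $u\to\infty$. Write the ratio as
\begin{align*}
\frac{f(ut\vz)}{h(u)}=\frac{f(v\vz)}{h(v)}\cdot\frac{h(ut)}{h(u)}.
\end{align*}
Because the limit in~\eqref{eqn:mrv} is taken along every $u\to\infty$ (in the sense of the paper's notation, along all sequences $u_k\to\infty$), the first factor converges to $f^*(\vz)$ as $v\to\infty$. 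This uses the constant sequence $\vz_v\equiv\vz$. By Definition~\ref{def:RV}, the second factor converges to $t^\kappa$. Multiplying the two limits gives $f^*(t\vz)=t^\kappa f^*(\vz)$.

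Two points need a little care. The first is that $h>0$, so all the ratios are well-defined. The second is the reparametrization from $u$ to $v=tu$: it is legitimate because, for any sequence $u_k\to\infty$, the sequence $v_k=tu_k$ also tends to infinity, and the limit in~\eqref{eqn:mrv} holds along every such sequence. I expect no step to be a genuine obstacle.

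If the paper's convention requires it, the degenerate point $\vz=\vzero$ needs a separate check. Taking $t\ne1$ in the identity gives $f^*(\vzero)=t^\kappa f^*(\vzero)$, and since $\kappa>0$ this forces $f^*(\vzero)=0$. This is consistent with the claim, and the argument above already covers it.
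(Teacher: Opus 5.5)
Your proof is correct and follows the paper's argument: both fix $\vz$ and $t$, substitute $v=ut$, and use $h(ut)/h(u)\to t^\kappa$ to conclude $f^*(t\vz)=t^\kappa f^*(\vz)$. The paper writes the regular-variation factor as $h(\nu)/h(\nu/t)$ instead, but this is the same computation.
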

\begin{proof}
For any $t>0$ and $\vz\in\mathcal{Z}$,
$f^*(t\vz)=\lim_{u\to\infty}{f(ut\vz)}/{h(u)}.$
Substituting $\nu = ut$ yields
\begin{align*}
f^*(t\vz) = \lim_{\nu\to\infty}\frac{f(\nu\vz)}{h({\nu}/{t})}
= t^\kappa\lim_{\nu\to\infty}\frac{f(\nu\vz)}{h(\nu)}
= t^\kappa f^*(\vz).
\end{align*}
\end{proof}

We now examine how Assumption~\ref{assum:light-tail} leverages properties of $\MRV$ classes to characterize the uncertainty distribution.

\begin{lemma}\label{Lemma:lambda_zero}
Under Assumption~\ref{assum:light-tail},
$\lambda(\vz)=0$ if and only if $\vz=\vzero$.
\end{lemma}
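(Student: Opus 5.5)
We must show that $\lambda(\vz)=0$ exactly when $\vz=\vzero$. The natural tool is Lemma~\ref{lemma:positive_homo}: since $Q\in\MRV(\supportset,q,\lambda)$ with $q\in\RV(\alpha)$ and $\alpha>0$, the function $\lambda$ is positively homogeneous of degree $\alpha$, i.e.\ $\lambda(t\vz)=t^\alpha\lambda(\vz)$ for all $t>0$ and $\vz\in\supportset$. Both directions will follow from this identity, combined with continuity of $\lambda$ (from Definition~\ref{def:mrv}) and the positivity condition on the unit sphere in Assumption~\ref{assum:light-tail}.

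\textbf{($\Leftarrow$)} We show $\lambda(\vzero)=0$. Since $\supportset$ is a closed cone (Remark~\ref{Remark:ClosedCone}), $\vzero\in\supportset$. Homogeneity applied at $\vz=\vzero$ gives $\lambda(\vzero)=\lambda(t\vzero)=t^\alpha\lambda(\vzero)$ for every $t>0$. Choosing any $t\neq1$, and using $\alpha>0$, we get $t^\alpha\neq 1$, hence $\lambda(\vzero)=0$. Alternatively, continuity of $\lambda$ gives $\lambda(\vzero)=\lim_{t\downarrow0}t^\alpha\lambda(\vz)=0$ for any fixed $\vz\in\supportset$.

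\textbf{($\Rightarrow$)} Suppose $\vz\in\supportset$ with $\vz\neq\vzero$. The cone property gives $\vz/\|\vz\|\in\supportset\cap\{\vz':\|\vz'\|=1\}$. Homogeneity then yields
\[
\lambda(\vz)=\|\vz\|^\alpha\,\lambda\bigl(\vz/\|\vz\|\bigr).
\]
The first factor is strictly positive, and the second is strictly positive by Assumption~\ref{assum:light-tail}. Therefore $\lambda(\vz)>0$, and in particular $\lambda(\vz)\neq0$.

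There is no real obstacle here. The only points needing care are that homogeneity is applied within the cone $\supportset$, which is justified by Remark~\ref{Remark:ClosedCone}, and that Lemma~\ref{lemma:positive_homo} is valid at $\vz=\vzero$, which holds because the $\MRV$ limit is assumed for every point of the cone, including the origin.
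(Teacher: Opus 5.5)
Your proof is correct and follows the paper's argument for the substantive direction: the cone property places $\vz/\|\vz\|$ on the unit sphere of $\supportset$, and homogeneity combined with positivity there gives $\lambda(\vz)>0$. The one difference is the origin: you derive $\lambda(\vzero)=0$ from $\lambda(\vzero)=t^\alpha\lambda(\vzero)$, while the paper reads it off directly as $\lambda(\vzero)=\lim_{u\to\infty}Q(\vzero)/q(u)=0$, which uses that $q\in\RV(\alpha)$ with $\alpha>0$ diverges (a fact the paper does not prove); both are valid, and yours avoids that fact.
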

\begin{proof}
First, $\lambda(\vzero)=\vzero$ follows directly since
$\lambda(\vzero)=\lim_{u\to\infty}{Q(\vzero)}/{q(u)}=0$
as $Q(\vzero)$ is finite while $q(u)\to\infty$ as $u\to\infty$.

For the converse, let $\mathcal{K}\coloneqq\supportset\cap\{\vtheta\in\mathbb{R}^m:\|\vtheta\|=1\}$ denote the intersection of $\supportset$ with the unit sphere. 
For any nonzero $\vz\in\supportset$,
setting $\vtheta={\vz}/{\|\vz\|}$ gives
$\vtheta\in \mathcal{K}$ because $\supportset$ is a cone.
By Lemma~\ref{lemma:positive_homo}, we have $\lambda(\vz)=\lambda(\|\vz\|\vtheta)=\|\vz\|^\alpha\lambda(\vtheta)$. 
Since $\lambda(\vtheta)>0$ for all $\vtheta\in \mathcal{K}$ by Assumption~\ref{assum:light-tail}, we have $\lambda(\vz)>0$.
\end{proof}

The following lemma establishes that for the primitive scaling $\vz\mapsto \vz/u$, the distribution $\mathbb{P}_{\vxi}$ satisfies a large deviation principle with rate function $\lambda$ and speed $q(u)$.
This result provides the main tool for the asymptotic analysis in Section~\ref{secA1} and ultimately the proof of Theorem~\ref{thm:LT_feasible}.
\begin{lemma}\label{Lem:primitive_LDP}
    Under Assumption~\ref{assum:light-tail},
    for every closed set $\mathcal{C}\subseteq\mathbb{R}^m$,
    \begin{align}
    \limsup_{u\to\infty}\frac{\log\mathbb{P}_{\vxi}\left(\left\{\vz:{\vz}/{u}\in\mathcal{C}\right\}\right)}{q(u)}\le-\inf_{\vz\in\mathcal{C}}\lambda(\vz),
    \end{align}
    and for every open set $\mathcal{O}\subseteq\mathbb{R}^m$,
    \begin{align} \liminf_{u\to\infty}\frac{\log\mathbb{P}_{\vxi}\left(\left\{\vz:{\vz}/{u}\in\mathcal{O}\right\}\right)}{q(u)}\ge-\inf_{\vz\in\mathcal{O}}\lambda(\vz).
    \end{align}
\end{lemma}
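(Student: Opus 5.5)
The plan is to write the probability as an integral of the density after the change of variables $\vz = u\vw$. This gives
\begin{align*}
\mathbb{P}_{\vxi}\left(\left\{\vz:\vz/u\in\mathcal{A}\right\}\right)=u^m\int_{\mathcal{A}\cap\supportset}\exp\left(-q(u)\,\frac{Q(u\vw)}{q(u)}\right)d\vw ,
\end{align*}
and the functions $f_u(\vw)\coloneqq Q(u\vw)/q(u)$ converge continuously to $\lambda$ on the closed cone $\supportset$ by Assumption~\ref{assum:light-tail}. By Lemma~\ref{Lem:Rockafella_7.14}, this convergence is uniform on compact sets. Since $q\in\RV(\alpha)$ with $\alpha>0$, we have $\log u^m = o(q(u))$, so the polynomial prefactor is negligible at speed $q(u)$. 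Both bounds then reduce to a Laplace-type argument.

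\textbf{Lower bound.} Take an open set $\mathcal{O}$ and any $\vw_0\in\mathcal{O}\cap\supportset$; if there is none, the right-hand side is $-\infty$ and nothing needs proving. One subtlety is that $\supportset$ may have empty interior in some directions. I will assume the density is with respect to Lebesgue measure on $\mathbb{R}^m$, so $\supportset$ has nonempty interior. Being a closed convex-type cone and the support of an absolutely continuous measure, points of $\supportset$ can then be approximated by points of its interior. Using continuity of $\lambda$, I pick $\vw_1$ interior to $\supportset$, inside $\mathcal{O}$, with $\lambda(\vw_1)\le\lambda(\vw_0)+\eta$, and a small ball $\mathcal{B}_r(\vw_1)\subset\mathcal{O}\cap\supportset$. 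Uniform convergence on this ball gives $f_u\le\lambda(\vw_1)+2\eta$ there for large $u$. The probability is therefore at least $u^m\,\mathrm{vol}(\mathcal{B}_r)\exp(-q(u)(\lambda(\vw_0)+3\eta))$. Taking logs, dividing by $q(u)$, and letting $\eta\to0$ and then infimizing over $\vw_0$ gives the bound.

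\textbf{Upper bound.} This is the main obstacle, because $\mathcal{C}$ may be unbounded and uniform convergence holds only on compacts. I split $\mathcal{C}$ into $\mathcal{C}\cap\mathcal{B}_M$ and the tail $\{\|\vw\|>M\}$.

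For the tail I need exponential tightness: $\limsup q(u)^{-1}\log\mathbb{P}(\|\vxi\|>uM)\le -cM^\alpha$ for some $c>0$. To get it, note that $\lambda$ is continuous and positive on the compact set $\mathcal{K}$ (Lemma~\ref{Lemma:lambda_zero}), so $\lambda\ge c_0>0$ there. Lemma~\ref{lemma:positive_homo} then gives $\lambda(\vw)\ge c_0\|\vw\|^\alpha$. For the tail integral $u^m\int_{\|\vw\|>M}e^{-Q(u\vw)}d\vw$, I write $\vw$ in polar form $t\vtheta$ with $\vtheta\in\mathcal{K}$, where uniform convergence applies. For large radii I use the uniform convergence theorem for regular variation, $Q(ut\vtheta)/q(u)\approx t^\alpha\lambda(\vtheta)$ for $t$ in compact sets. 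Extending this to $t\to\infty$ needs a Potter-type bound $q(ut)/q(u)\ge \tfrac12 t^{\alpha/2}$ for $t\ge1$ and $u$ large. Combining that with continuous convergence of $Q(v\vtheta)/q(v)\to\lambda(\vtheta)$ as $v=ut\to\infty$ uniformly in $\vtheta$ (continuous convergence on the compact $\mathcal{K}$, applied along $v$) gives $Q(ut\vtheta)\ge \tfrac{c_0}{2}\,q(ut)\ge \tfrac{c_0}{4}\,t^{\alpha/2}q(u)$. The resulting integral over $t>M$ is then bounded by $\exp(-c' M^{\alpha/2}q(u))$ times polynomial factors.

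On the bounded part $\mathcal{C}\cap\mathcal{B}_M\cap\supportset$, which is compact, uniform convergence gives $f_u\ge\inf_{\mathcal{C}}\lambda-\eta$ for large $u$. Its probability is therefore at most $u^m\mathrm{vol}(\mathcal{B}_M)\exp(-q(u)(\inf_{\mathcal{C}}\lambda-\eta))$.

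Finally, the principle of the largest term, $\limsup q^{-1}\log(a_u+b_u)=\max$ of the two limsups, gives the bound $-\min\{\inf_{\mathcal{C}}\lambda-\eta,\;c'M^{\alpha/2}\}$. Sending $M\to\infty$ and then $\eta\to0$ yields the upper bound, including the case $\inf_{\mathcal{C}}\lambda=\infty$ when $\mathcal{C}\cap\supportset=\emptyset$.
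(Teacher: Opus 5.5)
Your argument is correct, and it takes a genuinely different route from the paper.

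\textbf{The paper's route.} It proceeds in four steps:
\begin{enumerate}
\item It proves exact Laplace asymptotics on small balls, $q(u)^{-1}\log\mathbb{P}_{\vxi}(\{\vz:\vz/u\in\mathcal{B}_\delta(\vc)\})\to-\inf_{\mathcal{B}_\delta(\vc)}\lambda$, using uniform convergence plus Varadhan's lemma.
\item It obtains the compact-set upper bound by a finite cover and $\delta\to0$.
\item It obtains the open-set lower bound from a ball around a near-minimizer.
\item It passes from compact to closed sets by citing \citet[Lemma~1.2.18]{dembo2009large}, justified by compactness of the level sets of $\lambda$.
\end{enumerate}

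\textbf{Your route.} You handle $\mathcal{C}\cap\mathcal{B}_M$ with a single uniform estimate on the compact set $\mathcal{B}_M\cap\supportset$. This needs no covering and no Varadhan, and this part does not even use closedness of $\mathcal{C}$. You then control the complement of $\mathcal{B}_M$ by an explicit exponential-tightness bound. It combines uniform convergence on the unit sphere of $\supportset$ with the Potter bound $q(ut)/q(u)\ge\tfrac12 t^{\alpha/2}$.

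\textbf{Why the tail estimate matters.} Part (a) of that lemma upgrades compact-set upper bounds to closed sets only under exponential tightness of the family. Goodness of $\lambda$ does not by itself provide exponential tightness. Moreover, the continuous convergence in the MRV assumption controls $Q(u\vw)/q(u)$ only locally uniformly. So some argument like your Potter step is genuinely needed to handle $\|\vw\|\to\infty$. The paper's route gains brevity and two-sided limits on balls by leaning on standard LDP machinery. Yours is self-contained and makes the closed-set extension explicit.

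\textbf{A small repair in your lower bound.} Assumption~\ref{assum:light-tail} does not make $\supportset$ convex. A closed cone carrying an absolutely continuous law can even have empty interior: in $\mathbb{R}^2$, take the cone over a fat Cantor set of directions, with density proportional to $e^{-\|\vz\|^2}$ on it. So the detour through interior points $\vw_1$, with a ball $\mathcal{B}_r(\vw_1)\subset\mathcal{O}\cap\supportset$, is not justified in general.

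The detour is also unnecessary. Because $\vw_0$ lies in the support of an absolutely continuous law, $\mathcal{B}_r(\vw_0)\cap\supportset$ has positive Lebesgue measure for every $r>0$. Uniform convergence on that compact set then gives the same bound, with $\mathrm{vol}(\mathcal{B}_r)$ replaced by this measure.
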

\begin{proof}
Consider an arbitrary compact ball $\mathcal{B}_{\delta}(\vc)\subset\mathbb{R}^m$.
Under the change of variables $\vz=u\vnu$ (so that $d\vz = u^m\, d\vnu$ and $\vz/u\in\mathcal{B}_\delta(\vc)$ becomes $\vnu\in\mathcal{B}_\delta(\vc)$), and using Assumption~\ref{assum:light-tail} to write the density as $\exp(-Q(\vz))$,
\begin{align*}
\lim_{u\to\infty}\frac{\log\mathbb{P}_{\vxi}(\{\vz:\vz/u\in\mathcal{B}_{\delta}(\vc)\})}{q(u)}&=\lim_{u\to\infty}\frac{\log\int_{\mathcal{B}_{\delta}(\vc)}\exp(-Q(u\vnu))u^m d\vnu}{q(u)}\\
&=\lim_{u\to\infty}\frac{m\log u+\log\int_{\mathcal{B}_{\delta}(\vc)}\exp(-Q(u\vnu))d\vnu}{q(u)}\\
&=\lim_{u\to\infty}\frac{\log\int_{\mathcal{B}_{\delta}(\vc)}\exp(-Q(u\vnu))d\vnu}{q(u)}.
\end{align*}
The third equality holds because Assumption~\ref{assum:light-tail} implies $q\in\RV(\alpha)$ with $\alpha>0$, which in turn implies $m\log u/q(u)\to 0$ as $u\to\infty$.

Since $Q\in\MRV$,
the continuous convergence in~\eqref{eqn:mrv} implies uniform convergence of $Q(u\vnu)/q(u)\to\lambda(\vnu)$ on the compact set $\mathcal{B}_\delta(\vc)$ by Lemma~\ref{Lem:conticonv=localuniform}.
In particular, for any $\epsilon>0$ and all $u$ sufficiently large, $|Q(u\vnu)/q(u)-\lambda(\vnu)|\le\epsilon$ for all $\vnu\in\mathcal{B}_\delta(\vc)$, yielding uniform bounds on the integrand: $\exp(-q(u)(\lambda(\vnu)+\epsilon))\le\exp(-Q(u\vnu))\le\exp(-q(u)(\lambda(\vnu)-\epsilon))$.
Integrating, taking logarithms, dividing by $q(u)$, and noting that $\epsilon > 0$ is arbitrary gives:
\begin{align*}
\lim_{u\to\infty}\frac{\log\int_{\mathcal{B}_{\delta}(\vc)}\exp(-Q(u\vnu))d\vnu}{q(u)}
&=\lim_{u\to\infty}\frac{\log\int_{\mathcal{B}_{\delta}(\vc)}\exp(-q(u)\lambda(\vnu))d\vnu}{q(u)}. 
\end{align*}
Since $\lambda(\cdot)$ is continuous and the integration on the right-hand side is over a compact set, an application of Varadhan's Integral Lemma~\citep[Theorem~4.3.1]{dembo2009large} implies that it must be equal to:
\begin{align}\label{eq:varadhan}
\lim_{u\to\infty}\frac{\log\int_{\mathcal{B}_{\delta}(\vc)}\exp(-Q(u\vnu))d\vnu}{q(u)}
&=-\inf_{\vnu\in\mathcal{B}_{\delta}(\vc)}\lambda(\vnu).
\end{align}

For any compact $\mathcal{K}\subseteq\mathbb{R}^m$ and $\delta>0$, compactness ensures that $\mathcal{K}$ admits a finite cover; i.e., $\mathcal{K}\subseteq \mathcal{K}_{\delta}\coloneqq\bigcup_{i=1}^{K}\mathcal{B}_{\delta}(\vc_i)$ for some $\vc_i\in\mathcal{K}$, $i=1,\ldots,K$. Thus,
\begin{align*}
    \limsup_{u\to\infty}\frac{\log\mathbb{P}_{\vxi}(\{\vz:\vz/u\in\mathcal{K})\}}{q(u)} &\le  \max_{i=1,\ldots,K}\limsup_{u\to\infty}\frac{\log\mathbb{P}_{\vxi}(\{\vz:\vz/u \in \mathcal{B}_{\delta}(\vc_i)\})}{q(u)}\\
    &=
    -\min_{i=1,\ldots,K}\inf_{\vz\in\mathcal{B}_{\delta}(\vc_i)}\lambda(\vz)\\
    &=
    -\inf_{\vz\in\mathcal{K}_{\delta}}\lambda(\vz).
\end{align*}
The first inequality follows from the union bound $\mathbb{P}_{\vxi}(\mathcal{K})\le\sum_{i=1}^K\mathbb{P}_{\vxi}(\mathcal{B}_\delta(\vc_i))\le K\max_{i=1,\ldots,K}\mathbb{P}_{\vxi}(\mathcal{B}_\delta(\vc_i))$ and by taking logarithms and dividing by $q(u)$ which ensures $\log K/q(u)\to 0$ since $K$ is fixed.
The second equality applies the result~\eqref{eq:varadhan} to each $\mathcal{B}_\delta(\vc_i)$.
The third equality rewrites the minimum of infima over finitely many sets as a single infimum over their union $\mathcal{K}_\delta$.
Since the above bound holds for every $\delta>0$, we may take $\delta\to 0$:
\begin{align*}
\limsup_{u\to\infty}\frac{\log\mathbb{P}_{\vxi}(\{\vz:\vz/u\in\mathcal{K})\}}{q(u)} &\le
-\lim_{\delta\to 0}\inf_{\vz\in\mathcal{K}_{\delta}}\lambda(\vz)\\
&= -\inf_{\vz\in\mathcal{K}}\lambda(\vz).
\end{align*}
The final equality holds because of compactness of $\mathcal{K}$ and continuity of $\lambda$ together with $\bigcap_{\delta >0}\mathcal{K}_{\delta}=\mathcal{K}$.

Now, for any open set $\mathcal{O}$ and any $\delta>0$,
choose a near-minimizer $\vz^*\in\mathcal{O}$ satisfying
$\lambda(\vz^*)\le\inf_{\vz\in\mathcal{O}}\lambda(\vz)+\delta$.
Since $\mathcal{O}$ is open, there exists $d>0$ such that $\mathcal{B}_{d}(\vz^*)\subset\mathcal{O}$.
Therefore,
\begin{align*}
\liminf_{u\to\infty}\frac{\log\mathbb{P}_{\vxi}(\{\vz:\vz/u\in\mathcal{O}\})}{q(u)}
&\ge
\liminf_{u\to\infty}\frac{\log\mathbb{P}_{\vxi}(\{\vz:\vz/u\in\mathcal{B}_{d}(\vz^*)\})}{q(u)}\\
&=-\inf_{\vz\in\mathcal{B}_{d}(\vz^*)}\lambda(\vz)\\
&\ge -\lambda(\vz^*)\\
&\ge -\inf_{\vz\in\mathcal{O}}\lambda(\vz)-\delta.
\end{align*}
The first inequality holds by monotonicity of probability, since $\mathcal{B}_{d}(\vz^*)\subset\mathcal{O}$.
The second equality applies \eqref{eq:varadhan}.
The third inequality uses that $\vz^*\in\mathcal{B}_d(\vz^*)$, so the infimum over the ball is at most $\lambda(\vz^*)$.
The fourth inequality follows from the choice of $\vz^*$.
Since $\delta>0$ was arbitrary, taking $\delta\to 0$ yields
\begin{align*}
\liminf_{u\to\infty}\frac{\log\mathbb{P}_{\vxi}(\{\vz:\vz/u\in\mathcal{O}\})}{q(u)}
\ge
-\inf_{\vz\in\mathcal{O}}\lambda(\vz).
\end{align*}

Finally, we extend the upper bound from compact to arbitrary closed sets. By Lemma~\ref{lemma:positive_homo}, $\lambda$ is positively homogeneous (of degree $\alpha>0$), which together with its continuity implies that $\lambda$ has compact level sets, i.e., $\{\vz:\lambda(\vz)\le a\}$ is compact for all $a>0$. This provides the goodness condition required to apply~\citet[Lemma~1.2.18]{dembo2009large}, which yields the extension from compact to closed sets.
\end{proof}

\subsection{Properties of the Limiting Constraint Function}\label{sec:properties_gstar}
This section establishes structural properties of the limit function $g^*$ arising from Assumption~\ref{assum:conti-conv}.
Specifically, Lemma~\ref{Lem:conti-conv_homogeneous} shows that $g^*$ inherits a homogeneity structure from the scaling in Assumption~\ref{assum:conti-conv},  Lemma~\ref{lem:gstar_origin} characterizes the behavior of $g^*$ near the origin,
and Lemma~\ref{lem:convexity_of_g^*} establishes its inherited convexity.
These properties are used in the proof of Proposition~\ref{Lem:UniqueScalingExponents} (uniqueness of scaling exponents),
with the homogeneity and origin properties also playing a supporting role in the large deviation analysis of Section~\ref{secA1}.

\begin{lemma}\label{Lem:conti-conv_homogeneous}
    Under Assumption~\ref{assum:conti-conv}, the limit function $g^*$ is homogeneous in the sense that,
    for all $\vy\in\mathcal{X}^\infty_\gamma,\vw\in\supportset$, and $t>0$,
    \begin{align*}
        g^*(t^\gamma \vy,t\vw)=t^\rho g^*(\vy,\vw).
    \end{align*}
\end{lemma}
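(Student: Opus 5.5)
The plan is to mimic the proof of Lemma~\ref{lemma:positive_homo}: evaluate $g^*$ at the scaled point along a suitable pair of sequences, then re-index the limit variable. Fix $\vy\in\mathcal{X}^\infty_\gamma$, $\vw\in\supportset$ and $t>0$.

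\textbf{Step 1: admissible sequences for $(\vy,\vw)$.} Since $\vy\in\mathcal{X}^\infty_\gamma$, Definition~\ref{def:asymptoset} gives $\{\va_k\}\subset\mathcal{X}$ and $\lambda_k\to\infty$ with $\va_k/\lambda_k^\gamma\to\vy$. I would reparametrize these into a family $\{\vx_u\}\subset\mathcal{X}$ with $\vx_u/u^\gamma\to\vy$ as $u\to\infty$. For the $\vz$-part take $\vz_u\coloneqq u\vw\in\supportset$, using that $\supportset$ is a cone (Remark~\ref{Remark:ClosedCone}).

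\textbf{Step 2: rescale and apply \ref{Eq:Conti-Convergence}.} Set $\nu=u/t$, so $u=t\nu$. Define $\tilde\vx_\nu\coloneqq\vx_{t\nu}$ and $\tilde\vz_\nu\coloneqq\vz_{t\nu}=\nu(t\vw)$. Then
\begin{align*}
\frac{\tilde\vx_\nu}{\nu^\gamma}=t^\gamma\frac{\vx_{t\nu}}{(t\nu)^\gamma}\to t^\gamma\vy,
\qquad
\frac{\tilde\vz_\nu}{\nu}\to t\vw .
\end{align*}
This also shows $t^\gamma\vy\in\mathcal{X}^\infty_\gamma$, so $g^*(t^\gamma\vy,t\vw)$ is defined. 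Applying \ref{Eq:Conti-Convergence} once in the variable $\nu$ and once in $u$ gives
\begin{align*}
g^*(t^\gamma\vy,t\vw)=\lim_{\nu\to\infty}\frac{g(\tilde\vx_\nu,\tilde\vz_\nu)}{\nu^\rho}
=t^\rho\lim_{u\to\infty}\frac{g(\vx_u,\vz_u)}{u^\rho}=t^\rho g^*(\vy,\vw).
\end{align*}
The middle equality is the substitution $u=t\nu$ together with $\nu\to\infty\iff u\to\infty$, which is legitimate under the paper's convention that $u\to\infty$ means along every sequence $u_k\to\infty$.

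\textbf{Main obstacle: Step 1.} Definition~\ref{def:asymptoset} only provides a sequence along some divergent $\lambda_k$, whereas \ref{Eq:Conti-Convergence} is stated for families indexed by all $u\to\infty$.

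I would first try to avoid this by working sequentially. Take $u_k\coloneqq\lambda_k$ and $\vx_{u_k}\coloneqq\va_k$, and pass to a subsequence so that the $\lambda_k$ are increasing. Since the paper defines $\lim_{u\to\infty}$ through sequences, it suffices to apply \ref{Eq:Conti-Convergence} along $u_k$ and along $\nu_k=u_k/t$.

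If a genuine family indexed by all $u>0$ is needed, I would interpolate piecewise-constantly: set $\vx_u\coloneqq\va_k$ for $u\in[\lambda_k,\lambda_{k+1})$. This only works if consecutive ratios satisfy $\lambda_{k+1}/\lambda_k\to1$. I would try to arrange that by inserting convex combinations $(1-\theta)\va_k+\theta\va_{k+1}\in\mathcal{X}$, using convexity of $\mathcal{X}$. If $\gamma<0$ this is harder, because the relevant points shrink toward the origin; there I would instead use $\vzero\in\mathcal{X}$ and convexity, taking $\vx_u\coloneqq(u/\lambda_k)^\gamma\va_k$ for $u\ge\lambda_k$ near $\lambda_k$. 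This is a convex combination of $\va_k$ and $\vzero$, hence lies in $\mathcal{X}$.
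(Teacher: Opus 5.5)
Your proof is correct and is essentially the paper's argument. The paper also substitutes $v=ut$ and relabels the sequences; it simply runs the substitution in the opposite direction, starting from sequences for $(t^\gamma\vy,t\vw)$ and pulling them back to $(\vy,\vw)$. Your Step~1 concerns (that $t^\gamma\vy\in\mathcal{X}^\infty_\gamma$, and that a family indexed by all $u\to\infty$ exists) are points the paper takes for granted from Definition~\ref{def:asymptoset}, and your convexity-based interpolation is a sound way to settle them, e.g.\ $(u/\lambda_k)^\gamma\va_k$ with $\vzero\in\mathcal{X}$ when $\gamma<0$.
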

\begin{proof}
Let $\vy\in\mathcal{X}^\infty_\gamma,\vw\in\supportset$, and $t>0$.
By Assumption~\ref{Eq:Conti-Convergence},
\begin{align}
    g^*(t^\gamma\vy,t\vw)=\lim_{u\to\infty}\frac{g(\vx_u',\vz_u')}{u^\rho}
\end{align}
for any sequences $\{\vx_u'\}\in\mathcal{X}$ and $\{\vz_u'\}\in\supportset$ satisfying $\lim_{u\to\infty}{\vx_u'}/{u^\gamma}=t^\gamma\vy$ and $\lim_{u\to\infty}{\vz_u'}/{u}=\vy$.

We apply a change of variable $v=ut$, which implies $u=v/t$. 
As $u\to\infty$, we also have $v\to\infty$ since $t>0$.
Let us define new sequences $\{\vx_v\}=\{\vx_{v/t}'\}$ and $\{\vz_v\}=\{\vz_{v/t}'\}$.
Substituting $u=v/t$ yields
\begin{align}\label{eq:g^*homogeneity}
   g^*(t^\gamma\vy,t\vw) = \lim_{v\to\infty}\frac{g(\vx_v,\vz_v)}{(v/t)^\rho}=t^\rho\lim_{v\to\infty}\frac{g(\vx_v,\vz_v)}{v^\rho}.
\end{align}
Since these new sequences satisfy
\begin{align}
    \lim_{v\to\infty}\frac{\vx_v}{v^\gamma}=\lim_{v\to\infty}\frac{\vx_{v/t}'}{(ut)^\gamma}=\frac{1}{t^\gamma}\lim_{u\to\infty}\frac{\vx_{u}'}{u^\gamma}=\frac{1}{t^\gamma}(t^\gamma\vy)=\vy
\end{align}
and
\begin{align}
    \lim_{v\to\infty}\frac{\vz_v}{v}=\lim_{v\to\infty}\frac{\vz_{v/t}'}{ut}=\frac{1}{t}\lim_{u\to\infty}\frac{\vz_{u}'}{u}=\frac{1}{t}(t\vw)=\vw,
\end{align}
we have $\lim_{v\to\infty}{g(\vx_v,\vz_v)}/{v^\rho}=g^*(\vy,\vw)$.
Combining with~\eqref{eq:g^*homogeneity}, 
$g^*(t^\gamma\vy,t\vw)=t^\rho g^*(\vy,\vw)$ follows.
\end{proof}

\begin{lemma}\label{lem:gstar_origin}
Under Assumption~\ref{assum:conti-conv}, 
\begin{enumerate}[label=(\roman*), leftmargin=*, ref=\thelemma(\roman*)]
    \item\label{lem:gstar_origin_Part(i)} If $\rho > 0$, then $g^*(\vzero,\vzero) = 0$;
    \item\label{lem:gstar_origin_Part(ii)} If $\rho = 0$, then $g^*(\vzero,\vw) < 0$ for all $\vw \in \supportset$. 
\end{enumerate}
\end{lemma}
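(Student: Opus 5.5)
Both parts follow from the homogeneity of Lemma~\ref{Lem:conti-conv_homogeneous} combined with the continuity of $g^*$ from \ref{Eq:Conti-Convergence}. First we check that $\vzero\in\mathcal{X}^\infty_\gamma$, so that $g^*(\vzero,\cdot)$ is defined. Take any nonzero $\vy\in\mathcal{X}^\infty_\gamma$, which exists by \ref{Assum:NonEmptyHorizon}. The set $\mathcal{X}^\infty_\gamma$ is a cone: if $\va_u/\lambda_u^\gamma\to\vy$, then reparametrizing $\lambda_u\mapsto t^{-1}\lambda_u$ gives $t^\gamma\vy\in\mathcal{X}^\infty_\gamma$. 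It is also closed, by a standard diagonal argument. Letting $t\to0$ along $t^\gamma\vy$ (or $t\to\infty$ when $\gamma<0$) shows $\vzero\in\mathcal{X}^\infty_\gamma$.

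\textbf{Part (i).} Apply Lemma~\ref{Lem:conti-conv_homogeneous} with $\vy=\vzero$ and $\vw=\vzero$. This gives $g^*(\vzero,\vzero)=g^*(t^\gamma\vzero,t\vzero)=t^\rho g^*(\vzero,\vzero)$ for every $t>0$. Choosing any $t\neq1$ and using $\rho>0$, we have $t^\rho\neq1$, which forces $g^*(\vzero,\vzero)=0$.

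\textbf{Part (ii).} With $\rho=0$, the homogeneity becomes $g^*(t^\gamma\vy,t\vw)=g^*(\vy,\vw)$ for all $t>0$.
\begin{itemize}
\item Fix a nonzero $\vy\in\mathcal{X}^\infty_\gamma$. Setting $\vw=\vzero$ gives $g^*(t^\gamma\vy,\vzero)=g^*(\vy,\vzero)<0$ by \ref{eq:away_from_0}.
\item Send $t$ in the direction that makes $t^\gamma\to0$, i.e.\ $t\to0$ if $\gamma>0$ and $t\to\infty$ if $\gamma<0$. Continuity of $g^*$ at $(\vzero,\vzero)$ then yields $g^*(\vzero,\vzero)=g^*(\vy,\vzero)<0$.
\item For general $\vw\in\supportset$, homogeneity with $\vy=\vzero$ gives $g^*(\vzero,t\vw)=g^*(\vzero,\vw)$ for all $t>0$. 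Letting $t\to0$ and using continuity again, $g^*(\vzero,\vw)=g^*(\vzero,\vzero)<0$.
\end{itemize}
Note this uses that $t\vw\in\supportset$, which holds because $\supportset$ is a closed cone (Remark~\ref{Remark:ClosedCone}).

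The main obstacle is the preliminary step: showing that $\vzero$ lies in the domain $\mathcal{X}^\infty_\gamma$, i.e.\ that the asymptotic cone is closed and conic, so that continuity of $g^*$ may be invoked at the origin. Once that is settled, the rest is direct.
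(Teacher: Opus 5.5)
Your proof is correct and follows the paper's argument essentially step for step: part (i) applies the homogeneity of Lemma~\ref{Lem:conti-conv_homogeneous} at $(\vzero,\vzero)$, and part (ii) combines homogeneity with continuity of $g^*$ to get $g^*(\vzero,\vw)=g^*(\vzero,\vzero)=g^*(\vy,\vzero)<0$, where the paper simply performs these two reductions in the opposite order. Your preliminary check that $\vzero\in\mathcal{X}^\infty_\gamma$, via the cone property and closedness of the asymptotic cone, is a point the paper takes for granted, so it is a welcome addition rather than a different route.
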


\begin{proof}
\noindent
\textbf{Part (i)}.
Using the homogeneity property in Lemma~\ref{Lem:conti-conv_homogeneous}, for any $t > 0$, 
\begin{align}
    g^*(\vzero, \vzero)=g^*(t^\gamma \vzero, t\vzero) = t^\rho g^*(\vzero, \vzero).\nonumber
\end{align}
Since this equality holds for all $t>0$, we have $g^*(\vzero, \vzero) = 0$.

\medskip
\noindent
\textbf{Part (ii)}.
We first show $g^*(\vzero,\vw)=g^*(\vzero,\vzero)$, for all $\vw\in\supportset$.
By Lemma~\ref{Lem:conti-conv_homogeneous}, for all $t>0$ and $\vw\in\supportset$,
\begin{align}
    g^*(\vzero,t\vw)=g^*(t^\gamma\vzero,t\vw)
    =t^0g^*(\vzero,\vw) = g^*(\vzero,\vw).\nonumber
\end{align}
Since continuity of $g^*$ implies
\begin{align}
    \lim_{t\to 0^+}g^*(\vzero,t\vw)=g^*(\vzero,\lim_{t\to 0^+}t\vw)
    =g^*(\vzero,\vzero),\nonumber
\end{align}
we have 
\begin{align}\label{eq: 0w=00}
  g^*(\vzero,\vw)=g^*(\vzero,\vzero),\quad\forall \vw\in\supportset. 
\end{align}

Now, we show $g^*(\vzero,\vzero)<0$.
Take any $\vy\in\mathcal{X}^\infty_\gamma\setminus\{\vzero\}$.
From Lemma~\ref{Lem:conti-conv_homogeneous},
\begin{align}\label{eq:homogenity_for(y,0)}
    g^*(t^\gamma\vy,\vzero)=g^*(t^\gamma\vy,t\vzero)=t^\rho g^*(\vy,\vzero)=g^*(\vy,\vzero)\quad\forall t>0.
\end{align}
If $\gamma>0$, since $g^*$ is continuous,
\begin{align}
    g^*(\vzero,\vzero)= \lim_{t\to 0^+}g^*(t^\gamma\vy,\vzero).\nonumber
\end{align}
Similarly, if $\gamma<0$,
\begin{align}
    g^*(\vzero,\vzero)= \lim_{t\to \infty}g^*(t^\gamma\vy,\vzero).\nonumber
\end{align}
In both cases, we have $g^*(\vzero,\vzero)=g^*(\vy,\vzero)$ following from~\eqref{eq:homogenity_for(y,0)}.
Given that $g^*(\vy,\vzero)<0$ from~\ref{eq:away_from_0},
we obtain $g^*(\vzero,\vzero)<0$.

Combining with \eqref{eq: 0w=00}, we conclude $g^*(\vzero,\vw)<0$, for all $\vw\in\supportset$.
\end{proof}

\begin{lemma}\label{lem:convexity_of_g^*}
Under Assumption~\ref{assum:conti-conv}, for any fixed $\vw\in\supportset$, the limit function $g^*(\cdot,\vw)$ is convex on $\mathcal{X}^\infty_\gamma$.
\end{lemma}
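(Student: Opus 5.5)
Fix $\vw\in\supportset$ and take $\vy_1,\vy_2\in\mathcal{X}^\infty_\gamma$ and $\theta\in[0,1]$. The plan is to obtain convexity of $g^*(\cdot,\vw)$ as a pointwise limit of convex functions, using the continuous convergence in~\ref{Eq:Conti-Convergence}. By Definition~\ref{def:asymptoset}, there are sequences $\{\va^{(i)}_k\}\subset\mathcal{X}$ and $\lambda^{(i)}_k\to\infty$ with $\va^{(i)}_k/(\lambda^{(i)}_k)^\gamma\to\vy_i$ for $i=1,2$. These two sequences run on different scales, so I will first re-parameterize them onto the common scale $u$. The goal is to build, for each $u$, points $\vx^{(i)}_u\in\mathcal{X}$ with $\vx^{(i)}_u/u^\gamma\to\vy_i$.

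\textbf{Step 1 (realizing directions on a common scale).} For $\gamma>0$, I will use that $\mathcal{X}$ is closed and convex. Then $\mathcal{X}^\infty_\gamma$ is the recession cone $\mathcal{X}^\infty$, so for a fixed $\bar\vx\in\mathcal{X}$ the choice $\vx^{(i)}_u\coloneqq\bar\vx+u^\gamma\vy_i$ lies in $\mathcal{X}$ and satisfies $\vx^{(i)}_u/u^\gamma\to\vy_i$. For $\gamma<0$, $\mathcal{X}^\infty_\gamma$ is the tangent cone at $\vzero\in\mathcal{X}$. Since $\mathcal{X}$ is convex, this is the closure of $\mathrm{cone}(\mathcal{X})$. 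For $\vy$ in the cone itself, $\vy=c\vx$ with $\vx\in\mathcal{X}$, and the choice $u^\gamma\vy\in\mathcal{X}$ works once $cu^\gamma\le1$. For boundary directions, I will take a diagonal sequence of such cone directions converging to $\vy$ and choose the index slowly enough in $u$. This diagonal construction, together with checking that closedness and convexity of $\mathcal{X}$ give exactly these cone descriptions, is the step I expect to be the main technical obstacle.

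\textbf{Step 2 (passing convexity to the limit).} Set $\vz_u\coloneqq u\vw\in\supportset$, which lies in $\supportset$ because $\supportset$ is a closed cone by Remark~\ref{Remark:ClosedCone}. Define $\vx_u\coloneqq\theta\vx^{(1)}_u+(1-\theta)\vx^{(2)}_u$. Then $\vx_u\in\mathcal{X}$ by convexity, and $\vx_u/u^\gamma\to\theta\vy_1+(1-\theta)\vy_2$. This limit point lies in $\mathcal{X}^\infty_\gamma$ because that set is a closed convex cone; alternatively, the sequence $\vx_u$ itself witnesses membership. Convexity of $g(\cdot,\vz_u)$ gives
\begin{align*}
\frac{g(\vx_u,\vz_u)}{u^\rho}\le\theta\frac{g(\vx^{(1)}_u,\vz_u)}{u^\rho}+(1-\theta)\frac{g(\vx^{(2)}_u,\vz_u)}{u^\rho}.
\end{align*}
Letting $u\to\infty$ and applying~\eqref{eq:direct_conti_conv} to all three terms yields
\begin{align*}
g^*(\theta\vy_1+(1-\theta)\vy_2,\vw)\le\theta g^*(\vy_1,\vw)+(1-\theta)g^*(\vy_2,\vw).
\end{align*}
This is the claimed convexity.
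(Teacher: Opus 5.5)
Your proposal is correct, and its core (Step 2: take a convex combination of realizing sequences, set $\vz_u=u\vw$, apply convexity of $g(\cdot,\vz_u)$, divide by $u^\rho$ and pass to the limit using \ref{Eq:Conti-Convergence}) is exactly the paper's argument. The only difference is your Step 1: the paper asserts that sequences $\vx_{i,u}\in\mathcal{X}$ with $\vx_{i,u}/u^\gamma\to\vy_i$ on the common scale $u$ exist ``by the definition of $\mathcal{X}^\infty_\gamma$,'' while you actually justify this via the recession-cone and tangent-cone descriptions, which use that $\mathcal{X}$ is closed and convex. The justification is genuinely needed, since the definition alone only gives sequences along arbitrary scales $\lambda_u$, so your version is slightly more complete.
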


\begin{proof}
Let $\vw\in\supportset$ be fixed.
Choose any $\vy_1,\vy_2\in\mathcal{X}^\infty_\gamma$ and any scalar $t\in [0,1]$.
We aim to show that 
\begin{align*}
    g^*(t\vy_1+(1-t)\vy_2,\vw)\le tg^*(\vy_1,\vw)+ (1-t)g^*(\vy_2,\vw).
\end{align*}

By the definition of $\mathcal{X}^\infty_\gamma$, there exist sequences $\{\vx_{1,u}\}\subset\mathcal{X}$ and $\{\vx_{2,u}\}\subset\mathcal{X}$ such that $\lim_{u\to\infty}\vx_{1,u}/u^\gamma=\vy_1$ and $\lim_{u\to\infty}\vx_{2,u}/u^\gamma=\vy_2$.
Because the feasible set $\mathcal{X}$ is convex, the convex combination $\vx_u\coloneqq t\vx_{1,u}+(1-t)\vx_{2,u}$ satisfies $\vx_u\in\mathcal{X}$ for all $u$.
Hence, $\lim_{u \to \infty} {\vx_u}/{u^\gamma} = t \vy_1 + (1-t)\vy_2$.

Now, let $\{\vz_u\} \subset \supportset$ be any sequence satisfying $\lim_{u \to \infty} {\vz_u}/{u} = \vw$. 
Such a sequence always exists, since $\supportset$ is a closed cone (Remark~\ref{Remark:ClosedCone}).
Since the constraint function $g(\cdot, \vz)$ is convex for any fixed $\vz$, for each $u$, we obtain
\begin{align*}
    g(\vx_u, \vz_u) = g(t \vx_{1,u} + (1-t)\vx_{2,u}, \vz_u) \le t g(\vx_{1,u}, \vz_u) + (1-t) g(\vx_{2,u}, \vz_u).
\end{align*}
Dividing both sides of the inequality by $u^\rho$ yields
\begin{align*}
   \frac{g(\vx_u, \vz_u)}{u^\rho} \le t \frac{g(\vx_{1,u}, \vz_u)}{u^\rho} + (1-t) \frac{g(\vx_{2,u}, \vz_u)}{u^\rho}. 
\end{align*}
Taking the limit as $u \to \infty$ on both sides, and applying the continuous convergence property from Assumption~\ref{Eq:Conti-Convergence}, we obtain
\begin{align*}
    \lim_{u \to \infty} \frac{g(\vx_u, \vz_u)}{u^\rho} &\le t \lim_{u \to \infty} \frac{g(\vx_{1,u}, \vz_u)}{u^\rho} + (1-t) \lim_{u \to \infty} \frac{g(\vx_{2,u}, \vz_u)}{u^\rho},
\end{align*}
which simplifies to
\[
    g^*(t \vy_1 + (1-t)\vy_2, \vw) \le t g^*(\vy_1, \vw) + (1-t) g^*(\vy_2, \vw).\eqno
\]
\end{proof}

\subsection{Proof of Proposition~\ref{Lem:UniqueScalingExponents}}\label{sec:proof_of_uniqueness} 
\begin{proof}
Suppose there exist two pairs $(\gamma_1,\rho_1,g_1^*)$ and $(\gamma_2,\rho_2,g^*_2)$ satisfying Assumption~\ref{assum:conti-conv}.
We prove uniqueness by showing that the following cases lead to contradictions:
\begin{enumerate}[label=(\roman*),leftmargin=*]
    \item $\gamma_1=\gamma_2$ and $\rho_1\neq\rho_2$;
    \item $\gamma_1\neq\gamma_2$ and $\rho_1\neq\rho_2$;
    \item $\gamma_1\neq\gamma_2$ and $\rho_1=\rho_2$.
\end{enumerate}
\medskip 
\noindent
\textbf{Case (i)}. 
Without loss of generality, assume $\rho_1>\rho_2$ and $\gamma=\gamma_1=\gamma_2$.
Fix $\vy\in\mathcal{X}^\infty_{\gamma}\setminus\{\vzero\}$ and $\vw=\vzero$. 
By the definition of $\mathcal{X}^\infty_{\gamma}$, there exists a sequence $\{\vx_u\}\subset\mathcal{X}$ such that $\lim_{u\to\infty}\vx_u/u^\gamma=\vy$.
From Assumption~\ref{Eq:Conti-Convergence},
we have
\begin{align*}
g_1^*(\vy,\vzero)=\lim_{u\to\infty}\frac{g(\vx_u,\vzero)}{u^{\rho_1}}
&=\lim_{u\to\infty}\frac{g(\vx_u,\vzero)}{u^{\rho_2}}\cdot\frac{u^{\rho_2}}{u^{\rho_1}}
    \\&=g_2^*(\vy,\vzero)\cdot\lim_{u\to\infty}\frac{u^{\rho_2}}{u^{\rho_1}}
    \\&=0,
\end{align*}
contradicting~\ref{eq:away_from_0}.

\medskip 
\noindent
\textbf{Case (ii)}. Without loss of generality, assume $\rho_1>\rho_2$. We examine four subcases:
\begin{enumerate}[label=(\alph*),leftmargin=*]
    \item $\gamma_1<\gamma_2$;
    \item $0<\gamma_2<\gamma_1$;
    \item $\gamma_2<\gamma_1<0$;
    \item $\gamma_2<0<\gamma_1$.
\end{enumerate}
\noindent
\textbf{Case (iia)}. 
Fix $\vy\in\mathcal{X}^\infty_{\gamma_1}\setminus\{\vzero\}$.
Then, there exists a sequence $\{\vx_u\}\subset\mathcal{X}$ such that $\lim_{u\to\infty}\vx_u/u^{\gamma_1}=\vy$.
From Assumption~\ref{Eq:Conti-Convergence}, we have
\begin{align}\label{eqn:unique_case_ii_a}
    g_1^*(\vy,\vzero)
    =\lim_{u\to\infty}\frac{g(\vx_u,\vzero)}{u^{\rho_1}}
    &=\lim_{u\to\infty}\frac{g(\vx_u,\vzero)}{u^{\rho_2}}\cdot\frac{u^{\rho_2}}{u^{\rho_1}}.
\end{align}
Since $\gamma_1<\gamma_2$,
$\{\vx_u\}$ satisfies
\begin{align*}
    \lim_{u\to\infty}\frac{\vx_u}{u^{\gamma_2}}=\lim_{u\to\infty}\frac{\vx_u}{u^{\gamma_1}}\cdot\frac{u^{\gamma_1}}{u^{\gamma_2}}=\vzero.    
\end{align*}
Note that $\vzero\in\mathcal{X}^\infty_{\gamma_2}$ as it is a cone.
Therefore, we obtain $\lim_{u\to\infty}{g(\vx_u,\vzero)}/{u^{\rho_2}}=g^*_2(\vzero,\vzero)$,
which is a finite value.
Since $\rho_1>\rho_2$,
combining with~\eqref{eqn:unique_case_ii_a} yields $g_1^*(\vy,\vzero)=0,$
contradicting Assumption~\ref{eq:away_from_0}.

\medskip
\noindent
\textbf{Case (iib)}. 
From Assumption~\ref{eq:nonempty_set_g^*}, there exist
$\vy\in\mathcal{X}^\infty_{\gamma_1}\setminus\{\vzero\}$ and $\vw\in\supportset$
such that $g^*_1(\vy,\vw)>0$. Fix such $\vy$ and $\vw$.
Then, there exist sequences $\{\vx_u\}\subset\mathcal{X}$ and $\{\vz_u\}\subset\supportset$ such that
\begin{align}
    \lim_{u\to\infty}\frac{\vx_u}{u^{\gamma_1}}=\vy\quad\text{and}\quad
    \lim_{u\to\infty}\frac{\vz_u}{u}=\vw.
\end{align}
Let $v=u^{\gamma_1/\gamma_2}$. 
Since $\gamma_1/\gamma_2>1$, $v\to\infty$ as $u\to\infty$.
By Assumption~\ref{Eq:Conti-Convergence},
\begin{equation}\label{eq:unique_ii_b}
\begin{aligned}
    g_1^*(\vy,\vw)=\lim_{u\to\infty}\frac{g(\vx_u,\vz_u)}{u^{\rho_1}}
    &=
    \lim_{u\to\infty}\frac{g(\vx_u,\vz_u)}{u^{\left(\frac{\gamma_1}{\gamma_2}\right)\rho_2}}\cdot
    \frac{u^{\left(\frac{\gamma_1}{\gamma_2}\right)\rho_2}}{u^{\rho_1}}\\
    &=\lim_{v\to\infty}\frac{g(\vx_{v^{\gamma_2/\gamma_1}},\vz_{v^{\gamma_2/\gamma_1}})}{v^{\rho_2}}\cdot
    \frac{v^{\rho_2}}{v^{\left(\frac{\gamma_2}{\gamma_1}\right)\rho_1}}.
\end{aligned}    
\end{equation}
Here, $\vx_{v^{\gamma_2/\gamma_1}}$ and $\vz_{v^{\gamma_2/\gamma_1}}$ denote the re-parameterization of the sequence $\{\vx_u\}$ and $\{\vz_u\}$ using $u=v^{\gamma_2/\gamma_1}$.
Given that
\begin{align}    \lim_{v\to\infty}\frac{\vx_{v^{\gamma_2/\gamma_1}}}{v^{\gamma_2}}=\lim_{u\to\infty}\frac{\vx_u}{u^{\gamma_1}}=\vy
\end{align}
and
\begin{align}    \lim_{v\to\infty}\frac{\vz_{v^{\gamma_2/\gamma_1}}}{v}=\lim_{u\to\infty}\frac{\vz_u}{u^{\gamma_1/\gamma_2}}=\lim_{u\to\infty}\frac{\vz_u}{u}\cdot\frac{u}{u^{\gamma_1/\gamma_2}}=\vzero,
\end{align}
we obtain
\begin{align}    \lim_{v\to\infty}\frac{g(\vx_{v^{\gamma_2/\gamma_1}},\vz_{v^{\gamma_2/\gamma_1}})}{v^{\rho_2}}=g_2^*(\vy,\vzero).
\end{align}
Therefore,
equality~\eqref{eq:unique_ii_b} reduces to
\begin{align}
    g^*_1(\vy,\vw)=\begin{cases}
        0,&\quad\text{if }\rho_2<\left(\frac{\gamma_2}{\gamma_1}\right)\rho_1,\\
        g_2^*(\vy,\vzero),&\quad\text{if }\rho_2=\left(\frac{\gamma_2}{\gamma_1}\right)\rho_1,\\
        -\infty,&\quad\text{if }\rho_2>\left(\frac{\gamma_2}{\gamma_1}\right)\rho_1.
    \end{cases}
\end{align}
Furthermore, $g^*_2(\vy,\vzero)<0$ follows from Assumption~\ref{eq:away_from_0}. As a result, in each case, $g_1^*(\vy,\vw)\leq 0$ for all $\vw$, contradicting Assumption~\ref{eq:nonempty_set_g^*}.

\medskip
\noindent
\textbf{Case (iic)}. Let $v=u^{\gamma_2/\gamma_1}$. By the same argument of Case (iib), this case also leads to a contradiction.

\medskip
\noindent
\textbf{Case (iid)}.
Choose any $\vy\in\mathcal{X}^\infty_{\gamma_2}$ and $\vw\in\supportset$.
By the definition~\ref{def:asymptoset} and Remark~\ref{Remark:ClosedCone}, there exist sequences $\{\vx_u\}$ and $\{\vz_u\}$ such that
$\lim_{u\to\infty}\vx_u/u^{\gamma_2}=\vy$ and
$\lim_{u\to\infty}\vz_u/u=\vw$.
Since $\gamma_2<0<\gamma_1$, it follows $\lim_{u\to\infty}\vx_u/u^{\gamma_1}=\vzero$.
Therefore, given $\rho_2<\rho_1$, Assumption~\ref{Eq:Conti-Convergence} yields
\begin{align}
    g_1^*(\vzero,\vw)=\lim_{u\to\infty}\frac{g(\vx_u,\vz_u)}{u^{\rho_1}}
    =\lim_{u\to\infty}\frac{g(\vx_u,\vz_u)}{u^{\rho_2}}\cdot\frac{u^{\rho_2}}{u^{\rho_1}}
    =0.
\end{align}
Because $\vw\in\supportset$ was chosen arbitrarily, this establishes 
\begin{align}\label{eq:proof_unique_case_iid}
  g_1^*(\vzero,\vw)=0,\quad\forall\vw\in\supportset.
\end{align}

Next, we demonstrate this leads to a contradiction.
Choose any $\vy_1\in\mathcal{X}^\infty_{\gamma_1}$.
By Assumption~\ref{eq:nonempty_set_g^*}, there exists a $\vw_1\in\supportset$ such that $g_1^*(\vy_1,\vw_1)>0$.
By Lemma~\ref{lem:convexity_of_g^*}, $g_1^*(\cdot, \vw)$ is convex.
We apply the definition of convexity.
For any $t\in(0,1)$, we have $t^{\gamma_1} \in (0, 1)$ since $\gamma_1 > 0$. 
This gives
\begin{align}
    g_1^*(t^{\gamma_1}\vy_1,t\vw_1)&\le t^{\gamma_1}g_1^*(\vy_1,t\vw_1)+ (1-t^{\gamma_1})g_1^*(\vzero,t\vw_1)\\
    &=t^{\gamma_1}g_1^*(\vy_1,t\vw_1),
\end{align}
where the last equality follows from~\eqref{eq:proof_unique_case_iid}.
Furthermore, since Lemma~\ref{Lem:conti-conv_homogeneous} states
$t^{\rho_1}g_1^*(\vy_1,\vw_1)=g_1^*(t^{\gamma_1}\vy_1,t\vw_1)$, we obtain
\begin{align}
    t^{\rho_1}g_1^*(\vy_1,\vw_1)\le t^{\gamma_1}g_1^*(\vy_1,t\vw_1),\quad\forall t\in(0,1).
\end{align}
Given that $g_1^*$ is continuous, taking the limit as $t \to 0^+$ yields
\begin{align}
    \lim_{t\to 0^+}g_1^*(\vy_1,t\vw_1)=g_1^*(\vy_1,\vzero)<0,
\end{align}
where the strict negativity follows from Assumption~\ref{eq:away_from_0}.
This implies there exists $t_0\in(0,1)$ such that $g_1^*(\vy_1,t\vw_1)<0$ for all $t\in(0,t_0)$.
Consequently, it follows that
\begin{align}
    t^{\rho_1}g_1^*(\vy_1,\vw_1)\le t^{\gamma_1}g_1^*(\vy_1,t\vw_1)<0,\quad\forall t\in(0,t_0).
\end{align}
However, this contradicts our initial choice of $\vy_1$ and $\vw_1$, which guaranteed $g_1^*(\vy_1,\vw_1)>0$ and, by extension, $t^{\rho_1}g_1^*(\vy_1,\vw_1)>0$ for all $t>0$.

\medskip 
\noindent
\textbf{Case (iii)}. Without loss of generality, assume $\gamma_1<\gamma_2$ and $\rho=\rho_1=\rho_2$. 
We divide the proof into two subcases:
\begin{enumerate}[label=(\alph*),leftmargin=*]
    \item $\rho>0$;
    \item $\rho=0$.
\end{enumerate}
\noindent
\textbf{Case (iiia)}.
Fix $\vy\in\mathcal{X}^\infty_{\gamma_1}$ and $\vw=\vzero$. 
By Assumption~\ref{Eq:Conti-Convergence}, there exist $\{\vx_u\}\in\mathcal{X}$ and $\{\vz_u\}\in\supportset$ such that
\begin{align}
    \lim_{u\to\infty}\frac{\vx_u}{u^{\gamma_1}}=\vy\quad\text{and}\quad \lim_{u\to\infty}\frac{\vz_u}{u}=\vw=\vzero
\end{align}
and
\begin{align} \lim_{u\to\infty}\frac{g(\vx_u,\vz_u)}{u^{\rho}}=g_1^*(\vy,\vzero).
\end{align}

Since $\gamma_1<\gamma_2$, we have
\begin{align}
    \lim_{u\to\infty}\frac{\vx_u}{u^{\gamma_2}}=\lim_{u\to\infty}\frac{\vx_u}{u^{\gamma_1}}\cdot\frac
    {u^{\gamma_1}}{u^{\gamma_2}}=\vzero,
\end{align}
thus, by Assumption~\ref{Eq:Conti-Convergence},
\begin{align}
\lim_{u\to\infty}\frac{g(\vx_u,\vz_u)}{u^\rho}=g_2^*(\vzero,\vzero),
\end{align}
which implies $g_1^*(\vy,\vzero)=g_2^*(\vzero,\vzero)$.
However, this leads to a contradiction since $g_1^*(\vy,\vzero)<0$ from Assumption~\ref{eq:away_from_0} and $g_2^*(\vzero,\vzero)=0$ from Lemma~\ref{lem:gstar_origin_Part(i)}.

\medskip
\noindent
\textbf{Case (iiib)}.
Assumption~\ref{eq:nonempty_set_g^*} ensures that there exist $\vy_0\in\mathcal{X}^\infty_{\gamma_1}\setminus\{\vzero\}$ and $\vw_0\in\supportset$ such that $g_1^*(\vy_0,\vw_0)>0$. 
With the exactly same argument as Case (iiia), we derive $g_1^*(\vy_0,\vw_0)=g_2^*(\vzero,\vw_0)$.
However, Lemma~\ref{lem:gstar_origin_Part(ii)} states that $g_2^*(\vzero,\vw_0)<0$, contradicting our selection that $g_1^*(\vy_0,\vw_0)>0$.
\end{proof}

\subsection{Large Deviation Principle for Chance Constraints}\label{secA1}
This section establishes an \ac{ldp} that characterizes the asymptotic behavior of chance constraints.
Building on the primitive \ac{ldp} of Lemma~\ref{Lem:primitive_LDP} and the properties of $g^*$ from Section~\ref{sec:properties_gstar}, we define a rate function $I(\vy)$ that captures the exponential decay rate of the violation probability.
The main result of this section is Proposition~\ref{Prop:LDP}, which provides matching upper and lower bounds on this decay.
This serves as theoretical foundations for proving Theorem~\ref{thm:LT_feasible}.

\begin{definition}
    For every $\vy\in\mathcal{X}^\infty_\gamma$, the rate function is defined as
    \begin{align}
    I(\vy)\coloneqq\inf_{\vw\in\supportset}\{\lambda(\vw):g^*(\vy,\vw)\geq 0\}.
    \end{align}
\end{definition}
For a given $\{\vx_u\}\subset\mathcal{X}$, rate function quantifies the asymptotic decay rate of the violation probability 
$\mathbb{P}_{\vxi}(\{\vz:g(\vx_u,\vz)>0\})$ as $u\to\infty$.
We begin by establishing key properties of $g^*$ and $I(\vy)$.

\begin{lemma}\label{lem:uniform_separation}
Let $\mathcal{K}\subset\mathcal{X}^\infty_\gamma\setminus\{\vzero\}$ be any non-empty compact set.
Then, there exists a constant $\delta>0$ such that, for  all $\vy\in \mathcal{K}$ and for all $\vw\in\supportset$ with $\|\vw\|<\delta$, $g^*(\vy,\vw)<0$.
\end{lemma}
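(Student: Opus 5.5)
The plan is a compactness argument by contradiction that combines the continuity of $g^*$ on $\mathcal{X}^\infty_\gamma\times\supportset$ from Assumption~\ref{Eq:Conti-Convergence} with the strict negativity $g^*(\vy,\vzero)<0$ for every $\vy\in\mathcal{X}^\infty_\gamma\setminus\{\vzero\}$ from Assumption~\ref{eq:away_from_0}. Suppose no such $\delta$ exists. Then for each $k\in\mathbb{N}$ we can pick $\vy_k\in\mathcal{K}$ and $\vw_k\in\supportset$ with $\|\vw_k\|<1/k$ and $g^*(\vy_k,\vw_k)\ge 0$.

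Since $\mathcal{K}$ is compact, we pass to a subsequence with $\vy_k\to\bar\vy\in\mathcal{K}$. Because $\mathcal{K}\subset\mathcal{X}^\infty_\gamma\setminus\{\vzero\}$, we have $\bar\vy\neq\vzero$ and $\bar\vy\in\mathcal{X}^\infty_\gamma$. Also $\vw_k\to\vzero$, and $\vzero\in\supportset$ because $\supportset$ is a closed cone (Remark~\ref{Remark:ClosedCone}). So $(\vy_k,\vw_k)\to(\bar\vy,\vzero)$ inside the domain $\mathcal{X}^\infty_\gamma\times\supportset$ of $g^*$. Continuity of $g^*$ then gives
\begin{align*}
g^*(\bar\vy,\vzero)=\lim_{k\to\infty}g^*(\vy_k,\vw_k)\ge 0,
\end{align*}
which contradicts Assumption~\ref{eq:away_from_0}. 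Hence some $\delta>0$ works uniformly over $\mathcal{K}$.

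An equivalent direct route avoids the contradiction. The map $\vy\mapsto g^*(\vy,\vzero)$ is continuous and strictly negative on the compact set $\mathcal{K}$, so it attains a maximum $-\eta<0$. Uniform continuity of $g^*$ on the compact set $\mathcal{K}\times(\supportset\cap\mathcal{B}_1)$ then yields $\delta\in(0,1]$ such that $|g^*(\vy,\vw)-g^*(\vy,\vzero)|<\eta$ whenever $\|\vw\|<\delta$, and hence $g^*(\vy,\vw)<0$.

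The only point needing care is that every limit point stays in the domain where both continuity and Assumption~\ref{eq:away_from_0} apply. The limit $\bar\vy$ must be nonzero, which holds because $\mathcal{K}$ is a compact subset of $\mathcal{X}^\infty_\gamma\setminus\{\vzero\}$ and therefore contains its limit points. The limit $\vzero$ must lie in $\supportset$, which the closed-cone property guarantees. Once these are checked, the argument is routine.
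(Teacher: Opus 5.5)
Your proof is correct and follows essentially the same compactness-by-contradiction argument as the paper: pick $\vy_k\in\mathcal{K}$ and $\vw_k\in\supportset$ with $\|\vw_k\|<1/k$ and $g^*(\vy_k,\vw_k)\ge 0$, pass to a convergent subsequence, and use continuity of $g^*$ to reach $g^*(\vy_0,\vzero)\ge 0$, contradicting Assumption~\ref{eq:away_from_0}. Your explicit check that $\vzero\in\supportset$ via the closed-cone property, which the paper leaves implicit, and your uniform-continuity variant on $\mathcal{K}\times(\supportset\cap\mathcal{B}_1)$ are both valid.
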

\begin{proof}
    We prove by contradiction.
    Suppose, for any $\delta>0$, there exist some $\vy\in \mathcal{K}$ and $\vw\in\supportset$ with $\|\vw\|<\delta$ such that $g^*(\vy,\vw)\geq 0$.
    
    Let us construct a sequence by choosing $\delta_k={1}/{k}$, $k\in\mathbb{N}$. 
    For each $k$, there exists a pair $(\vy_k,\vw_k)$ such that $\vy_k\in \mathcal{K}$, $\|\vw_k\|<{1}/{k}$ and $g^*(\vy_k,\vw_k)\ge 0$. 
    Since $\mathcal{K}$ is compact, there exists a subsequence of $\{\vy_k\}$ that converges to a limit within $\mathcal{K}$. 
    For notational simplicity, we denote this convergent subsequence by $\{\vy_k\}$ as well, and its limit by $\vy_0\in \mathcal{K}$. 
    For $\{\vw_k\}$, the condition $\|\vw_k\|<{1}/{k}$ implies that $\vw_k\to\vzero$ as $k\to\infty$.

    Since $g^*$ is continuous from Assumption~\ref{Eq:Conti-Convergence}, we have
    \begin{align*}
        \lim_{k\to\infty}g^*(\vy_k,\vw_k)=g^*\left(\lim_{k\to\infty}\vy_k, \lim_{k\to\infty}\vw_k \right)=g^*(\vy_0,\vzero).
    \end{align*}
    Since $g^*(\vy_k,\vw_k)\geq0$ for all $k$, 
    $g^*(\vy_0,\vzero)\geq0.$
    However, this is a contradiction of~\ref{eq:away_from_0}.
\end{proof}

\begin{lemma}\label{Lem:UniformPostiiveRateFunction}
Let $\mathcal{K}\subset\mathcal{X}^\infty_\gamma\setminus\{\vzero\}$ be any non-empty compact set.
Then, there exists constant $c>0$ and $M>0$ such that $c\le I(\vy)\le M$, for all $\vy\in \mathcal{K}$.
\end{lemma}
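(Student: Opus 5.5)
The lower and upper bounds will be proved separately, using Lemma~\ref{lem:uniform_separation} for the lower bound and a compactness argument built on Assumption~\ref{eq:nonempty_set_g^*} for the upper bound.

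\textbf{Lower bound.} Apply Lemma~\ref{lem:uniform_separation} to $\mathcal{K}$ to obtain $\delta>0$ such that $g^*(\vy,\vw)<0$ for every $\vy\in\mathcal{K}$ and every $\vw\in\supportset$ with $\|\vw\|<\delta$. Consequently, for every $\vy\in\mathcal{K}$, each $\vw$ that is feasible in the infimum defining $I(\vy)$ satisfies $\|\vw\|\ge\delta$. Write $\vw=\|\vw\|\vtheta$ with $\vtheta$ in the compact set $\supportset\cap\{\|\vtheta\|=1\}$. Positive homogeneity (Lemma~\ref{lemma:positive_homo}) gives $\lambda(\vw)=\|\vw\|^\alpha\lambda(\vtheta)$. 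By continuity of $\lambda$ and Assumption~\ref{assum:light-tail}, $\lambda$ attains a positive minimum $\underline\lambda$ on the unit sphere intersected with $\supportset$. Hence $\lambda(\vw)\ge\delta^\alpha\underline\lambda$, and we may take $c:=\delta^\alpha\underline\lambda>0$.

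\textbf{Upper bound.} Fix $\vy_0\in\mathcal{K}$. By Assumption~\ref{eq:nonempty_set_g^*} there is $\vw_0\in\supportset$ with $g^*(\vy_0,\vw_0)>0$. Since $g^*$ is continuous, there is a relatively open neighborhood $U_{\vy_0}$ of $\vy_0$ in $\mathcal{X}^\infty_\gamma$ on which $g^*(\vy,\vw_0)>0$. For all $\vy\in U_{\vy_0}$ the point $\vw_0$ is feasible, so $I(\vy)\le\lambda(\vw_0)$. These neighborhoods cover $\mathcal{K}$, and compactness yields finitely many $\vy_1,\dots,\vy_K$ with associated $\vw_1,\dots,\vw_K$. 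We may then take $M:=\max_i\lambda(\vw_i)<\infty$.

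The only delicate point is the lower bound. There the positivity of the infimum must hold uniformly over $\mathcal{K}$, and this uniformity is exactly what Lemma~\ref{lem:uniform_separation} supplies together with the compactness of the unit sphere in $\supportset$. The upper bound requires only the local openness argument above.
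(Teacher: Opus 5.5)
Your proposal is correct and follows essentially the same route as the paper. The lower bound comes from Lemma~\ref{lem:uniform_separation} together with the positive homogeneity of $\lambda$ and its strictly positive minimum on $\supportset\cap\{\vw:\|\vw\|=1\}$; the upper bound comes from Assumption~\ref{eq:nonempty_set_g^*}, continuity of $g^*$, and a finite subcover of $\mathcal{K}$ (taking neighborhoods relatively open in $\mathcal{X}^\infty_\gamma$, as you do, is slightly more careful than the paper's phrasing, since $g^*$ is only defined on $\mathcal{X}^\infty_\gamma\times\supportset$).
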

\begin{proof}
    We begin by establishing the lower bound.
    From Lemma~\ref{lem:uniform_separation}, there exists a $\delta>0$ such that $\vw\in\supportset$ with $\|\vw\|<\delta$ implies $g^*(\vy,\vw)<0$, for all $\vy\in \mathcal{K}$. 
    In other words, for all $\vy\in\mathcal{K}$,
    \begin{align*}
        \{\vw\in\supportset:g^*(\vy,\vw)\geq 0\}\subseteq\{\vw\in\supportset:\|\vw\|\geq\delta\}.
    \end{align*}
    Hence, $I(\vy)\geq\inf_{\vw\in\supportset}\{\lambda(\vw):\|\vw\|\ge\delta\}.$

    Set $c\coloneqq\inf_{\vw\in\supportset}\{\lambda(\vw):\|\vw\|\ge\delta\}$.
    Since $\lambda$ is positively homogeneous of degree $\alpha>0$ from Lemma~\ref{lemma:positive_homo} and $\|\vw\|^\alpha\ge\delta^\alpha$ for any $\vw$ with $\|\vw\|\geq\delta$,
    \begin{align*}
    c=\inf_{\vw\in\supportset,\|\vw\|\ge\delta}\lambda\left(\|\vw\|\cdot \frac{\vw}{\|\vw\|}\right)
    &=\inf_{\vw\in\supportset,\|\vw\|\ge\delta}\|\vw\|^\alpha\cdot\lambda\left(\frac{\vw}{\|\vw\|}\right)\\
    &\geq\delta^\alpha\inf_{\vw\in\supportset\cap\{\vw':\|\vw'\|=1\}}\lambda(\vw).
    \end{align*}
    By Assumption~\ref{assum:light-tail}, $\lambda(\vw)>0$ for all $\vw$ on the unit sphere within $\supportset$. Since $\lambda$ is continuous and the set $\supportset\cap\{\vw':\|\vw'\|=1\}$ is compact, the minimum of $\lambda$ on this set is attained and is strictly positive. As $\delta>0$ and $\alpha>0$, we conclude that $c>0$.

    Next, we derive the upper bound. 
    For any $\vy$, Assumption~\ref{eq:nonempty_set_g^*} ensures the existence of a point $\vw_{\vy} \in \supportset$ such that $g^*(\vy, \vw_{\vy}) > 0$. 
    Furthermore, since $g^*$ is continuous by Assumption~\ref{Eq:Conti-Convergence},
    there exists an open neighborhood $\mathcal{N}_{\vy}$ of $\vy$ such that
    \begin{align}
        g^*(\vy',\vw_{\vy})>0,\quad\forall \vy'\in\mathcal{N}_{\vy}.
    \end{align}
    Since $\mathcal{K}$ is compact, there exists a finite set $\{\vy_1,\ldots,\vy_m\}$ such that $\mathcal{K}\subseteq\cup_{i=1}^m \mathcal{N}_{\vy_i}$.
    Let us define the finite constant $M\coloneqq\max_{i=1,\ldots,m}\lambda(\vw_{\vy_i})$.
    
    Choose any $\vy\in\mathcal{K}$.
    There must exist an index $j\in\{1,\ldots,m\}$ such that $\vy\in\mathcal{N}_{\vy_j}$.
    Thus, it follows that
    \begin{align}
        I(\vy)=\inf_{\vw\in\supportset}\{\lambda(\vw)\colon g^*(\vy,\vw)\ge 0\}\le
        \lambda(\vw_{\vy_j})
        \le M.
    \end{align}   
\end{proof}

\begin{proposition}\label{Prop:LDP}
    Let $\{\vy_u\}$ be any sequence such that $u^\gamma\vy_u \in \mathcal{X}$ for all $u$, and $\lim_{u\to\infty}\vy_u = \vy_0 \in \mathcal{X}^\infty_\gamma$.
    Then, the following limits hold:
    \begin{align}\label{eq:Uniform_LDP_LB}
    \liminf_{u\to\infty}\frac{\log{\mathbb{P}_{\vxi}\left(\left\{\vz:g(u^\gamma\vy_u,\vz)>0\right\}\right)}}{-q(u)}\ge
    I(\vy_0),
    \end{align}
     and
    \begin{align}\label{eq:Uniform_LDP_UB}
    \limsup_{u\to\infty}\frac{\log{\mathbb{P}_{\vxi}\left(\left\{\vz:g(u^\gamma\vy_u,\vz)>0\right\}\right)}}{-q(u)}\leq
    I(\vy_0).
    \end{align}
\end{proposition}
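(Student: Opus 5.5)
The plan is to reduce both bounds to the primitive large deviation principle of Lemma~\ref{Lem:primitive_LDP}, using the rescaled constraint functions
\begin{align*}
f_u(\vw)\coloneqq \frac{g(u^\gamma\vy_u,u\vw)}{u^\rho},\qquad \vw\in\supportset .
\end{align*}
Because $\supportset$ is a cone, $g(u^\gamma\vy_u,\vz)>0$ holds if and only if $\vz/u\in\mathcal{L}_{>0}(f_u)$. Hence the violation probability equals $\mathbb{P}_{\vxi}(\{\vz:\vz/u\in\mathcal{L}_{>0}(f_u)\})$.

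\textbf{Step 1: continuous convergence.} I would first show that $f_u$ converges continuously to $g^*(\vy_0,\cdot)$ on $\supportset$. Take $\vw_u\to\vw$ with $\vw_u\in\supportset$, and set $\vx_u\coloneqq u^\gamma\vy_u\in\mathcal{X}$ and $\vz_u\coloneqq u\vw_u\in\supportset$. Then $\vx_u/u^\gamma=\vy_u\to\vy_0$ and $\vz_u/u\to\vw$, so \ref{Eq:Conti-Convergence} gives $f_u(\vw_u)\to g^*(\vy_0,\vw)$. Lemma~\ref{Lem:conticonv=localuniform} then gives uniform convergence on compacts, and Lemma~\ref{lem: Levelset Containment} becomes available. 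Since the density vanishes off $\supportset$, I would intersect all sets with $\supportset$ throughout.

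\textbf{Step 2: bound \eqref{eq:Uniform_LDP_LB}, i.e.\ the probability upper bound.} Fix $M,\delta>0$ and split the event into $\{\|\vz/u\|>M\}$ and $\{\vz/u\in\mathcal{L}^M_{>0}(f_u)\}$.
\begin{itemize}
\item For the first piece, apply Lemma~\ref{Lem:primitive_LDP} to the closed set $\{\|\vw\|\ge M\}$. Homogeneity (Lemma~\ref{lemma:positive_homo}) and positivity of $\lambda$ on the unit sphere give rate at least $M^\alpha\min_{\supportset\cap\{\|\vw\|=1\}}\lambda$, which exceeds $I(\vy_0)$ for large $M$. Finiteness of $I(\vy_0)$ follows from Lemma~\ref{Lem:UniformPostiiveRateFunction} when $\vy_0\neq\vzero$.
\item For the second piece, Lemma~\ref{lem: Levelset Containment_part_i} places it, for large $u$, inside the closed set $\mathcal{C}_{\delta}\coloneqq(\mathcal{L}_{\ge0}(g^*(\vy_0,\cdot))\cap\mathcal{B}_{M'})+\mathcal{B}_\delta$ for a suitable $M'$. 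The primitive upper bound then gives rate at least $\inf_{\mathcal{C}_\delta}\lambda$.
\end{itemize}
The limsup of a sum of two terms is governed by the worse rate. Letting $\delta\to0$, compactness and continuity of $\lambda$ give $\inf_{\mathcal{C}_\delta}\lambda\to\inf\{\lambda(\vw):g^*(\vy_0,\vw)\ge0,\ \|\vw\|\le M'\}\ge I(\vy_0)$. The degenerate case $\vy_0=\vzero$ needs a separate check. If $\rho>0$, Lemma~\ref{lem:gstar_origin_Part(i)} gives $I(\vzero)=0$, so the bound is trivial. If $\rho=0$, the same argument applies with $I(\vzero)=+\infty$ by Lemma~\ref{lem:gstar_origin_Part(ii)}.

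\textbf{Step 3: bound \eqref{eq:Uniform_LDP_UB}, i.e.\ the probability lower bound.} By Lemma~\ref{lem: Levelset Containment_part_ii}, for large $u$ the event contains $\{\vz/u\in\mathcal{O}_{\delta,M}\}$, where $\mathcal{O}_{\delta,M}\coloneqq\{\vw:\|\vw\|<M,\ g^*(\vy_0,\vw)>\delta\}$ is relatively open. The primitive lower bound then yields
\begin{align*}
\limsup_{u\to\infty}\frac{\log\mathbb{P}_{\vxi}(\cdot)}{-q(u)}\le\inf_{\mathcal{O}_{\delta,M}}\lambda .
\end{align*}
Sending $\delta\to0$ and $M\to\infty$ gives $\inf\{\lambda(\vw):g^*(\vy_0,\vw)>0\}$.

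\textbf{Main obstacle.} It remains to show that this strict-inequality infimum equals $I(\vy_0)$, whose definition uses $g^*\ge0$. In other words, every $\vw$ with $g^*(\vy_0,\vw)=0$ must be approximable by points where $g^*(\vy_0,\cdot)>0$ without increasing $\lambda$ much. I would first try the homogeneity of Lemma~\ref{Lem:conti-conv_homogeneous} together with the convexity of Lemma~\ref{lem:convexity_of_g^*}. Write $g^*(\vy_0,t\vw)=t^\rho g^*(t^{-\gamma}\vy_0,\vw)$, and use convexity along the segment between $t^{-\gamma}\vy_0$ and $\vzero$ or $\vy_0$, where $g^*(\cdot,\vw)$ has the sign information from Lemma~\ref{lem:gstar_origin}. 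The aim is to show that radially pushing $\vw$ outward ($t\downarrow1$) makes $g^*$ strictly positive, while $\lambda(t\vw)=t^\alpha\lambda(\vw)\to\lambda(\vw)$. If this argument stalls, the fallback is a set-regularity condition ensuring that the closure of $\{g^*(\vy_0,\cdot)>0\}$ equals $\{g^*(\vy_0,\cdot)\ge0\}$.
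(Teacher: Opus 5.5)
Your Steps 1--3 reproduce the paper's proof: rescaled $f_u$, continuous convergence from \ref{Eq:Conti-Convergence}, Lemma~\ref{lem: Levelset Containment}(i)/(ii), the primitive LDP on closed and open sets, truncation at radius $M$, then $\delta\to0$ and $M\to\infty$. Those steps are sound. The gap is the step you flag.

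The paper handles it in one sentence, asserting that every point of $\mathcal{L}_{\ge0}(g^*(\vy_0,\cdot))$ is a limit of points of $\mathcal{L}_{>0}(g^*(\vy_0,\cdot))$. That is false for a general continuous function (take a local maximum at level $0$), so it really does need the structure of $g^*$. Your proposal does not supply this argument either. The fallback regularity assumption would change the statement. The primary sketch, with the sign information taken at $\vzero$ from Lemma~\ref{lem:gstar_origin}, cannot work when $\gamma>0$. There $t^{-\gamma}\vy_0$ lies strictly between $\vzero$ and $\vy_0$, and convexity bounds values at interior points of a segment only from above. So nothing known at $\vzero$ can make $g^*(t^{-\gamma}\vy_0,\vw)$ positive.

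Step 3 also leaves $\vy_0=\vzero$ untreated. There the radial push is useless, since $g^*(\vzero,t\vw)=t^\rho g^*(\vzero,\vw)$. This case is needed in Case~(ib1) of the proof of Theorem~\ref{thm:LT_feasible}.

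The negative anchor must instead come from \ref{eq:away_from_0} via homogeneity, on the far side of $\vy_0$.

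\textbf{Case $\vy_0\neq\vzero$.} Fix $\vw$ with $g^*(\vy_0,\vw)=0$ and set $\phi(r)\coloneqq g^*(r\vy_0,\vw)$ for $r>0$.
\begin{itemize}
\item By Lemma~\ref{lem:convexity_of_g^*}, $\phi$ is convex.
\item By Lemma~\ref{Lem:conti-conv_homogeneous}, $\phi(r)=r^{\rho/\gamma}g^*(\vy_0,r^{-1/\gamma}\vw)$. Since $g^*(\vy_0,\vzero)<0$, this is negative for all large $r$ if $\gamma>0$, and for all small $r$ if $\gamma<0$.
\item Since $\phi(1)=0$, convexity forces $\phi>0$ strictly on the other side of $1$. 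That is exactly where $t^{-\gamma}$ lies when $t>1$.
\end{itemize}
Hence $g^*(\vy_0,t\vw)=t^\rho\phi(t^{-\gamma})>0$ for every $t>1$. Meanwhile $t\vw\in\supportset$ and $\lambda(t\vw)=t^\alpha\lambda(\vw)\to\lambda(\vw)$ as $t\downarrow1$. So the strict and non-strict infima coincide and equal $I(\vy_0)$.

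\textbf{Case $\vy_0=\vzero$.} If $\rho=0$, then $I(\vzero)=+\infty$ and \eqref{eq:Uniform_LDP_UB} is vacuous.

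If $\rho>0$, then $\gamma>0$ is forced. Indeed, for $\gamma<0$ the same identity would give $g^*(r\vy,\vw)\to-\infty$ as $r\downarrow0$, contradicting continuity of $g^*$ at $(\vzero,\vw)$. Now:
\begin{itemize}
\item Take $\vy\neq\vzero$ and $\vw_1$ with $g^*(\vy,\vw_1)>0$, using \ref{Assum:NonEmptyHorizon} and \ref{eq:nonempty_set_g^*}.
\item The convex map $r\mapsto g^*(r\vy,\vw_1)$ is positive at $r=1$ and negative for large $r$. Hence it exceeds $g^*(\vy,\vw_1)$ on $(0,1)$, and continuity gives $g^*(\vzero,\vw_1)>0$.
\item The points $t\vw_1$ with $t\downarrow0$ then satisfy $g^*(\vzero,t\vw_1)=t^\rho g^*(\vzero,\vw_1)>0$ and $\lambda(t\vw_1)\to0=I(\vzero)$.
\end{itemize}
This gives \eqref{eq:Uniform_LDP_UB} at $\vy_0=\vzero$.
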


\begin{proof}
From Lemma~\ref{Lem:primitive_LDP},
for every closed set $\mathcal{C}\subseteq\mathbb{R}^m$,
    \begin{align}\label{eq:primitiveLDP_closed}
    \limsup_{u\to\infty}\frac{\log\mathbb{P}_{\vxi}\left(\left\{\vz:{\vz}/{u}\in\mathcal{C}\right\}\right)}{q(u)}\le-\inf_{\vw\in\mathcal{C}}\lambda(\vw),
    \end{align}
    and for every open set $\mathcal{O}\subseteq\mathbb{R}^m$,
    \begin{align}\label{eq:primitiveLDP_open} \liminf_{u\to\infty}\frac{\log\mathbb{P}_{\vxi}\left(\left\{\vz:{\vz}/{u}\in\mathcal{O}\right\}\right)}{q(u)}\ge-\inf_{\vw\in\mathcal{O}}\lambda(\vw).
    \end{align}

We define
\begin{align}
g_{u}(\vw) \coloneqq \frac{g(u^\gamma \vy_u, u\vw)}{u^\rho}\quad\text{and}\quad 
g_{0}^*(\vw) \coloneqq g^*(\vy_0,\vw).
\end{align}
Let $\{\vw_u\}\subset\supportset$ be an arbitrary convergent sequence.
Since $\supportset$ is closed, there exists some $\vw_0\in\supportset$ such that $\lim_{u\to\infty}\vw_u=\vw_0$.
Setting $\vx_u\coloneqq u^\gamma\vy_u$ and $\vz_u=u\vw_u$ guarantee
\begin{align}
    \lim_{u\to\infty}\frac{\vx_u}{u^\gamma}=\lim_{u\to\infty}\vy_u=\vy_0
    \quad
    \text{and}
    \quad
    \lim_{u\to\infty}\frac{\vz_u}{u}=\lim_{u\to\infty}\vw_u=\vw_0.
\end{align}
Thus, by Assumption~\ref{Eq:Conti-Convergence}, we have
\begin{align}
    \lim_{u\to\infty}g_u(\vw_u)=\lim_{u\to\infty}\frac{g(u^\gamma\vy_u,u\vw_u)}{u^\rho}=\lim_{u\to\infty}\frac{g(\vx_u,\vz_u)}{u^\rho}
    &=g^*(\vy_0,\vw_0)\\
    &=g_0^*(\vw_0).
\end{align}
Since $\{\vw_u\}$ was chosen arbitrarily, the sequence $\{g_u\}$ converges continuously to $g^*_0$.

Now, we analyze the asymptotic behavior of 
$\mathbb{P}_{\vxi}(\{\vz:g(u^\gamma \vy_u,\vz)>0\})$.
\begin{align}
\mathbb{P}_{\vxi}(\{\vz:g(u^\gamma \vy_u,\vz)>0\}) 
&= \mathbb{P}_{\vxi}\left(\left\{\vz:\frac{g\left(u^\gamma \vy_u, u\,\frac{\vz}{u}\right)}{u^\rho}>0\right\}\right) \\
&= \mathbb{P}_{\vxi}\left(\{\vz:g_{u}\left({\vz}/{u}\right)>0\}\right) \\
&= \mathbb{P}_{\vxi}(\{\vz:{\vz}/{u}\in \mathcal{L}_{>0}(g_u)\}).
\end{align}
\medskip
\noindent
\textbf{Lower bound}. For any $M>0$, we decompose the event by splitting $\mathbb{R}^m$ into the ball $\mathcal{B}_M$ and its complement. Using $\mathcal{A}\subseteq (\mathcal{A}\cap \mathcal{B}_M)\cup \overline{\mathcal{B}_M^c}$ for any set $\mathcal{A}$, the union bound gives
\begin{equation}
\begin{aligned}
    \mathbb{P}_{\vxi}\left(\left\{\vz:{\vz}/{u}\in \mathcal{L}_{>0}(g_u) \right\}\right)
    &\leq 
    \mathbb{P}_{\vxi}\left(\left\{\vz:{\vz}/{u}\in \mathcal{L}^M_{>0}(g_u)\right\}\right)\\
    &\quad +
    \mathbb{P}_{\vxi}\left(\left\{\vz:{\vz}/{u}\in \overline{B_M^c}\right\}\right).
\end{aligned}    
\end{equation}
By Lemma~\ref{lem: Levelset Containment_part_i} (applied with $a=0$ and the continuous convergence of $\{g_u\}$ to $g_0^*$), for any $\delta>0$ and all $u$ sufficiently large, $\mathcal{L}^M_{>0}(g_u)\subseteq \mathcal{L}_{\ge 0}(g_0^*)+\mathcal{B}_\delta$. Substituting this containment into the first term above and using $\log(A+B)\le\max\{\log 2A,\log 2B\}$ for $A,B>0$, we obtain
\begin{align*}
&\log\left(\mathbb{P}_{\vxi}\left(\left\{\vz:{\vz}/{u}\in \mathcal{L}^M_{>0}(g_u)\right\}\right)+\mathbb{P}_{\vxi}\left(\left\{\vz:{\vz}/{u}\in \overline{\mathcal{B}_M^c}\right\}\right)\right)
\leq\\ &\max\left\{
\log2\mathbb{P}_{\vxi}\left(\left\{\vz:{\vz}/{u}\in \mathcal{L}_{\ge 0}\left(g_{0}^*\right)+\mathcal{B}_{\delta}\right\}\right),
\log2\mathbb{P}_{\vxi}\left(\left\{\vz:{\vz}/{u}\in \overline{\mathcal{B}_M^c}\right\}\right)
\right\}.
\end{align*}
Note that $\mathcal{L}_{\ge0}\left(g_0^*\right)+\mathcal{B}_\delta$ is closed, being the Minkowski sum of the closed set $\mathcal{L}_{\ge 0}(g_0^*)$ (a superlevel set of the continuous function $g_0^*$) and the compact set $\mathcal{B}_\delta$.
Similarly, $\overline{\mathcal{B}_M^c}$ is closed.
Thus, applying the primitive \ac{ldp} upper bound~\eqref{eq:primitiveLDP_closed} to each term, dividing by $q(u)$, and noting $\log 2/q(u)\to 0$, gives
\begin{align*}
&\limsup_{u\to\infty}\frac{\log{\mathbb{P}_{\vxi}\left(\left\{\vz:{\vz}/{u}\in \mathcal{L}_{>0}(g_u)\right\}\right)}}{q(u)} \\&\leq
\max\left\{
\limsup_{u\to\infty}\frac{\log{\mathbb{P}_{\vxi}\left(\left\{\vz:{\vz}/{u}\in \mathcal{L}_{\ge0}(g_0^*)+\mathcal{B}_\delta\right\}\right)}}{q(u)},\right.\\
&\quad\left.\limsup_{u\to\infty}\frac{\log{\mathbb{P}_{\vxi}\left(\left\{\vz:{\vz}/{u}\in \overline{\mathcal{B}_M^c}\right\}\right)}}{q(u)}\right\}\\
&\leq -\min\left\{
\inf_{\vw\in \mathcal{L}_{\ge0}(g_0^*)+\mathcal{B}_\delta}\lambda(\vw),
\inf_{\|\vw\|\geq M}\lambda(\vw)
\right\}.
\end{align*}
Note that $\lambda(\vw)\to\infty$ as $\|\vw\|\to\infty$ 
since $\lambda$ is positively homogeneous of degree $\alpha>0$ (Lemma~\ref{lemma:positive_homo}) and strictly positive on the unit sphere (Assumption~\ref{assum:light-tail}).
Since $\lambda$ is coercive, $\inf_{\|\vw\|\ge M}\lambda(\vw)\to\infty$ as $M\to\infty$, so the second term in the minimum becomes negligible.
For the first term, taking $\delta\to 0$ yields $\inf_{\vw\in \mathcal{L}_{\ge0}(g_0^*)+\mathcal{B}_\delta}\lambda(\vw)\to\inf_{\vw\in \mathcal{L}_{\ge0}(g_0^*)}\lambda(\vw)$ by continuity of $\lambda$.
Therefore, taking $M\to\infty$ and $\delta\to 0$, we obtain
\begin{equation}
\begin{aligned}
\limsup_{u\to\infty}\frac{\log{\mathbb{P}_{\vxi}\left(\left\{\vz:{\vz}/{u}\in \mathcal{L}_{>0}(g_u)\right\}\right)}}{q(u)}&\leq
-\inf_{\vw\in \mathcal{L}_{\ge0}(g_0^*)}\lambda(\vw)\\
&= -I(\vy_0).
\end{aligned}    
\end{equation}
Multiplying both sides by $-1$ and applying the identity $-\limsup_{u\to\infty} a_u = \liminf_{u\to\infty} (-a_u)$, we conclude that
\begin{align}
\liminf_{u\to\infty}\frac{\log{\mathbb{P}_{\vxi}\left(\left\{\vz:g(u^\gamma\vy_u,\vz)>0\right\}\right)}}{-q(u)} \geq I(\vy_0).
\end{align}

\medskip
\noindent
\textbf{Upper bound}. 
For any $\delta>0$ and $M>0$, the strict superlevel set $\mathcal{L}_{>\delta}(g_0^*)=\{\vw:g_0^*(\vw)>\delta\}$ is open since it is the preimage of the open set $(\delta,\infty)$ under the continuous function $g_0^*$.
Its intersection with the open ball $(\mathcal{B}_M)^\circ$ is therefore also open.
Applying the primitive \ac{ldp} lower bound~\eqref{eq:primitiveLDP_open} yields
\begin{align*}   \liminf_{u\to\infty}\frac{\log\mathbb{P}_{\vxi}\left(\left\{\vz:{\vz}/{u}\in \mathcal{L}_{>\delta}(g_0^*)\cap(\mathcal{B}_M)^\circ\right\}\right)}{q(u)}\ge -\inf_{\vw\in \mathcal{L}_{>\delta}(g_0^*)\cap(\mathcal{B}_M)^\circ}\lambda(\vw).
\end{align*}
We now establish the chain of inclusions $\mathcal{L}_{>\delta}(g_0^*)\cap(\mathcal{B}_M)^\circ\subseteq\mathcal{L}^M_{>\delta}(g_0^*)\subseteq\mathcal{L}^M_{>0}(g_u)\subseteq\mathcal{L}_{>0}(g_u)$:
the first inclusion holds because $(\mathcal{B}_M)^\circ\subseteq\mathcal{B}_M$;
the second follows from Lemma~\ref{lem: Levelset Containment_part_ii} (applied with $a=0$, so that the $a+\delta$ superlevel set of $g_0^*$ is contained in the $a$ superlevel set of $g_u$ for large $u$);
the third is immediate since the restricted superlevel set is contained in the unrestricted one.
By monotonicity of probability, we therefore have
\begin{align*}
\liminf_{u\to\infty}\frac{\log\mathbb{P}_{\vxi}\left(\left\{\vz:{\vz}/{u}\in \mathcal{L}_{>0}(g_u)\right\}\right)}{q(u)}\ge -\inf_{\vw\in \mathcal{L}_{>\delta}(g_0^*)\cap(\mathcal{B}_M)^\circ}\lambda(\vw).
\end{align*}
As $M\to\infty$, the constraint $\vw\in(\mathcal{B}_M)^\circ$ becomes vacuous, and as $\delta\to 0$, the strict superlevel set $\mathcal{L}_{>\delta}(g_0^*)$ exhausts $\mathcal{L}_{>0}(g_0^*)$. Therefore,
\begin{equation}
\begin{aligned}
\liminf_{u\to\infty}\frac{\log\mathbb{P}_{\vxi}\left(\left\{\vz:{\vz}/{u}\in \mathcal{L}_{>0}(g_u)\right\}\right)}{q(u)}
&\ge -\inf_{\vw\in \mathcal{L}_{>0}(g_0^*)}\lambda(\vw)\\
&=-\inf_{\vw\in \mathcal{L}_{\ge0}(g_0^*)}\lambda(\vw)\\
&= -I(\vy_0).
\end{aligned}    
\end{equation}
The second equality holds because the infimum of the continuous function $\lambda$ over $\mathcal{L}_{>0}(g_0^*)$ equals that over its closure $\mathcal{L}_{\ge0}(g_0^*)$; indeed, for any $\vw\in\mathcal{L}_{\ge0}(g_0^*)$, there exist $\vw_k\in\mathcal{L}_{>0}(g_0^*)$ with $\vw_k\to\vw$, and continuity of $\lambda$ gives $\lambda(\vw_k)\to\lambda(\vw)$.
The last equality follows from the definition of $I(\vy_0)$.
Multiplying both sides by $-1$ and applying the identity $-\liminf_{u\to\infty} a_u = \limsup_{u\to\infty} (-a_u)$ yields
\begin{align}
\limsup_{u\to\infty}\frac{\log\mathbb{P}_{\vxi}\left(\left\{\vz:g(u^\gamma\vy_u,\vz)>0\right\}\right)}{-q(u)} \leq I(\vy_0).
\end{align}
\end{proof}

\subsection{Proof of Theorem~\ref{thm:LT_feasible}}\label{proof_of_main_result} 

\begin{proof}
Let $u_{\varepsilon}\coloneqq q^{\leftarrow}(\log{1}/{\varepsilon})$, where $q^{\leftarrow}(t) \coloneqq \inf\{u : q(u) \ge t\}$ is the generalized inverse of $q$.
Since $q\in\RV(\alpha)$ with $\alpha>0$, \citet[Theorem 1.5.12]{bingham1989regular} guarantees that $q(u_{\varepsilon}) \sim \log(1/\varepsilon)$ as $\varepsilon \to 0$, where $f(\varepsilon) \sim g(\varepsilon)$ denotes $\lim_{\varepsilon \to 0} f(\varepsilon)/g(\varepsilon) = 1$.
We define
\begin{align}
\vy_\varepsilon\coloneqq \frac{\vx_\varepsilon}{ (u_{\varepsilon})^\gamma}.
\end{align}
The rescaled sequence $\{\vy_\varepsilon\}$ captures the growth of $\vx_\varepsilon$ relative to the natural scale $u_\varepsilon^\gamma$, enabling us to apply the large deviation principles developed in Section~\ref{secA1}.

We divide the proof into two exhaustive cases:
\begin{enumerate}[label=(\roman*),leftmargin=*]
    \item The sequence $\{\vy_\varepsilon\}$ is bounded;
    \item The sequence $\{\vy_\varepsilon\}$ is unbounded.
\end{enumerate}
\medskip 
\noindent
\textbf{Case (i)}.
Boundedness implies that the sequence $\{\vy_{\varepsilon}\}$ is contained in a compact set. 
We separate this case into two subcases:
\begin{enumerate}[label=(\alph*),leftmargin=*]
    \item The sequence is eventually bounded away from the origin \\i.e., $\liminf_{\varepsilon\to 0}\|\vy_\varepsilon\|>0$;
    \item The sequence admits a subsequence converging to the origin \\i.e., $\liminf_{\varepsilon\to 0}\|\vy_\varepsilon\|=0$.
\end{enumerate}

\medskip 
\noindent
\textbf{Case (ia)}.
Since $\{\vy_\varepsilon\}$ is eventually bounded away from the origin, any limit point $\vy_0$ satisfies $\vy_0\neq\vzero$.
By the definition of a limit point, there exists a sequence $\{\varepsilon_k\} \subset (0,1)$ converging to $0$ as $k\to\infty$ such that the corresponding subsequence $\{\vy_{\varepsilon_k}\}$ converges to $\vy_0$.
Moreover, it follows that $\vy_0\in\mathcal{X}^\infty_\gamma$ by Definition~\ref{def:asymptoset}. 
Indeed, the sequence $\vx_{\varepsilon_k}\in\mathcal{X}$ paired with $u_{\varepsilon_k}\to\infty$ satisfies $\vx_{\varepsilon_k}/u_{\varepsilon_k}^\gamma=\vy_{\varepsilon_k}\to\vy_0$, which witnesses the membership.
Hence, the set of limit points of $\{\vy_\varepsilon\}$ forms a nonempty compact subset $\mathcal{K}\subset\mathcal{X}^\infty_\gamma\setminus\{\vzero\}$.

We now analyze the asymptotic behavior of the constraint violation probability.
Define
\begin{align}
    I_{u}(\vy)\coloneqq\frac{\log\mathbb{P}_{\vxi}(\{\vz:g(u^\gamma \vy,\vz)>0\})}{-q(u)},
\end{align}
which is well-defined for any $\vy$ satisfying $u^\gamma\vy\in\mathcal{X}$.
In particular, $I_{u_\varepsilon}(\vy_\varepsilon)$ is well-defined because $u_\varepsilon^\gamma\vy_\varepsilon=\vx_\varepsilon\in\mathcal{X}$.
With this notation, we can write
\begin{align}
    \frac{\log V(\vx_\varepsilon)}{\log\varepsilon}
    =\frac{\log\mathbb{P}_{\vxi}(\{\vz:g(u_\varepsilon^\gamma \vy_\varepsilon,\vz)>0\})}{-\log(1/\varepsilon)}
    =\frac{-q(u_\varepsilon)}{-\log(1/\varepsilon)}\cdot I_{u_\varepsilon}(\vy_\varepsilon).
\end{align}
Since $q(u_\varepsilon) \sim \log(1/\varepsilon)$ by the definition of $u_\varepsilon$,
taking the limit inferior as $\varepsilon \to 0$ on both sides simplifies the asymptotic relationship to
\begin{align}\label{eq:log_violation_rewrite}
    \liminf_{\varepsilon \to 0} \frac{\log V(\vx_\varepsilon)}{\log\varepsilon} = \liminf_{\varepsilon \to 0} I_{u_\varepsilon}(\vy_\varepsilon).
\end{align}
For any convergent subsequence $\{\vy_{\varepsilon_k}\}\to \vy_0$, by Proposition~\ref{Prop:LDP}, it follows that
\begin{align}\label{eq:case_ia_LDP_bounds}
    \liminf_{k\to\infty}I_{u_{\varepsilon_k}}(\vy_{\varepsilon_k})\ge I(\vy_0).
\end{align}
Since this holds for every convergent subsequence of the bounded sequence $\{\vy_\varepsilon\}$, we obtain
\begin{align}
\liminf_{\varepsilon\to 0}I_{u_\varepsilon}(\vy_\varepsilon)
\ge \inf_{\vy\in\mathcal{K}}I(\vy).
\end{align}
Together with~\eqref{eq:log_violation_rewrite}, we obtain
\begin{align}\label{eq:log_violation_prob}
\liminf_{\varepsilon\to 0}\frac{\log V(\vx_\varepsilon)}{\log\varepsilon}
\ge \inf_{\vy\in\mathcal{K}}I(\vy).
\end{align}

We show $\inf_{\vy\in\mathcal{K}}I(\vy)\ge 1$ by establishing a lower bound $I(\vy)\ge 1$ for every $\vy\in\mathcal{K}$.
Choose any convergent subsequence $\{\vy_{\varepsilon_k}\}\to\vy_0$ of $\{\vy_\varepsilon\}$.
Then, Proposition~\ref{Prop:LDP} gives
\begin{align}\label{eq:case_ia_proposition_gives}
I(\vy_0)\ge
\limsup_{k\to\infty}\frac{\log\mathbb{P}_{\vxi}\left(\left\{\vz:g\left(\left({u_{\varepsilon_k}}/{s}\right)^\gamma \vy_{\varepsilon_k},\vz\right)>0\right\}\right)}{-q(u_{\varepsilon_k}/s)}
\end{align}
Since $\vx_{\varepsilon_k}$ solves~\eqref{eq:s-sp}, Theorem~\ref{Thm:ScenarioApproach} guarantees that, with probability $1-\beta$,
\begin{align}
\mathbb{P}_{\vxi}\left(\left\{\vz:g\left(\left({u_{\varepsilon_k}}/{s}\right)^\gamma \vy_{\varepsilon_k},\vz\right)>0\right\}\right)\leq \varepsilon_k^{s^{-\alpha}}.
\end{align}
Taking logarithm and multiplying by -1 on both sides yields
\begin{align}
    -\log\mathbb{P}_{\vxi}\left(\left\{\vz:g\left(\left({u_{\varepsilon_k}}/{s}\right)^\gamma \vy_{\varepsilon_k},\vz\right)>0\right\}\right)
    \ge
    s^{-\alpha}\log(1/\varepsilon_k).
\end{align}
Combining with~\eqref{eq:case_ia_proposition_gives}, we have
\begin{align}
    I(\vy_0)\ge
    \limsup_{k\to\infty}\frac{s^{-\alpha}\log(1/\varepsilon_k)}{q(u_{\varepsilon_k}/s)}
    =\limsup_{k\to\infty}\frac{s^{-\alpha}\log(1/\varepsilon_k)}{q(u_{\varepsilon_k}/s)}\cdot \frac{q(u_{\varepsilon_k})}{q(u_{\varepsilon_k})}
    =1,
\end{align}
where the last equality follows since $q(u_{\varepsilon_k})\sim\log(1/\varepsilon_k)$ and $q(u_{\varepsilon_k}/s)\sim s^{-\alpha}q(u_{\varepsilon_k})$ due to the regular variation property $q\in\RV(\alpha)$. 
Since $\vy_0\in\mathcal{K}$ was arbitrary, it follows that
\begin{align}
    \inf_{\vy\in\mathcal{K}}I(\vy)\ge1,
\end{align}
which proves the claimed result.

\medskip 
\noindent
\textbf{Case (ib)}.
Any subsequence of $\{\vy_\varepsilon\}$ that does not converge to $\vzero$ is covered by Case~(ia). Thus, it suffices to consider the case where the sequence converges to the origin.
By passing to a subsequence if necessary, we assume that $\lim_{\varepsilon \to 0} \vy_\varepsilon = \vzero$.
When $\vy_\varepsilon\to\vzero$, the rate function $I$ is evaluated at the origin, where its value depends on whether $\rho>0$ or $\rho=0$ since these reflect qualitatively different asymptotic behavior of $g^*$ at $\vzero$. We analyze the asymptotic behavior based on the value of $\rho$:
\begin{enumerate}[label=(\arabic*),leftmargin=*]
    \item $\rho>0$;
    \item $\rho=0$.
\end{enumerate}

\medskip 
\noindent
\textbf{Case (ib1)}.
We show by contradiction that this case cannot occur.
Since $\vx_{\varepsilon}$ solves~\eqref{eq:s-sp}, Theorem~\ref{Thm:ScenarioApproach} guarantees that, with probability at least $1-\beta$, the violation probability decays at least as fast as $\varepsilon^{s^{-\alpha}}$. Noting that $\vx_\varepsilon/s^\gamma=u_\varepsilon^\gamma\vy_\varepsilon/s^\gamma$, this translates to
\begin{align}
\liminf_{\varepsilon\to 0}\frac{\log\mathbb{P}_{\vxi}\left(\left\{\vz:g\left(u_\varepsilon^\gamma\vy_\varepsilon/s^\gamma,\vz\right)>0\right\}\right)}{-\log(1/\varepsilon)}\ge {s^{-\alpha}}>0.
\end{align}
Since $q(u_\varepsilon)\sim\log(1/\varepsilon)$, it follows
\begin{align}\label{eq:Feasibility_Case_ib1}
\liminf_{\varepsilon\to 0}\frac{\log\mathbb{P}_{\vxi}\left(\left\{\vz:g\left(u_\varepsilon^\gamma\vy_\varepsilon/s^\gamma,\vz\right)>0\right\}\right)}{-q(u_\varepsilon)}\ge {s^{-\alpha}}>0.
\end{align}
This bound says the log-probability vanishes at rate at least $s^{-\alpha}$ relative to $-q(u_\varepsilon)$.

On the other hand, we derive a conflicting upper bound. Since the sequence $\{\vy_\varepsilon/s^\gamma\}$ converges to $\vzero\in\mathcal{X}^\infty_\gamma$, the upper bound~\eqref{eq:Uniform_LDP_UB} of Proposition~\ref{Prop:LDP} yields
\begin{equation}
\begin{aligned}
\limsup_{\varepsilon\to 0}\frac{\log\mathbb{P}_{\vxi}\left(\left\{\vz:g\left(u_\varepsilon^\gamma\vy_\varepsilon/s^\gamma,\vz\right)>0\right\}\right)}{-q(u_\varepsilon)}&\le I(\vzero)\\
&=\inf_{\vw:g^*(\vzero,\vw)\ge 0}\lambda(\vw).
\end{aligned}
\end{equation}
For $\rho>0$, Lemma~\ref{lem:gstar_origin_Part(i)} implies $g^*(\vzero,\vzero)=0$, so $\vzero$ belongs to the feasible set $\{\vw:g^*(\vzero,\vw)\ge 0\}$.
Furthermore, since $\lambda(\vzero)=0$ by Lemma~\ref{Lemma:lambda_zero} and $\lambda$ is nonnegative, the infimum is attained at $\vzero$, giving $I(\vzero)=0$.
This means the log-probability does not decay at all relative to $-q(u_\varepsilon)$, which contradicts~\eqref{eq:Feasibility_Case_ib1}.

\medskip 
\noindent
\textbf{Case (ib2)}.
In contrast to the previous subcase, when $\rho=0$ the constraint is ``infinitely safe'' at the origin.
By Lemma~\ref{lem:gstar_origin_Part(ii)}, we have $g^*(\vzero,\vw)<0$ for all $\vw\in\supportset$.
Consequently, the set $\{\vw:g^*(\vzero,\vw)\ge0\}$ is empty.
Adopting the convention that the infimum over an empty set is $+\infty$, we have $I(\vzero) = \infty$, meaning the violation probability decays super-exponentially at the origin.
Applying the lower bound~\eqref{eq:Uniform_LDP_LB} of Proposition~\ref{Prop:LDP} to the sequence $\{\vy_{\varepsilon}\}\to\vzero$ yields:
\begin{equation}\label{eq:Feasibility_Case_ib2}
\begin{aligned}
\liminf_{\varepsilon\to 0}\frac{\log\mathbb{P}_{\vxi}\left(\left\{\vz:g\left(u_\varepsilon^\gamma\vy_\varepsilon,\vz\right)>0\right\}\right)}{-q(u_\varepsilon)}&\ge I(\vzero)=+\infty.
\end{aligned}
\end{equation}
Given that $q(u_\varepsilon)\sim\log(1/\varepsilon)$, we obtain
\begin{equation}
\begin{aligned}
\liminf_{\varepsilon\to 0}\frac{\log V(\vx_{\varepsilon})}{\log\varepsilon}
&=
\liminf_{\varepsilon\to 0}\frac{\log\mathbb{P}_{\vxi}\left(\left\{\vz\!:\!g\left(u_\varepsilon^\gamma\vy_\varepsilon,\vz\right)\!>\!0\right\}\right)}{-\log(1/\varepsilon)}\\
&=\liminf_{\varepsilon\to 0}\frac{\log\mathbb{P}_{\vxi}\left(\left\{\vz\!:\!g\left(u_\varepsilon^\gamma\vy_\varepsilon,\vz\right)\!>\!0\right\}\right)}{-q(u_\varepsilon)}\\
&\ge+\infty.
\end{aligned}
\end{equation}
This completes the proof for the bounded case.

\medskip 
\noindent
\textbf{Case (ii)}.
Since any bounded subsequence of $\{\vy_\varepsilon\}$ is covered by Case~(i), we assume without loss of generality that $\|\vy_{\varepsilon}\|\to\infty$ as $\varepsilon\to 0$.
This means $\vx_\varepsilon$ grows faster than the characteristic scale $u_\varepsilon^\gamma$.
Define the normalized vectors $\vd_\varepsilon\coloneqq{\vy_{\varepsilon}}/{\|\vy_{\varepsilon}\|}$, which capture the asymptotic direction of $\vy_\varepsilon$ and lie on the unit sphere.

The proof considers two subcases:
\begin{enumerate}[label=(\alph*),leftmargin=*]
    \item $\gamma>0$;
    \item $\gamma<0$.
\end{enumerate}

\medskip 
\noindent
\textbf{Case (iia)}.
The key idea is to separate the magnitude $\|\vy_\varepsilon\|$ from the direction $\vd_\varepsilon$ by writing the decision vector as $\vx_\varepsilon = u_\varepsilon^\gamma \vy_\varepsilon = (u_\varepsilon \|\vy_\varepsilon\|^{1/\gamma})^\gamma \vd_\varepsilon$. This representation identifies $\tilde{u}_\varepsilon\coloneqq u_\varepsilon\|\vy_\varepsilon\|^{1/\gamma}$ as the effective scale, enabling us to apply the large deviation machinery with scale $\tilde{u}_\varepsilon$ and direction $\vd_\varepsilon$.
We analyze the asymptotic decay of the violation probability by factoring the ratio through this effective scale:
\begin{align*}
\liminf_{\varepsilon\to 0}\frac{\log V(\vx_{\varepsilon})}{\log\varepsilon}
&=
\liminf_{\varepsilon\to 0}\frac{\log\mathbb{P}_{\vxi}(\{\vz\!:\!g((\tilde{u}_\varepsilon)^\gamma \vd_\varepsilon,\vz)>0\})}{-\log(1/\varepsilon)}\\
&= \liminf_{\varepsilon\to 0}\frac{q(u_\varepsilon)}{\log(1/\varepsilon)}\cdot
\frac{q(\tilde{u}_\varepsilon)}{q(u_\varepsilon)}
\cdot\frac{\log\mathbb{P}_{\vxi}(\{\vz\!:\!g((\tilde{u}_\varepsilon)^\gamma \vd_\varepsilon,\vz)>0\})}{-q(\tilde{u}_\varepsilon)}.
\end{align*}

We evaluate the limit inferior of each factored term as $\varepsilon \to 0$.
The first term converges to 1 since $q(u_{\varepsilon}) \sim \log(1/\varepsilon)$ by the definition of $u_\varepsilon$.
For the remaining terms, observe that
$\|\vy_{\varepsilon}\|^{{1}/{\gamma}}\to\infty$ because $\|\vy_{\varepsilon}\| \to \infty$ and $\gamma>0$, so the effective scale $\tilde{u}_\varepsilon=u_\varepsilon\|\vy_\varepsilon\|^{1/\gamma}\to\infty$.
Since $q\in\RV(\alpha)$ with $\alpha>0$ and the multiplier $\|\vy_\varepsilon\|^{1/\gamma}\to\infty$, the ratio $q(\tilde{u}_\varepsilon)/q(u_\varepsilon)\to+\infty$ by the definition of regular variation.

Finally, we show the LDP ratio in the last term is strictly positive.
By the definition of limit inferior, there exists a subsequence $\{\vd_{\varepsilon_k}\}$ of $\{\vd_\varepsilon\}$ such that
\begin{align*}
\liminf_{\varepsilon\to 0}\frac{\log\mathbb{P}_{\vxi}(\{\vz\!:\!g((\tilde{u}_\varepsilon)^\gamma \vd_\varepsilon,\vz)\!>0\})}{-q(\tilde{u}_\varepsilon)}
=
\lim_{k\to \infty}\frac{\log\mathbb{P}_{\vxi}(\{\vz\!:\!g((\tilde{u}_{\varepsilon_k})^\gamma \vd_{\varepsilon_k},\vz)\!>0\})}{-q(\tilde{u}_{\varepsilon_k})}.
\end{align*}
Furthermore, since $\{\vd_{\varepsilon_k}\}$ lies on a compact unit sphere, there exists a further subsequence $\{\vd_{\varepsilon_{k_\ell}}\}$ of $\{\vd_{\varepsilon_k}\}$ converging to some limit point $\vd_0$.
This implies, by Proposition~\ref{Prop:LDP},
\begin{equation}
\begin{aligned}
&\liminf_{\varepsilon\to 0}\frac{\log\mathbb{P}_{\vxi}(\{\vz\!:\!g((\tilde{u}_\varepsilon)^\gamma \vd_\varepsilon,\vz)>0\})}{-q(\tilde{u}_\varepsilon)}\\
=&
\lim_{\ell\to \infty}\frac{\log\mathbb{P}_{\vxi}(\{\vz\!:\!g((\tilde{u}_{\varepsilon_{k_\ell}})^\gamma \vd_{\varepsilon_{k_\ell}},\vz)>0\})}{-q(\tilde{u}_{\varepsilon_{k_\ell}})}\\
\ge&
\liminf_{\ell\to \infty}\frac{\log\mathbb{P}_{\vxi}(\{\vz\!:\!g((\tilde{u}_{\varepsilon_{k_\ell}})^\gamma \vd_{\varepsilon_{k_\ell}},\vz)>0\})}{-q(\tilde{u}_{\varepsilon_{k_\ell}})}
\ge I(\vd_0).
\end{aligned}   
\end{equation}
By Lemma~\ref{Lem:UniformPostiiveRateFunction}, $I(\vd_0)>0$ holds since $\vd_0$ is a non-zero element lying on the unit sphere of $\mathcal{X}^\infty_\gamma$.

Consequently, we have a product of three terms where the first converges to $1$, the second diverges to $+\infty$, and the limit inferior of the third is strictly positive.
Given all three terms are nonnegative, we can apply the super-multiplicativity of the limit inferior to obtain
\begin{equation}
\begin{aligned}
\liminf_{\varepsilon\to 0}\frac{\log V(\vx_{\varepsilon})}{\log\varepsilon} \ge& \liminf_{\varepsilon\to 0}\frac{q(u_\varepsilon)}{\log(1/\varepsilon)} \cdot \liminf_{\varepsilon\to 0}\frac{q(\tilde{u}_\varepsilon)}{q(u_\varepsilon)} \\
&\cdot 
\liminf_{\varepsilon\to 0}\frac{\log\mathbb{P}_{\vxi}(\{\vz : g((\tilde{u}_\varepsilon)^\gamma \vd_\varepsilon, \vz) > 0\})}{-q(\tilde{u}_\varepsilon)}\\
=&+\infty,
\end{aligned}    
\end{equation}
establishing the feasibility guarantee.

\medskip 
\noindent
\textbf{Case (iib)}.
We show by contradiction that this case cannot occur.
Since $u_\varepsilon\|\vy_{\varepsilon}\|^{{1}/{\gamma}}\ge0$, by passing to a subsequence if necessary,
we may assume that $\lim_{\varepsilon\to0}u_\varepsilon\|\vy_{\varepsilon}\|^{{1}/{\gamma}}$ exists in $[0,+\infty]$.
Thus, we consider the following three exhaustive subcases:
\begin{enumerate}[label=(\arabic*),leftmargin=*]
    \item $\lim_{\varepsilon\to0}u_\varepsilon\|\vy_{\varepsilon}\|^{{1}/{\gamma}}=+\infty$;
    \item $\lim_{\varepsilon\to0}u_\varepsilon\|\vy_{\varepsilon}\|^{{1}/{\gamma}}=L\in(0,+\infty)$;
    \item $\lim_{\varepsilon\to0}u_\varepsilon\|\vy_{\varepsilon}\|^{{1}/{\gamma}}=0$.
\end{enumerate}

\medskip 
\noindent
\textbf{Case (iib1)}.
We show by contradiction that the violation probability decays too slowly to be consistent with the feasibility guarantee of the scenario approach.
Since $\vx_{\varepsilon}$ solves~\eqref{eq:s-sp}, Theorem~\ref{Thm:ScenarioApproach} yields
\begin{align}\label{Eq:case2a_greaterthan_salpha}
  \limsup_{\varepsilon\to0}\frac{\log V(\vx_{\varepsilon})}{\log\varepsilon}
\geq s^{-\alpha}>0.
\end{align}
We derive an upper bound on the left-hand side that contradicts this inequality.
As in Case~(iia), we use the effective scale $\tilde{u}_\varepsilon = u_\varepsilon\|\vy_\varepsilon\|^{1/\gamma}$ and write $\vx_\varepsilon = (\tilde{u}_\varepsilon)^\gamma \vd_\varepsilon$, so that the limit superior of the ratio $\log V(\vx_\varepsilon)/\log\varepsilon$ can be decomposed as follows:
\begin{align*}
\limsup_{\varepsilon\to 0}\frac{\log V(\vx_\varepsilon)}{\log\varepsilon}
&= \limsup_{\varepsilon\to 0}\frac{\log\mathbb{P}_{\vxi}(\{\vz\!:\!g((\tilde{u}_\varepsilon)^\gamma \vd_\varepsilon,\vz)>0\})}{-\log(1/\varepsilon)}\\
&= \limsup_{\varepsilon\to 0}  \frac{q(u_\varepsilon)}{\log(1/\varepsilon)}\cdot
\frac{q(\tilde{u}_\varepsilon)}{q(u_\varepsilon)}
\cdot \frac{\log\mathbb{P}_{\vxi}\left(\left\{\vz\!:g\left((\tilde{u}_\varepsilon)^\gamma \vd_\varepsilon,\vz\right)>0\right\}\right)}{-q(\tilde{u}_\varepsilon)}.
\end{align*}

Next, we analyze the asymptotic behavior of each factor in turn.
The first term converges to 1.
For the second term, since $\gamma<0$ and $\|\vy_\varepsilon\|\to\infty$, the multiplier $\|\vy_\varepsilon\|^{1/\gamma}\to 0$.
Since $q\in\RV(\alpha)$ with $\alpha>0$, applying Definition~\ref{def:RV} yields $q(\tilde{u}_\varepsilon)/q(u_\varepsilon)\to 0$.

Regarding the third term, we establish that its limit superior is bounded above by a finite constant.
By the definition of limit superior, there exists a subsequence $\{\vd_{\varepsilon_k}\}$ of $\{\vd_\varepsilon\}$ such that
\begin{align*}
\limsup_{\varepsilon\to 0}\frac{\log\mathbb{P}_{\vxi}(\{\vz\!:\!g((\tilde{u}_\varepsilon)^\gamma \vd_\varepsilon,\vz)\!>\!0\})}{-q(\tilde{u}_\varepsilon)}
=\!
\lim_{k\to \infty}\frac{\log\mathbb{P}_{\vxi}(\{\vz\!:\!g((\tilde{u}_{\varepsilon_k})^\gamma \vd_{\varepsilon_k},\vz)\!>\!0\})}{-q(\tilde{u}_{\varepsilon_k})}.
\end{align*}
Because the normalized vectors $\{\vd_{\varepsilon_k}\}$ reside on a compact unit sphere, we can extract a further subsequence $\{\vd_{\varepsilon_{k_\ell}}\}$ of $\{\vd_{\varepsilon_k}\}$ converging to some limit point $\vd_0$.
Applying Proposition~\ref{Prop:LDP} to this sequence yields
\begin{equation}
\begin{aligned}
&\limsup_{\varepsilon\to 0}\frac{\log\mathbb{P}_{\vxi}(\{\vz\!:\!g((\tilde{u}_\varepsilon)^\gamma \vd_\varepsilon,\vz)>0\})}{-q(\tilde{u}_\varepsilon)}\\
=&
\lim_{\ell\to \infty}\frac{\log\mathbb{P}_{\vxi}(\{\vz\!:\!g((\tilde{u}_{\varepsilon_{k_\ell}})^\gamma \vd_{\varepsilon_{k_\ell}},\vz)>0\})}{-q(\tilde{u}_{\varepsilon_{k_\ell}})}\\
\le&
\limsup_{\ell\to \infty}\frac{\log\mathbb{P}_{\vxi}(\{\vz\!:\!g((\tilde{u}_{\varepsilon_{k_\ell}})^\gamma \vd_{\varepsilon_{k_\ell}},\vz)>0\})}{-q(\tilde{u}_{\varepsilon_{k_\ell}})}
\le I(\vd_0).
\end{aligned}    
\end{equation}
Lemma~\ref{Lem:UniformPostiiveRateFunction} guarantees that $I(\vd_0)$ is bounded above by a finite constant, as $\vd_0$ is a non-zero element on the unit sphere of $\mathcal{X}^\infty_\gamma$.

We have thus decomposed the original ratio into a product of three terms:
the first component converges to $1$, the second converges to $0$, and 
the limit superior of the last is bounded above by a constant.
Because all three factors are nonnegative, we apply the sub-multiplicativity of the limit superior to obtain
\begin{equation}
\begin{aligned}
\limsup_{\varepsilon\to0}\frac{\log V(\vx_{\varepsilon})}{\log\varepsilon}\le& \limsup_{\varepsilon\to 0}\frac{q(u_\varepsilon)}{\log(1/\varepsilon)} \cdot \limsup_{\varepsilon\to 0}\frac{q(\tilde{u}_\varepsilon)}{q(u_\varepsilon)} \\
&\cdot 
\limsup_{\varepsilon\to 0}\frac{\log\mathbb{P}_{\vxi}(\{\vz : g((\tilde{u}_\varepsilon)^\gamma \vd_\varepsilon, \vz) > 0\})}{-q(\tilde{u}_\varepsilon)}\\
=&0,
\end{aligned}    
\end{equation}
which contradicts~\eqref{Eq:case2a_greaterthan_salpha}.

\medskip 
\noindent
\textbf{Case (iib2)}.
Since the sequence of normalized vectors $\{\vd_\varepsilon\}$ lies on the compact unit sphere, by passing to a further subsequence if necessary, we assume that $\lim_{\varepsilon\to 0} \vd_\varepsilon = \bar{\vd}$ for some unit vector $\bar{\vd}$.
Recalling the effective scale $\tilde{u}_\varepsilon = u_\varepsilon\|\vy_\varepsilon\|^{1/\gamma}\to L$ from the hypothesis of this subcase, we have $\vx_\varepsilon = (\tilde{u}_\varepsilon)^\gamma \vd_\varepsilon \to L^\gamma \bar{\vd}$.
Since $\mathcal{X}$ is closed, $L^\gamma\bar{\vd}\in\mathcal{X}$.
The strategy is to leverage the scenario feasibility guarantee to show that $g$ is nonpositive along a limiting direction, then extend this via convexity and the asymptotic homogeneity~\ref{Eq:Conti-Convergence} to contradict Assumption~\ref{eq:nonempty_set_g^*}.

We define the limiting violation set
\begin{align*}
\mathcal{V}\coloneqq\left\{\vz:g\left(\left({L}/{s}\right)^\gamma \bar{\vd},\vz\right)>0\right\}.
\end{align*}
Consider an arbitrary point $\vz\in\mathcal{V}$.
Since $(\tilde{u}_\varepsilon/s)^\gamma \vd_\varepsilon \to (L/s)^\gamma \bar{\vd}$ and $g(\cdot,\vz)$ is closed (i.e., lower semicontinuous) for each fixed $\vz$, we have
\begin{align*}
\liminf_{\varepsilon\to 0} g\left( \left(\frac{\tilde{u}_\varepsilon}{s}\right)^\gamma \vd_\varepsilon, \vz \right) \ge g\left( (L/s)^\gamma \bar{\vd}, \vz \right) > 0. \end{align*}
This implies that for all sufficiently small $\varepsilon$, $\vz$ belongs to the event
\begin{align*}
\mathcal{V}_\varepsilon\coloneqq\left\{\vz:g\left(\left(\frac{\tilde{u}_\varepsilon}{s}\right)^\gamma \vd_\varepsilon,\vz\right)>0\right\}.
\end{align*}
Consequently, we have the inclusion $\mathcal{V} \subseteq \liminf_{\varepsilon\to 0} \mathcal{V}_\varepsilon$.\footnote{The limit inferior of a sequence of subsets $\{\mathcal{A}_u\}$ is defined as
$\liminf_{u\to\infty}\mathcal{A}_u\coloneqq\left\{\vz\in\mathbb{R}^m: \liminf_{u\to\infty}\mathbf{1}_{\mathcal{A}_u}(\vz)=1\right\}$.
Here, for a given set $\mathcal{A}\subset\mathbb{R}^m$,  $\mathbf{1}_{\mathcal{A}}(\vz)$ is an indicator function which equals 1 if $\vz\in \mathcal{A}$, and 0 otherwise.}
Invoking Fatou's lemma, we obtain
\begin{align}\label{eq:fatou}
\mathbb{P}_{\vxi}(\mathcal{V})\leq\liminf_{\varepsilon\to 0}\mathbb{P}_{\vxi}(\mathcal{V}_\varepsilon).
\end{align}

On the other hand, observing that $(\tilde{u}_\varepsilon/s)^\gamma \vd_\varepsilon = (\tilde{u}_\varepsilon)^\gamma \vd_\varepsilon / s^\gamma = \vx_\varepsilon/s^\gamma$, we recognize $\mathcal{V}_\varepsilon = \{\vz : g(\vx_\varepsilon/s^\gamma, \vz) > 0\}$ as the violation event of the scaled solution.
Since $\vx_\varepsilon$ solves~\eqref{eq:s-sp} with $N \ge \mathsf{N}(\varepsilon^{s^{-\alpha}}, \beta)$ samples, Theorem~\ref{Thm:ScenarioApproach} ensures that $\mathbb{P}_{\vxi}(\mathcal{V}_\varepsilon)\le\varepsilon^{s^{-\alpha}}$.
Taking the limit as $\varepsilon \to 0$ in~\eqref{eq:fatou}, it follows that $\mathbb{P}_{\vxi}(\mathcal{V}) = 0$.
Since $g((L/s)^\gamma \bar{\vd}, \cdot)$ is continuous, the set $\mathcal{V}$ is open (as the preimage of $(0,\infty)$ under a continuous function).
An open subset of the support $\supportset$ that carries zero probability must be empty, so $\mathcal{V}\cap\supportset = \emptyset$, which gives
\begin{align*}
    g((L/s)^\gamma \bar{\vd}, \vz)\le0,\quad\forall\vz\in\supportset.
\end{align*}

Let $\hat{\vy}\coloneqq(L/s)^\gamma\bar{\vd}$.
We now extend the nonpositivity of $g$ from the fixed point $\hat{\vy}$ to all points $\{u^\gamma \hat{\vy}: u\ge 1\}$, which trace a path from $\hat{\vy}$ toward $\vzero$ (as $u\to\infty$, since $\gamma<0$).
Since $\gamma<0$, Assumption~\ref{assum:trivial_safety_at_0} guarantees that $g(\vzero,\vz)\le 0$ for all $\vz\in\supportset$.
For any $u \geq 1$, $u^\gamma \in (0, 1]$ since $\gamma<0$, so $u^\gamma \hat{\vy} = u^\gamma \hat{\vy} + (1-u^\gamma)\vzero$ is a convex combination of $\hat{\vy}$ and $\vzero$.
Because $g(\cdot, \vz)$ is convex,
\begin{align*}
g(u^\gamma \hat{\vy}, \vz) &\leq u^\gamma g(\hat{\vy}, \vz) + (1-u^\gamma) g(\vzero, \vz) \\
&\leq \max\{g(\hat{\vy}, \vz), g(\vzero, \vz)\} \\
&\leq 0, \quad \forall \vz \in \supportset.
\end{align*}

Finally, fix any $\vw \in \supportset$.
Setting $\vx_u \coloneqq u^\gamma \hat{\vy}$, we have $\vx_u/u^\gamma = \hat{\vy}$ and, for all $u \ge s$, $\vx_u = (uL/s)^\gamma\bar{\vd}$ lies on the segment $[\vzero, L^\gamma\bar{\vd}]\subset\mathcal{X}$ (by closedness and convexity), so $\vx_u\in\mathcal{X}$.
By Assumption~\ref{Eq:Conti-Convergence}, for any sequence $\{\vz_u\}\subset\supportset$ such that $\lim_{u\to\infty}\vz_u/u=\vw$, we obtain
\begin{align}
g^*\left(\hat{\vy},\vw\right)=\lim_{u\to\infty}\frac{g\left(u^\gamma\hat{\vy},\vz_u\right)}{u^\rho}\le 0.
\end{align}
The inequality holds because $g(u^\gamma\hat{\vy},\vz_u)\le 0$ for all sufficiently large $u$ (since $\vz_u\in\supportset$) while $u^\rho > 0$.
Since this holds for all $\vw \in \supportset$, we conclude that $\{\vw \in \supportset : g^*(\hat{\vy}, \vw) > 0\} = \emptyset$.
Moreover, $\hat{\vy}\in\mathcal{X}^\infty_\gamma\setminus\{\vzero\}$: the sequence $\vx_u = u^\gamma\hat{\vy}\in\mathcal{X}$ with $u\to\infty$ satisfies $\vx_u/u^\gamma=\hat{\vy}$, witnessing membership via Definition~\ref{def:asymptoset}, and $\hat{\vy}\neq\vzero$ since $L>0$.
This contradicts Assumption~\ref{eq:nonempty_set_g^*}.

\medskip 
\noindent
\textbf{Case (iib3)}.
Unlike Case~(iib2), where $\tilde{u}_\varepsilon\to L>0$ ensures that $\vx_\varepsilon=(\tilde{u}_\varepsilon)^\gamma\vd_\varepsilon$ converges to the finite limit $L^\gamma\bar{\vd}\in\mathcal{X}$, here $\tilde{u}_\varepsilon\to 0$ with $\gamma<0$ forces $\|\vx_\varepsilon\|=(\tilde{u}_\varepsilon)^\gamma\to\infty$, so there is no finite limiting point.
We instead exploit the divergence of $\|\vx_\varepsilon\|$ directly.

Recall from Case~(ii) that $\vd_\varepsilon=\vy_\varepsilon/\|\vy_\varepsilon\|$.
Since $\vx_\varepsilon=u_\varepsilon^\gamma\vy_\varepsilon$ with $u_\varepsilon^\gamma>0$, we have $\vx_\varepsilon/\|\vx_\varepsilon\|=\vd_\varepsilon$.
By passing to a further subsequence if necessary as in Case~(iib2), we may assume that $\vd_\varepsilon\to\bar{\vd}$ for some unit vector $\bar{\vd}$.

Fix any $t>0$.
Since $\gamma<0$ and $s\ge 1$, the scaled solution satisfies $\|\vx_\varepsilon/s^\gamma\|=\|\vx_\varepsilon\|\cdot s^{-\gamma}\to\infty$, so $\|\vx_\varepsilon/s^\gamma\|>t$ for all sufficiently small $\varepsilon>0$.
Noting that $\vx_\varepsilon/s^\gamma$ has the same direction as $\vx_\varepsilon$, namely $\vd_\varepsilon$, we write $t\vd_\varepsilon$ as a convex combination of $\vzero$ and $\vx_\varepsilon/s^\gamma$:
\begin{align*}
t\vd_\varepsilon = \left(1-\frac{t}{\|\vx_\varepsilon/s^\gamma\|}\right)\vzero + \frac{t}{\|\vx_\varepsilon/s^\gamma\|}\cdot\frac{\vx_\varepsilon}{s^\gamma}.
\end{align*}
By the convexity of $g(\cdot,\vz)$ and Assumption~\ref{assum:trivial_safety_at_0} (which gives $g(\vzero,\vz)\le 0$), we obtain
\begin{equation}
\begin{aligned}
    g(t\vd_\varepsilon, \vz)
    &\le
    \left(1 - \frac{t}{\|\vx_\varepsilon/s^\gamma\|}\right)g(\vzero, \vz) + \frac{t}{\|\vx_\varepsilon/s^\gamma\|}g\!\left(\frac{\vx_\varepsilon}{s^\gamma}, \vz\right) \\
    &\le \frac{t}{\|\vx_\varepsilon/s^\gamma\|}g\!\left(\frac{\vx_\varepsilon}{s^\gamma}, \vz\right).
\end{aligned}
\end{equation}
Since $t/\|\vx_\varepsilon/s^\gamma\|>0$, this inequality implies the set inclusion
\begin{align}\label{eq:caseiib3_set_inclusion}
    \left\{\vz:g(t\vd_\varepsilon,\vz)>0\right\}\subseteq \left\{\vz:g\!\left(\frac{\vx_\varepsilon}{s^\gamma},\vz\right)>0\right\}.
\end{align}

We now show $g(t\bar{\vd},\vz)\le 0$ for all $\vz\in\supportset$, proceeding analogously to Case~(iib2).
Consider an arbitrary $\vz\in\supportset$ with $g(t\bar{\vd},\vz)>0$.
Since $\vd_\varepsilon\to\bar{\vd}$ and $g(\cdot,\vz)$ is closed (i.e., lower semicontinuous),
\begin{align*}
\liminf_{\varepsilon\to 0} g(t\vd_\varepsilon,\vz) \ge g(t\bar{\vd},\vz)>0.
\end{align*}
Hence $g(t\vd_\varepsilon,\vz)>0$ for all sufficiently small $\varepsilon$, so it follows $\vz\in\liminf_{\varepsilon\to 0}\{\vz':g(t\vd_\varepsilon,\vz')>0\}$.
Invoking Fatou's lemma and the set inclusion~\eqref{eq:caseiib3_set_inclusion}:
\begin{equation}
\begin{aligned}\label{eq:caseiib3_inequality}
    \mathbb{P}_{\vxi}\!\left(\left\{\vz:g(t\bar{\vd},\vz)>0\right\}\right)
    &\le
    \liminf_{\varepsilon\to 0}\mathbb{P}_{\vxi}\!\left(\left\{\vz:g(t\vd_\varepsilon,\vz)>0\right\}\right)\\
    &\le \liminf_{\varepsilon\to 0}\mathbb{P}_{\vxi}\!\left(\left\{\vz:g\!\left(\frac{\vx_\varepsilon}{s^\gamma},\vz\right)>0\right\}\right)\\
    &\le \liminf_{\varepsilon\to 0}\varepsilon^{s^{-\alpha}} = 0.
\end{aligned}
\end{equation}
The first inequality combines the lower semicontinuity of $g(\cdot,\vz)$ and Fatou's lemma (exactly as in Case~(iib2)), the second uses the set inclusion~\eqref{eq:caseiib3_set_inclusion}, and the third follows from Theorem~\ref{Thm:ScenarioApproach} since $\vx_\varepsilon$ solves~\eqref{eq:s-sp} with $N\ge\mathsf{N}(\varepsilon^{s^{-\alpha}},\beta)$ samples.
Since $g(t\bar{\vd},\cdot)$ is continuous, the set $\{\vz:g(t\bar{\vd},\vz)>0\}$ is open; as before, an open subset of $\supportset$ with zero probability must be empty, so
\begin{align} g(t\bar{\vd},\vz)\le0,\quad\forall\vz\in\supportset.
\end{align}
As $t>0$ was chosen arbitrarily, this nonpositivity holds for all $t > 0$.

Finally, we derive the desired contradiction via Assumption~\ref{Eq:Conti-Convergence}, following the same structure as Case~(iib2).
Fix any $\vw\in\supportset$ and choose a sequence $\{\vz_u\}\subset\supportset$ with $\lim_{u\to\infty}\vz_u/u=\vw$.
Setting $\vx_u\coloneqq u^\gamma\bar{\vd}$, we have $\vx_u/u^\gamma=\bar{\vd}$, so the sequences $\{\vx_u\}$ and $\{\vz_u\}$ satisfy~\eqref{eq:relation_x_and_y} with limit $(\bar{\vd},\vw)$.
Moreover, $\vx_u\in\mathcal{X}$ for all $u>0$: for any $c>0$ and $\|\vx_\varepsilon\|>c$, the point $c\vd_\varepsilon=(c/\|\vx_\varepsilon\|)\vx_\varepsilon+(1-c/\|\vx_\varepsilon\|)\vzero$ is a convex combination of $\vx_\varepsilon\in\mathcal{X}$ and $\vzero\in\mathcal{X}$, so $c\vd_\varepsilon\in\mathcal{X}$; taking $\varepsilon\to 0$ and using the closedness of $\mathcal{X}$ gives $c\bar{\vd}\in\mathcal{X}$ for all $c>0$.
Assumption~\ref{Eq:Conti-Convergence} then yields
\begin{align}
  g^*(\bar{\vd},\vw) = \lim_{u\to\infty}\frac{g(u^\gamma\bar{\vd},\vz_u)}{u^\rho}\le 0,
\end{align}
where the inequality holds because $g(u^\gamma\bar{\vd},\vz_u)\le 0$ for all $u>0$.
Since this holds for all $\vw\in\supportset$, $\{\vw\in\supportset:g^*(\bar{\vd},\vw)>0\}=\emptyset$.
Moreover, $\bar{\vd}\in\mathcal{X}^\infty_\gamma\setminus\{\vzero\}$: to see this, note that the sequence $\vx_u=u^\gamma\bar{\vd}\in\mathcal{X}$ with $\lambda_u=u\to\infty$ satisfies $\vx_u/\lambda_u^\gamma=\bar{\vd}$, witnessing membership in $\mathcal{X}^\infty_\gamma$ via Definition~\ref{def:asymptoset}, and $\bar{\vd}\neq\vzero$ since it is a unit vector.
This contradicts Assumption~\ref{eq:nonempty_set_g^*}.
\end{proof}

\subsection{Proofs of Section~\ref{Sec:VerifyingConditions}}
\label{Sec:Proof_SufficientCondition}
\begin{proof}[Proof of Proposition~\ref{prop:SufficientConditionsOfContiConv}]
Let $\vy \in \mathcal{X}_{\gamma}^{\infty}$ and $\vw \in \supportset$ be arbitrary. 
Consider any pair of sequences $\{\vx_u\} \subset \mathcal{X}$ and $\{\vz_u\} \subset \supportset$ satisfying~\eqref{eq:relation_x_and_y}.
We introduce the sequences $\vy_u := \vx_u / u^\gamma$ and $\vw_u := \vz_u / u$. 
By construction, these sequences converge to $\vy$ and $\vw$, respectively, as $u \to \infty$.
Substituting $\vx_u = u^\gamma \vy_u$ and $\vz_u = u \vw_u$ into the expression for $g(x_u, z_u)$ given in~\eqref{eq:algebraic_function} for~\eqref{eq:direct_conti_conv}, we obtain
\begin{align}
    \frac{g(\vx_u,\vz_u)}{u^\rho}=\sum_{(\va,\vb)\in\mathcal{J}}C_{\va,\vb}u^{p_{\va,\vb}(\gamma)-\rho}\vy_u^{\va}\vw_u^{\vb},
\end{align}
where $p_{\va,\vb}(\gamma)$ is the exponent of each term, defined as~\eqref{eq:exponent_function}.

Algorithm~\ref{alg:find_scaling_exponents} determines $\rho$ as the maximum scaling exponent, i.e., $\rho = \max_{(\va,\vb) \in \mathcal{J}} p_{\va,\vb}(\gamma)$; see line~\ref{alg:rho_is_max} in Algorithm~\ref{alg:find_scaling_exponents}.
Consequently, for every index $(\va,\vb) \in \mathcal{J}$, the exponent of $u$ satisfies $p_{\va,\vb}(\gamma) - \rho \le 0$.
We partition the index set $\mathcal{J}$ into the active set $\mathcal{J}^* := \{(\va,\vb) \in \mathcal{J} : p_{\va,\vb}(\gamma) = \rho\}$ and the inactive set $\mathcal{J} \setminus \mathcal{J}^*$.
The limit in~\eqref{eq:direct_conti_conv} then can be analyzed as follows:
\begin{equation}
\begin{aligned}
    \lim_{u \to \infty} \frac{g(\vx_u, \vz_u)}{u^\rho} =& 
    \sum_{(\va,\vb) \in \mathcal{J}^*} C_{\va,\vb} \lim_{u \to \infty} (\vy_u^{\va} \vw_u^{\vb}) 
    \\
    &+\sum_{(\va,\vb) \in \mathcal{J} \setminus \mathcal{J}^*} C_{\va,\vb} \lim_{u \to \infty} (u^{p_{\va,\vb}(\gamma) - \rho} \vy_u^{\va} \vw_u^{\vb}).
\end{aligned}    
\end{equation}
Since power functions are continuous, and $\vy_u \to \vy, \vw_u \to \vw$, it follows that $\lim_{u \to \infty} \vy_u^{\va} \vw_u^{\vb} = \vy^{\va} \vw^{\vb}$.
For terms in $\mathcal{J} \setminus \mathcal{J}^*$, $u^{p_{\va,\vb}(\gamma)-\rho}\to 0$ as $u\to\infty$.
Therefore, we have
\begin{align}
    \lim_{u \to \infty} \frac{g(\vx_u, \vz_u)}{u^\rho} = \sum_{(\va,\vb) \in \mathcal{J}^*} C_{\va,\vb} \vy^{\va} \vw^{\vb} = g^*(\vy, \vw).
\end{align}
This confirms that the convergence in~\eqref{eq:direct_conti_conv}  holds for any sequences satisfying~\eqref{eq:relation_x_and_y}, thereby verifying condition~\ref{Eq:Conti-Convergence}.
Finally, we note that Algorithm~\ref{alg:find_scaling_exponents} explicitly verifies conditions~\labelcref{Assum:NonEmptyHorizon,eq:away_from_0,eq:nonempty_set_g^*,assum:trivial_safety_at_0} before returning the tuple $(\gamma, \rho, g^*)$.
Since~\ref{Eq:Conti-Convergence} is established by the argument above, Assumption~\ref{assum:conti-conv} is satisfied.
\end{proof}

\begin{proof}[Proof of Proposition~\ref{Lem:AlgebraicalEquivalence}]
We demonstrate that $g(\vx,\vz)$ satisfies all conditions of Assumption~\ref{assum:conti-conv} with parameters $(\gamma,\rho+\rho_h)$
and the limit function $g^*(\vy,\vw)=f^*(\vy,\vw)h^*(\vy,\vw)$ for all $\vy\in\mathcal{X}^\infty_\gamma$ and $\vw\in\supportset$. 

\medskip
\noindent
\textbf{\ref{Eq:Conti-Convergence}}.
Fix arbitrary $\vy\in\mathcal{X}^\infty_\gamma$ and $\vw\in\supportset$.
Consider any sequences $\{\vx_u\}$ and $\{\vz_u\}$ satisfying
$\lim_{u\to\infty}\vx_u/u^\gamma=\vy$ and
$\lim_{u\to\infty}\vz_u/u=\vw$, respectively.
Given that both $f$ and $h$ satisfy condition~\ref{Eq:Conti-Convergence}
with parameters $(\gamma,\rho)$ and $(\gamma,\rho_h)$, respectively,
we obtain
\begin{equation}
\begin{aligned}
\lim_{u\to\infty}\frac{g(\vx_u,\vz_u)}{u^{\rho+\rho_h}}&=\lim_{u\to\infty}
\frac{f(\vx_u,\vz_u)}{u^\rho}\cdot
\frac{h(\vx_u,\vz_u)}{u^{\rho_h}}\\
&=\lim_{u\to\infty}
\frac{f(\vx_u,\vz_u)}{u^\rho}\cdot\lim_{u\to\infty}\frac{h(\vx_u,\vz_u)}{u^{\rho_h}}\\
&=f^*(\vy,\vw)\cdot h^*(\vy,\vw)\\
&=g^*(\vy,\vw).    
\end{aligned}
\end{equation}
Moreover, continuity of $f^*$ and $h^*$ on $\mathcal{X}^\infty_\gamma\times\supportset$ implies that their product $g^*$ is continuous as well.

\medskip
\noindent
\textbf{\ref{Assum:NonEmptyHorizon}}.
As $f$ and $g$ are defined on the same domain $\mathcal{X}$ and share the same $\gamma$, these conditions are satisfied by the hypothesis that $f$ satisfies Assumption~\ref{assum:conti-conv}.

\medskip
\noindent
\textbf{\ref{eq:away_from_0}}. 
Fix any $\vy\in\mathcal{X}^\infty_\gamma\setminus\{\vzero\}$.
Since $f^*(\vy,\vzero)<0$ by condition~\ref{eq:away_from_0} and $h^*(\vy,\vw)>0$ for all $\vw\in\supportset$, we have
\begin{align}
    g^*(\vy,\vzero)=f^*(\vy,\vzero)h^*(\vy,\vzero)<0.
\end{align}

\medskip
\noindent
\textbf{\ref{eq:nonempty_set_g^*}}
Fix any $\vy\in\mathcal{X}^\infty_\gamma\setminus\{\vzero\}$.
Given that $f$ satisfies condition~\ref{eq:nonempty_set_g^*}, the set
$\{\vw\in\supportset:f^*(\vy,\vw)>0\}$ is nonempty.
Let $\vw_0$ be an element of this set.
Strict positivity of $h^*$ ensures that
\begin{align}
    g^*(\vy,\vw_0)=h^*(\vy,\vw_0)f^*(\vy,\vw_0)>0.
\end{align}
Therefore, $\{\vw\in\supportset:g^*(\vy,\vw)>0\}$ is nonempty.

\medskip
\noindent
\textbf{\ref{assum:trivial_safety_at_0}}.
Suppose $\gamma < 0$. By hypothesis, $f$ satisfies~\ref{assum:trivial_safety_at_0}, meaning $f(\vzero, \vz) \le 0$ for all $\vz \in \Xi$. 
Since $h$ is strictly positive on its domain, we have
\begin{align}
    g(\vzero, \vz) = f(\vzero, \vz)h(\vzero, \vz) \le 0, \quad \forall \vz \in \Xi.
\end{align}
\end{proof}

\begin{proof}[Proof of Proposition~\ref{lem:joint_chance_constraint}]
    We verify that $g$ satisfies~\labelcref{Eq:Conti-Convergence,Assum:NonEmptyHorizon,eq:away_from_0,eq:nonempty_set_g^*,assum:trivial_safety_at_0}
    of Assumption~\ref{assum:conti-conv}.
    
\medskip
\noindent
\textbf{\ref{Eq:Conti-Convergence}}.
Fix arbitrary $\vy \in \mathcal{X}_\gamma^\infty$ and $\vw \in \supportset$.
Let $\{\vx_u\} \subset \mathcal{X}$ and $\{\vz_u\} \subset \supportset$ be any sequences satisfying~\eqref{eq:relation_x_and_y}.
Since the limit operator commutes with the maximum over a finite set, we have
\begin{equation}
\begin{aligned}
\lim_{u\to\infty} \frac{g(\vx_u,\vz_u)}{u^\rho} &= \lim_{u\to\infty} \max_{i=1,\ldots,K}  \frac{g_i(\vx_u,\vz_u)}{u^\rho} 
= 
\max_{i=1,\ldots,K}  \lim_{u\to\infty} \frac{g_i(\vx_u,\vz_u)}{u^\rho}.  
\end{aligned}    
\end{equation}
By the hypothesis that each $g_i$ satisfies~\ref{Eq:Conti-Convergence}, the inner limits converge to $g^*_i(\vy,\vw)$.
Thus, the limit equals $g^*(\vy,\vw)$, satisfying~\ref{Eq:Conti-Convergence}.

\medskip
\noindent
\textbf{\labelcref{Assum:NonEmptyHorizon}}.
Since the domain $\mathcal{X}$ remains unchanged, this conditions are inherited directly.

\medskip
\noindent
\textbf{\ref{eq:away_from_0}}.
By hypothesis, $g_i^*(\vy, \vzero) < 0$ for all $i=1,\ldots,K$ and any $\vy \in \mathcal{X}_\gamma^\infty \setminus \{\vzero\}$.
It follows immediately that $g^*(\vy, \vzero) = \max_{i=1,\ldots,K} g_i^*(\vy, \vzero) < 0$.

\medskip
\noindent
\textbf{\ref{eq:nonempty_set_g^*}}.
Fix any $\vy \in \mathcal{X}_\gamma^\infty \setminus \{0\}$. 
For each $i$, let $\mathcal{W}_i = \{\vw \in \supportset : g_i^*(\vy, \vw) > 0\}$. 
By hypothesis, $\mathcal{W}_i \neq \emptyset$. 
Since $g^*(\vy, \vw) \ge g_i^*(\vy, \vw)$ for all $i$, we have the inclusion $\bigcup_{i=1}^K \mathcal{W}_i \subseteq \{\vw \in \supportset : g^*(\vy, \vw) > 0\}$.
This guarantees that the latter set is nonempty.

\medskip
\noindent
\textbf{\ref{assum:trivial_safety_at_0}}.
Suppose $\gamma < 0$.
By hypothesis, each $g_i$ satisfies~\ref{assum:trivial_safety_at_0}, meaning $g_i(\mathbf{0}, \mathbf{z}) \le 0$ for all $\mathbf{z} \in \supportset$. 
Consequently, $g(\vzero,\vz)=\max_{i=1,\ldots,K}g_i(\vzero,\vz)\le 0$ for all $\vz\in \supportset$.
\end{proof}

\bibliographystyle{plainnat}
\bibliography{references}

\begin{thebibliography}{64}
\providecommand{\natexlab}[1]{#1}
\providecommand{\url}[1]{\texttt{#1}}
\expandafter\ifx\csname urlstyle\endcsname\relax
  \providecommand{\doi}[1]{doi: #1}\else
  \providecommand{\doi}{doi: \begingroup \urlstyle{rm}\Url}\fi

\bibitem[Ahmed and Shapiro(2008)]{ahmed2008solving}
Shabbir Ahmed and Alexander Shapiro.
\newblock Solving chance-constrained stochastic programs via sampling and integer programming.
\newblock In \emph{State-of-the-Art Decision-Making Tools in the Information-Intensive Age}, pages 261--269. INFORMS, 2008.

\bibitem[Aoues and Chateauneuf(2010)]{aoues2010benchmark}
Younes Aoues and Alaa Chateauneuf.
\newblock Benchmark study of numerical methods for reliability-based design optimization.
\newblock \emph{Structural and Multidisciplinary Optimization}, 41\penalty0 (2):\penalty0 277--294, 2010.

\bibitem[Bajo-Buenestado(2025)]{bajo2025iberian}
Ra\'{u}l Bajo-Buenestado.
\newblock {The Iberian Peninsula Blackout -- Causes, Consequences, and Challenges Ahead}.
\newblock \url{https://doi.org/10.25613/EC9T-QJ89}, 2025.

\bibitem[Barrera et~al.(2016)Barrera, Homem-de Mello, Moreno, Pagnoncelli, and Canessa]{barrera2016chance}
Javiera Barrera, Tito Homem-de Mello, Eduardo Moreno, Bernardo~K Pagnoncelli, and Gianpiero Canessa.
\newblock Chance-constrained problems and rare events: an importance sampling approach.
\newblock \emph{Mathematical Programming}, 157:\penalty0 153--189, 2016.

\bibitem[Beraldi et~al.(2004)Beraldi, Bruni, and Conforti]{beraldi2004designing}
Patrizia Beraldi, Maria~Elena Bruni, and Domenico Conforti.
\newblock Designing robust emergency medical service via stochastic programming.
\newblock \emph{European Journal of Operational Research}, 158\penalty0 (1):\penalty0 183--193, 2004.

\bibitem[Bienstock et~al.(2014)Bienstock, Chertkov, and Harnett]{bienstock2014chance}
Daniel Bienstock, Michael Chertkov, and Sean Harnett.
\newblock Chance-constrained optimal power flow: Risk-aware network control under uncertainty.
\newblock \emph{SIAM Review}, 56\penalty0 (3):\penalty0 461--495, 2014.

\bibitem[Bingham et~al.(1989)Bingham, Goldie, and Teugels]{bingham1989regular}
Nicholas~H Bingham, Charles~M Goldie, and Jef~L Teugels.
\newblock \emph{Regular variation}, volume~27.
\newblock Cambridge University Press, 1989.

\bibitem[Blanchet et~al.(2024{\natexlab{a}})Blanchet, Jorritsma, and Zwart]{blanchet2024optimization}
Jose Blanchet, Joost Jorritsma, and Bert Zwart.
\newblock Optimization under rare events: scaling laws for linear chance-constrained programs.
\newblock \emph{arXiv preprint arXiv:2407.11825}, 2024{\natexlab{a}}.

\bibitem[Blanchet et~al.(2024{\natexlab{b}})Blanchet, Zhang, and Zwart]{blanchet2024efficient}
Jose Blanchet, Fan Zhang, and Bert Zwart.
\newblock Efficient scenario generation for heavy-tailed chance constrained optimization.
\newblock \emph{Stochastic Systems}, 14\penalty0 (1):\penalty0 22--46, 2024{\natexlab{b}}.

\bibitem[Bonami and Lejeune(2009)]{bonami2009exact}
Pierre Bonami and Miguel~A Lejeune.
\newblock An exact solution approach for portfolio optimization problems under stochastic and integer constraints.
\newblock \emph{Operations Research}, 57\penalty0 (3):\penalty0 650--670, 2009.

\bibitem[Calafiore and Campi(2005)]{calafiore2005uncertain}
Giuseppe Calafiore and Marco~C Campi.
\newblock Uncertain convex programs: randomized solutions and confidence levels.
\newblock \emph{Mathematical Programming}, 102:\penalty0 25--46, 2005.

\bibitem[Calafiore(2010)]{calafiore2010random}
Giuseppe~Carlo Calafiore.
\newblock Random convex programs.
\newblock \emph{SIAM Journal on Optimization}, 20\penalty0 (6):\penalty0 3427--3464, 2010.

\bibitem[Calafiore and Campi(2006)]{calafiore2006scenario}
Giuseppe~Carlo Calafiore and Marco~C Campi.
\newblock The scenario approach to robust control design.
\newblock \emph{IEEE Transactions on Automatic Control}, 51\penalty0 (5):\penalty0 742--753, 2006.

\bibitem[Campi and Garatti(2008)]{campi2008exact}
Marco~C Campi and Simone Garatti.
\newblock The exact feasibility of randomized solutions of uncertain convex programs.
\newblock \emph{SIAM Journal on Optimization}, 19\penalty0 (3):\penalty0 1211--1230, 2008.

\bibitem[Campi and Garatti(2011)]{campi2011sampling}
Marco~C Campi and Simone Garatti.
\newblock A sampling-and-discarding approach to chance-constrained optimization: feasibility and optimality.
\newblock \emph{Journal of Optimization Theory and Applications}, 148\penalty0 (2):\penalty0 257--280, 2011.

\bibitem[Campi et~al.(2009)Campi, Garatti, and Prandini]{campi2009scenario}
Marco~C Campi, Simone Garatti, and Maria Prandini.
\newblock The scenario approach for systems and control design.
\newblock \emph{Annual Reviews in Control}, 33\penalty0 (2):\penalty0 149--157, 2009.

\bibitem[Car{\`e} et~al.(2014)Car{\`e}, Garatti, and Campi]{care2014fast}
Algo Car{\`e}, Simone Garatti, and Marco~C Campi.
\newblock Fast—fast algorithm for the scenario technique.
\newblock \emph{Operations Research}, 62\penalty0 (3):\penalty0 662--671, 2014.

\bibitem[Charnes et~al.(1958)Charnes, Cooper, and Symonds]{charnes1958cost}
Abraham Charnes, William~W Cooper, and Gifford~H Symonds.
\newblock Cost horizons and certainty equivalents: an approach to stochastic programming of heating oil.
\newblock \emph{Management Science}, 4\penalty0 (3):\penalty0 235--263, 1958.

\bibitem[Chiang et~al.(2017)Chiang, Tan, Palomar, O'neill, and Julian]{chiang2017power}
Mung Chiang, Chee~Wei Tan, Daniel~P Palomar, Daniel O'neill, and David Julian.
\newblock Power control by geometric programming.
\newblock \emph{IEEE Transactions on Wireless Communications}, 6\penalty0 (7):\penalty0 2640--2651, 2017.

\bibitem[Choi et~al.(2024)Choi, Deo, Lagoa, and Subramanyam]{choi2024reduced}
Jaeseok Choi, Anand Deo, Constantino Lagoa, and Anirudh Subramanyam.
\newblock Reduced sample complexity in scenario-based control system design via constraint scaling.
\newblock \emph{IEEE Control Systems Letters}, 8:\penalty0 2793--2798, 2024.

\bibitem[Chun et~al.(2016)Chun, Song, and Paulino]{chun2016structural}
Junho Chun, Junho Song, and Glaucio~H Paulino.
\newblock Structural topology optimization under constraints on instantaneous failure probability.
\newblock \emph{Structural and Multidisciplinary Optimization}, 53\penalty0 (4):\penalty0 773--799, 2016.

\bibitem[Dembo and Zeitouni(2009)]{dembo2009large}
Amir Dembo and Ofer Zeitouni.
\newblock \emph{Large Deviations Techniques and Applications}, volume~38.
\newblock Springer Science \& Business Media, 2009.

\bibitem[Deo and Murthy(2023)]{deo2023achieving}
Anand Deo and Karthyek Murthy.
\newblock Achieving efficiency in black-box simulation of distribution tails with self-structuring importance samplers.
\newblock \emph{Operations Research}, 2023.

\bibitem[Deo and Murthy(2025)]{deo2025scaling}
Anand Deo and Karthyek Murthy.
\newblock The scaling behaviors in achieving high reliability via chance-constrained optimization.
\newblock \emph{arXiv preprint arXiv:2504.07728}, 2025.

\bibitem[Duckett(2005)]{duckett2005risk}
Will Duckett.
\newblock Risk analysis and the acceptable probability of failure.
\newblock \emph{Structural Engineer}, 83\penalty0 (15), 2005.

\bibitem[Einmahl et~al.(2021)Einmahl, Yang, and Zhou]{einmahl2021testing}
John~HJ Einmahl, Fan Yang, and Chen Zhou.
\newblock Testing the multivariate regular variation model.
\newblock \emph{Journal of Business \& Economic Statistics}, 39\penalty0 (4):\penalty0 907--919, 2021.

\bibitem[Einmahl et~al.(2025)Einmahl, Krajina, and Cai]{einmahl2025empirical}
John~HJ Einmahl, Andrea Krajina, and Juan~Juan Cai.
\newblock Empirical likelihood based testing for multivariate regular variation.
\newblock \emph{The Annals of Statistics}, 53\penalty0 (1):\penalty0 352--373, 2025.

\bibitem[El{\c{c}}i and Noyan(2018)]{elcci2018chance}
{\"O}zg{\"u}n El{\c{c}}i and Nilay Noyan.
\newblock A chance-constrained two-stage stochastic programming model for humanitarian relief network design.
\newblock \emph{Transportation Research Part B: Methodological}, 108:\penalty0 55--83, 2018.

\bibitem[Fontem(2023)]{fontem2023robust}
Belleh Fontem.
\newblock Robust chance-constrained geometric programming with application to demand risk mitigation.
\newblock \emph{Journal of Optimization Theory and Applications}, 197\penalty0 (2):\penalty0 765--797, 2023.

\bibitem[Geletu et~al.(2017)Geletu, Hoffmann, Kloppel, and Li]{geletu2017inner}
Abebe Geletu, Armin Hoffmann, Michael Kloppel, and Pu~Li.
\newblock An inner-outer approximation approach to chance constrained optimization.
\newblock \emph{SIAM Journal on Optimization}, 27\penalty0 (3):\penalty0 1834--1857, 2017.

\bibitem[Hong et~al.(2011)Hong, Yang, and Zhang]{hong2011sequential}
L~Jeff Hong, Yi~Yang, and Liwei Zhang.
\newblock Sequential convex approximations to joint chance constrained programs: A monte carlo approach.
\newblock \emph{Operations Research}, 59\penalty0 (3):\penalty0 617--630, 2011.

\bibitem[Ivanov and Dolgui(2020)]{ivanov2020viability}
Dmitry Ivanov and Alexandre Dolgui.
\newblock Viability of intertwined supply networks: extending the supply chain resilience angles towards survivability. a position paper motivated by covid-19 outbreak.
\newblock \emph{International Journal of Production Research}, 58\penalty0 (10):\penalty0 2904--2915, 2020.

\bibitem[Jasour et~al.(2015)Jasour, Aybat, and Lagoa]{jasour2015semidefinite}
Ashkan~M Jasour, Necdet~S Aybat, and Constantino~M Lagoa.
\newblock Semidefinite programming for chance constrained optimization over semialgebraic sets.
\newblock \emph{SIAM Journal on Optimization}, 25\penalty0 (3):\penalty0 1411--1440, 2015.

\bibitem[Lagoa et~al.(2005)Lagoa, Li, and Sznaier]{lagoa2005probabilistically}
Constantino~M Lagoa, Xiang Li, and Mario Sznaier.
\newblock Probabilistically constrained linear programs and risk-adjusted controller design.
\newblock \emph{SIAM Journal on Optimization}, 15\penalty0 (3):\penalty0 938--951, 2005.

\bibitem[Lasserre and Weisser(2021)]{lasserre2021distributionally}
Jean~B Lasserre and Tillmann Weisser.
\newblock Distributionally robust polynomial chance-constraints under mixture ambiguity sets.
\newblock \emph{Mathematical Programming}, 185\penalty0 (1):\penalty0 409--453, 2021.

\bibitem[Lejeune and Margot(2016)]{lejeune2016solving}
Miguel~A Lejeune and Fran{\c{c}}ois Margot.
\newblock Solving chance-constrained optimization problems with stochastic quadratic inequalities.
\newblock \emph{Operations Research}, 64\penalty0 (4):\penalty0 939--957, 2016.

\bibitem[Liu et~al.(2022)Liu, Lisser, and Chen]{liu2022distributionally}
Jia Liu, Abdel Lisser, and Zhiping Chen.
\newblock Distributionally robust chance constrained geometric optimization.
\newblock \emph{Mathematics of Operations Research}, 47\penalty0 (4):\penalty0 2950--2988, 2022.

\bibitem[Luedtke and Ahmed(2008)]{luedtke2008sample}
James Luedtke and Shabbir Ahmed.
\newblock A sample approximation approach for optimization with probabilistic constraints.
\newblock \emph{SIAM Journal on Optimization}, 19\penalty0 (2):\penalty0 674--699, 2008.

\bibitem[Luedtke et~al.(2010)Luedtke, Ahmed, and Nemhauser]{luedtke2010integer}
James Luedtke, Shabbir Ahmed, and George~L Nemhauser.
\newblock An integer programming approach for linear programs with probabilistic constraints.
\newblock \emph{Mathematical Programming}, 122\penalty0 (2):\penalty0 247--272, 2010.

\bibitem[Lukashevich et~al.(2023)Lukashevich, Gorchakov, Vorobev, Deka, and Maximov]{lukashevich2023importance}
Aleksander Lukashevich, Vyacheslav Gorchakov, Petr Vorobev, Deepjyoti Deka, and Yury Maximov.
\newblock {Importance Sampling Approach to Chance-Constrained DC Optimal Power Flow}.
\newblock \emph{IEEE Transactions on Control of Network Systems}, 11\penalty0 (2):\penalty0 928--937, 2023.

\bibitem[Malevergne and Sornette(2006)]{malevergne2006extreme}
Yannick Malevergne and Didier Sornette.
\newblock \emph{Extreme financial risks: From dependence to risk management}.
\newblock Springer, 2006.

\bibitem[Milligan(2011)]{milligan2011methods}
Michael Milligan.
\newblock Methods to model and calculate capacity contributions of variable generation for resource adequacy planning (ivgtf1-2): Additional discussion (presentation).
\newblock Technical report, National Renewable Energy Lab.(NREL), Golden, CO (United States), 2011.

\bibitem[Nemirovski and Shapiro(2006)]{nemirovski2006scenario}
Arkadi Nemirovski and Alexander Shapiro.
\newblock Scenario approximations of chance constraints.
\newblock In Giuseppe Calafiore and Fabrizio Dabbene, editors, \emph{Probabilistic and Randomized Methods for Design Under Uncertainty}, pages 3--47. Springer, 2006.

\bibitem[Nemirovski and Shapiro(2007)]{nemirovski2007convex}
Arkadi Nemirovski and Alexander Shapiro.
\newblock Convex approximations of chance constrained programs.
\newblock \emph{SIAM Journal on Optimization}, 17\penalty0 (4):\penalty0 969--996, 2007.

\bibitem[Pagnoncelli et~al.(2009)Pagnoncelli, Ahmed, and Shapiro]{pagnoncelli2009sample}
Bernardo~K Pagnoncelli, Shabbir Ahmed, and Alexander Shapiro.
\newblock Sample average approximation method for chance constrained programming: theory and applications.
\newblock \emph{Journal of Optimization Theory and Applications}, 142\penalty0 (2):\penalty0 399--416, 2009.

\bibitem[Pe{\~n}a-Ordieres et~al.(2020)Pe{\~n}a-Ordieres, Luedtke, and W{\"a}chter]{pena2020solving}
Alejandra Pe{\~n}a-Ordieres, James~R Luedtke, and Andreas W{\"a}chter.
\newblock Solving chance-constrained problems via a smooth sample-based nonlinear approximation.
\newblock \emph{SIAM Journal on Optimization}, 30\penalty0 (3):\penalty0 2221--2250, 2020.

\bibitem[Qiu and Wang(2014)]{qiu2014chance}
Feng Qiu and Jianhui Wang.
\newblock Chance-constrained transmission switching with guaranteed wind power utilization.
\newblock \emph{IEEE Transactions on Power Systems}, 30\penalty0 (3):\penalty0 1270--1278, 2014.

\bibitem[Resnick(2007)]{resnick2007heavy}
Sidney~I Resnick.
\newblock \emph{Heavy-tail phenomena: probabilistic and statistical modeling}, volume~10.
\newblock Springer Science \& Business Media, 2007.

\bibitem[Rockafellar and Wets(2009)]{rockafellar2009variational}
R~Tyrrell Rockafellar and Roger J-B Wets.
\newblock \emph{Variational analysis}, volume 317.
\newblock Springer Science \& Business Media, 2009.

\bibitem[Romao et~al.(2022)Romao, Papachristodoulou, and Margellos]{romao2022exact}
Licio Romao, Antonis Papachristodoulou, and Kostas Margellos.
\newblock On the exact feasibility of convex scenario programs with discarded constraints.
\newblock \emph{IEEE Transactions on Automatic Control}, 68\penalty0 (4):\penalty0 1986--2001, 2022.

\bibitem[Rubinstein(1997)]{rubinstein1997optimization}
Reuven~Y Rubinstein.
\newblock Optimization of computer simulation models with rare events.
\newblock \emph{European Journal of Operational Research}, 99\penalty0 (1):\penalty0 89--112, 1997.

\bibitem[Rubinstein(2002)]{rubinstein2002cross}
Reuven~Y Rubinstein.
\newblock Cross-entropy and rare events for maximal cut and partition problems.
\newblock \emph{ACM Transactions on Modeling and Computer Simulation (TOMACS)}, 12\penalty0 (1):\penalty0 27--53, 2002.

\bibitem[Schildbach et~al.(2014)Schildbach, Fagiano, Frei, and Morari]{schildbach2014scenario}
Georg Schildbach, Lorenzo Fagiano, Christoph Frei, and Manfred Morari.
\newblock The scenario approach for stochastic model predictive control with bounds on closed-loop constraint violations.
\newblock \emph{Automatica}, 50\penalty0 (12):\penalty0 3009--3018, 2014.

\bibitem[Shapiro et~al.(2021)Shapiro, Dentcheva, and Ruszczynski]{shapiro2021lectures}
Alexander Shapiro, Darinka Dentcheva, and Andrzej Ruszczynski.
\newblock \emph{Lectures on stochastic programming: modeling and theory}.
\newblock SIAM, 2021.

\bibitem[Subramanyam(2023)]{subramanyam2023chance}
Anirudh Subramanyam.
\newblock {Chance-Constrained Programming: Rare Events}.
\newblock In Panos~M Pardalos and Oleg~A Prokopyev, editors, \emph{Encyclopedia of Optimization}. Springer International Publishing, 2023.

\bibitem[Tempo et~al.(2005)Tempo, Dabbene, and Calafiore]{tempo2005randomized}
Roberto Tempo, Fabrizio Dabbene, and Giuseppe Calafiore.
\newblock \emph{Randomized algorithms for analysis and control of uncertain systems}.
\newblock Springer, 2005.

\bibitem[Tong et~al.(2022)Tong, Subramanyam, and Rao]{tong2022optimization}
Shanyin Tong, Anirudh Subramanyam, and Vishwas Rao.
\newblock Optimization under rare chance constraints.
\newblock \emph{SIAM Journal on Optimization}, 32\penalty0 (2):\penalty0 930--958, 2022.

\bibitem[Valk(2016)]{de2016approximation}
Cees~de Valk.
\newblock Approximation of high quantiles from intermediate quantiles.
\newblock \emph{Extremes}, 19:\penalty0 661--686, 2016.

\bibitem[Vidyasagar(2003)]{Vidyasagar2003}
M.~Vidyasagar.
\newblock \emph{Learning and Generalisation: With Applications to Neural Networks}.
\newblock Communications and Control Engineering. Springer, 2 edition, 2003.
\newblock ISBN 9781849968676; 1849968675; 9781447137481; 1447137485.

\bibitem[Wang et~al.(2018)Wang, Fan, and Pardalos]{wang2018robust}
Ximing Wang, Neng Fan, and Panos~M Pardalos.
\newblock Robust chance-constrained support vector machines with second-order moment information.
\newblock \emph{Annals of Operations Research}, 263:\penalty0 45--68, 2018.

\bibitem[Wang et~al.(2016)Wang, Gao, and Liu]{wang2016multi}
Zhiyue Wang, Deli Gao, and Jianjun Liu.
\newblock Multi-objective sidetracking horizontal well trajectory optimization in cluster wells based on ds algorithm.
\newblock \emph{Journal of Petroleum Science and Engineering}, 147:\penalty0 771--778, 2016.

\bibitem[Xie and Ahmed(2020)]{xie2020bicriteria}
Weijun Xie and Shabbir Ahmed.
\newblock Bicriteria approximation of chance-constrained covering problems.
\newblock \emph{Operations Research}, 68\penalty0 (2):\penalty0 516--533, 2020.

\bibitem[Zhang et~al.(2023)Zhang, Gao, and Luedtke]{zhang2023new}
Zeyang Zhang, Chuanhou Gao, and James Luedtke.
\newblock New valid inequalities and formulations for the static joint chance-constrained lot-sizing problem.
\newblock \emph{Mathematical Programming}, 199\penalty0 (1):\penalty0 639--669, 2023.

\bibitem[Zimper(2014)]{zimper2014minimal}
Alexander Zimper.
\newblock The minimal confidence levels of basel capital regulation.
\newblock \emph{Journal of Banking Regulation}, 15\penalty0 (2):\penalty0 129--143, 2014.

\end{thebibliography}

\end{document}